\theoremstyle{plain}
\newtheorem{theorem}{Theorem}[section]
\newtheorem{lemma}[theorem]{Lemma}
\newtheorem{proposition}[theorem]{Proposition}
\newtheorem{corollary}[theorem]{Corollary}
\theoremstyle{definition}
\newtheorem{definition}[theorem]{Definition}
\newtheorem{example}[theorem]{Example}
\theoremstyle{remark}
\newtheorem{remark}[theorem]{Remark}
\newcommand{\N}{\mathbb{N}}
\newcommand{\R}{\mathbb{R}}
\newcommand{\C}{\mathbb{C}}
\newcommand{\E}{\mathbb{E}}
\newcommand{\lip}{\textup{Lip}}
\newcommand{\eps}{\varepsilon}
\newcommand{\bx}{\bm{x}}
\newcommand{\by}{\bm{y}}
\newcommand{\bz}{\bm{z}}
\newcommand{\bg}{\bm{g}}
\newcommand{\abs}[1]{\left\lvert #1 \right\rvert}
\newcommand{\id}{\textup{Id}}
\newcommand{\loc}{{\textup{loc}}}
\newcommand{\var}{\textup{-var}}
\newcommand{\dd}{\mathrm{d}}
\newcommand{\sdd}{\,\dd}
\DeclareMathOperator*{\std}{\textup{std}}
\DeclareMathOperator*{\med}{\textup{med}}
\newcommand{\vertiii}[1]{{\left\vert\kern-0.25ex\left\vert\kern-0.25ex\left\vert #1 
    \right\vert\kern-0.25ex\right\vert\kern-0.25ex\right\vert}}
\numberwithin{equation}{section}
\title[Adaptive algorithm]{An adaptive algorithm for rough differential equations}
\author{Christian Bayer}
\address{Weierstrass Institute, Mohrenstraße 39, 10117 Berlin, Germany}
\email{christian.bayer@wias-berlin.de}
\author{Simon Breneis}
\address{Weierstrass Institute, Mohrenstraße 39, 10117 Berlin, Germany}
\email{simon.breneis@wias-berlin.de}
\author{Terry Lyons}
\address{Mathematical Institute, University of Oxford, Andrew Wiles Building, Woodstock Road, Oxford OX2 6GG, United Kingdom}
\email{terry.lyons@maths.ox.ac.uk}
\date{\today}
\subjclass[2020]{65L50, 60L90, 65C30}
\keywords{Rough differential equations, Adaptive algorithms, Log-ODE method}
\thanks{C.B. and S.B. gratefully acknowledge the support of the DFG through the IRTG 2544. T. L. was funded in part by the EPSRC grant EP/S026347/1, by The Alan Turing Institute under the EPSRC grant EP/N510129/1, the Data Centric Engineering Programme (under the Lloyd’s Register Foundation grant G0095), the Defence and Security Programme (funded by the UK Government), the Office for National Statistics, The Alan Turing Institute (strategic partnership) and by the Hong Kong Innovation and Technology Commission (InnoHK Project CIMDA)}
\begin{document}
\maketitle

\begin{abstract}


We present an adaptive algorithm for effectively solving rough differential equations (RDEs) using the log-ODE method. The algorithm is based on an error representation formula that accurately describes the contribution of local errors to the global error. By incorporating a cost model, our algorithm efficiently determines whether to refine the time grid or increase the order of the log-ODE method. In addition, we provide several examples that demonstrate the effectiveness of our adaptive algorithm in solving RDEs.
\end{abstract}

\section{Introduction}

Rough path analysis (see \cite{friz2020course, friz2010multidimensional, lyons2007differential}) has found many important applications such as stochastic analysis, filtering, and machine learning. At its core, it provides a full solution theory for controlled differential equations of the form
\begin{equation*}
  \dd y_t = f(y_t) \sdd x_t, \quad y_0 = \xi,
\end{equation*}
when $x: [0,T] \to \R^d$ is a path of finite $p$-variation, $p \ge 1$, such as a typical trajectory of a Brownian motion (for $p > 2$).
More precisely, we can solve such equations in a pathwise way, provided we have access to a \emph{rough path} lift $\bx$ of $x$ encoding additional information about $x$ (such as iterated integrals of $x$ against itself).\footnote{Strictly speaking, $\bx$ does not need to be the lift of a path $x$, but, for instance, pure area paths (i.e., limits of lifts of highly oscillatory paths) are also covered.}
Given such a driving rough path $\bx$, we obtain continuity of the solution map $\bx \mapsto y$ of the \emph{rough differential equation} (RDE)
\begin{equation}\label{eqn:RDEEquation}
  \dd y_t = f(y_t) \sdd\bx_t, \quad y_0 = \xi,
\end{equation}
which can also be extended to a rough path lift of $y$, i.e., $\bx \mapsto \by$. (See Appendix \ref{sec:Notation} for precise definitions.)

When it comes to numerical solutions of RDEs, essentially three different methods have been proposed. The first approach consists of time-discretization of the rough differential equation based on local higher order Taylor expansions in terms of the extended ``increments'' of $\bx$ following \cite{davie2007differential}. For a truncation level $N = 2$ (as required for $2 \le p < 3$), this essentially corresponds to a Milstein approximation of a stochastic differential equation in Stratonovich sense. The method is extensively studied in \cite{friz2010multidimensional} under the name of ``Euler scheme'', which we adopt in this paper.

The second approach leverages stability of the solution map of an RDE: given a rough driving path $\bx$ and a sequence of smooth (i.e., bounded variation) paths $x^n$, $n \ge 1$, whose lifts $\bx^n$ converge to $\bx$ in the appropriate rough path metric (i.e., $p$-variation metric), approximate the solution $y$ of the RDE by the solutions
\begin{equation*}
  \dd y^n_t = f(y^n_t) \sdd x^n_t, \quad y^n_0 = \xi,
\end{equation*}
of the controlled differential equations driven by $x^n$.
Those equations fall under the classical theory, and can then be solved by a large number of standard ODE-solvers (Euler schemes, Runge--Kutta schemes, \ldots).
In the context of SDEs, Wong--Zakai approximation is an example of the second approach.

In this paper, we are following the third approach known as the \emph{log--ODE method}, which was first considered in \cite{lyons2014rough}, and further studied in \cite{boutaib2013dimension}. Before explaining the log-ODE method, let us first recall the definition of the \emph{signature} of a (rough) path $\bx$. If $\bx$ is actually a smooth path, the signature is the collection of iterated integrals, i.e.
$$S_N(\bx)_{s,t} = \left(1, \int_s^t \dd x_{\tau_1},\dots,\int_{s<\tau_1<\dots<\tau_N<t} \dd x_{\tau_1}\otimes\dots\otimes \dd x_{\tau_N}\right).$$ For a genuine $p$-rough path these iterated integrals may not be well-defined and have to be specified explicitly, c.f. Appendix \ref{sec:Notation}. Iterated integrals of degree larger than $\lfloor p \rfloor$ are then defined as rough integrals, and we obtain a definition of the truncated signature $S_N(\bx)_{s,t}$ at any level $N$.

Given a partition $0 = t_0 < \dots < t_n = T$, the log-ODE method consists in replacing the rough path $\bx$ in \eqref{eqn:RDEEquation} on every interval $[t_k,t_{k+1}]$ by a simpler rough path $\bx^k$ satisfying 
\begin{equation}\label{eqn:SignaturesMustAgree}
S_N(\bx^k)_{t_k,t_{k+1}} = S_N(\bx)_{t_k,t_{k+1}}.
\end{equation}
Then, one solves the RDE
\begin{equation}\label{eqn:NewLogODERDE2}
\dd\overline{y}_t = f(\overline{y}_t) \sdd\bx^k_t,
\end{equation}
on the interval $[t_k,t_{k+1}]$, with initial value $\overline{y}_{t_k}$ already computed before reaching the interval. In principle, one can find smooth paths $x^k$ satisfying \eqref{eqn:SignaturesMustAgree}, so that \eqref{eqn:NewLogODERDE2} effectively become ODEs. However, in practice it is easier to choose a specific rough path $\bx^k$ defined in Section \ref{sec:LogODE} such that the RDE \eqref{eqn:NewLogODERDE2} is equivalent to the ODE
\begin{align*}
\dd\overline{y}_t = \sum_{k=1}^N f^{\circ k}(\overline{y}_t) \pi_k(\log_N(S_N(\bx)_{t_k,t_{k+1}}))\sdd t,
\end{align*}
where $\log_N$ is the tensor algebra logarithm, defined in Appendix \ref{sec:Euler}. Thus, one reduces the problem of solving an RDE to that of solving a sequence of ODEs. These ODEs can then be solved my standard ODE solvers, like the Runge-Kutta method. A more precise formulation of the log-ODE method can be found in Section \ref{sec:LogODE}.

At first glance, the idea that one might solve a controlled differential equation using higher order information about the controlling signal looks optimistic. Certainly, if the original data was only monitored as a time series, then it is clear that the higher order description can only be constructed from that sampled data and so what exactly is the purpose of first making and then consuming this high order description, when compared with using the original data directly. This question was observed and answered in the original papers of Gaines and Lyons \cite{gaines1997variable}, where the benefits become immediate if the vector fields are in any sense expensive to compute.

The log-ODE method is similar to the Euler scheme described above in that one partitions the interval $[0,T]$, and then uses the signature up to some level $N$ on each of the partition intervals. And indeed, in \cite{boedihardjo2015uniform, boutaib2013dimension} it was shown that both methods achieve the same rate of convergence for fixed $N$ as the partition gets finer. The difference is that in the Euler scheme, we take a single Euler step to compute $y_{t_{k+1}}$ from $y_{t_k}$, whereas in the log-ODE method we solve a differential equation along the approximating path. The log-ODE method therefore has the advantage of respecting the geometry of the problem -- provided that a geometric ODE solver is used. For example, if the solution of the original RDE \eqref{eqn:RDEEquation} lies in a certain manifold, the approximation of the log-ODE method will also lie in that manifold, whereas the same may not be true for the approximation of the Euler scheme.

The log-ODE method has seen various applications, for example in solving neural controlled differential equations in \cite{kidger2020neural, morrill2020neural, morrill2021neural}, and learning stochastic differential equations in \cite{kidger2021neural, liao2019learning}.
Indeed, in the context of neural controlled differential equations, it is worth keeping in mind that one solves many thousands of controlled differential equations in the training phase driven by the same input paths (the data). So any efforts to compute the higher order descriptions are amortised over (potentially) millions of uses of the noise with different differential equations.
In addition, the log-ODE method is intuitively related to \emph{cubature on Wiener space}, see \cite{lyons2004cubature}.
Indeed, in the latter scheme, we consider paths $\omega_i$ with weights $\lambda_i$, $i=1, \ldots, M$, such that the \emph{expected signature}
\[
\sum_{i=1}^M \lambda_i S_N(\omega_i)_{t_k,t_{k+1}} = \E\left[ S_N(\mathbf{W})_{t_k,t_{k+1}} \right],
\]
where $\mathbf{W}$ denotes an appropriate rough path lift of Brownian motion.
Hence, cubature on Wiener space can be seen as a weak log-ODE method.

The goal of this paper is to provide and analyze an efficient, adaptive algorithm based on the log-ODE method.
For a fixed method of choosing a smooth surrogate path $\bx^k$ given an arbitrary sub-interval $[t_k, t_{k+1}]$ and degree $N = N_k$, the log-ODE method is determined by two numerical parameter sets:
\begin{itemize}
\item the grid $0 = t_0 < t_1 < \cdots < t_n =T$;
\item the degrees $N_k$ for the log-ODE method applied to the sub-interval $[t_{k}, t_{k+1}]$, $k=0, \ldots, n-1$.
\end{itemize}
In order to efficiently use the available computational resources, the choices of grid and degrees obviously need to be done in a problem-depending way.
For instance, if the driving path $\bx$ exhibits a local change of roughness at some point in time, we expect to adjust both the time-step and the degree.
Likewise, if the driving vector-field $f$ is ``singular'' in a certain part of the solution space (in the sense that the derivatives of $f$ become markedly larger in  norm compared to everywhere else), then we expect to choose finer time-steps and (possibly) larger degree whenever the solution $\overline{y}$ takes values in this region.
Hence, an efficient choice of grid and degree will, in many cases, not consist of uniform time-steps and constant degree $N_k \equiv N$.

This leaves us with the important practical question of how to choose the time-step $t_{k+1} - t_k$ and the corresponding degree $N_k$, $k=0, \ldots, N-1$.
In principle, the answer can be provided by a-priori error analysis.
For instance, if the local error is understood to be proportional to $\|\bx\|_{p\var;[t_k,t_{k+1}]}$ (or some appropriate power thereof, depending on $N_k$), then it would make sense to choose the time-step $t_{k+1} - t_k$ inversely proportionally.
However, in numerical analysis adaptive methods based on \emph{a-posteriori error estimates} are usually preferred.
Indeed, a-priori error estimates are usually very pessimistic (possibly unavoidably so for general purpose solvers), and often hard to compute.
A-posteriori error estimates, on the other hand, can be fine-tuned to properties of the specific problem at hand.
For this reason, adaptive numerical methods are state of the art for solving ODEs as well as PDEs.
We refer to \cite{eriksson1995introduction} for an introduction.

In this paper, we follow the approach of \cite{moon2003variational} for adaptive approximation of ODEs and extend it to rough differential equations.
We also refer to the related works \cite{szepessy2001adaptive,hoel2016construction} for adaptive weak and strong approximation of solutions to stochastic differential equations, respectively.
The approach is based on an intuitive error representation of the global approximation error as the sum of weighted local errors $\sum_{k=0}^{n-1} w_k e_k$ up to higher order terms, where $e_k$ denotes the local error on $[t_k, t_{k+1}]$ and $w_k$ denotes a weight.
The weights are given as solution to a backward ``dual'' problem, and are, thus, computable (in an approximate sense).
Hence, the weights can be thought of as multipliers due to error propagation.
Since the representation decomposes the total error into contribution attributable to specific sub-intervals (i.e., $w_k e_k$), optimal time steps can, in principle, be found by imposing that the error contributions for all intervals are equal, i.e., $w_k e_k \equiv \frac{\mathrm{TOL}}{n}$ for some error tolerance $\mathrm{TOL}$.
It should be noted that this approach takes into account the actual quantity of interest: I.e., if we want to compute a function $g(y_T)$, the ``error'' considered here is, indeed, $| g(y_T) - g(\overline{y_T})|$.

The dual problem characterizing the weights $w_k$ is itself a (backward) (ordinary, rough, or stochastic) differential equation, depending on the context.
Hence, a fully implementable scheme will require solving the dual problem numerically using an appropriate time-stepping algorithm.
Therefore, the adaptive method will be based on grid-refinement rather than direct adaptive choices of the increment $t_{k+1} - t_k$ based on an already constructed grid $t_0, \ldots, t_k$.
More precisely, we start with an initial grid $0 = t^{(0)}_0 < t^{(0)}_1 < \cdots < t^{(0)}_{n_0} = T$.
On this grid, we solve both the (forward) problem for $y$ and the dual problem, given us local errors and weights $e^{(0)}_k$, $w^{(0)}_k$, $k=0, \ldots, n_0$.
If the total error estimate $\sum_{k=0}^{n_0-1} w^{(0)}_k e^{(0)}_k \le \mathrm{TOL}$, we are satisfied with the result.
Otherwise, we refine increments $[t_k, t_{k+1}]$ such that the corresponding (estimated) error contribution $w^{(0)}_k e^{(0)}_k$ is large, leading to a new grid $t_0^{(1)} < t^{(1)}_1 < \cdots < t^{(1)}_{n_1}$ with $n_1 > n_0$, and iterate the procedure.
Usually (for efficiency reasons), the refinement is done in such a way that the refined grid is strictly finer than the original grid, i.e., $\{t_0^{(i)}, \ldots, t_{n_i}^{(i)}\} \subset \{t_0^{(i+1)}, \ldots, t_{n_{i+1}}^{(i+1)}\}$.
Moreover, intervals $[t^{(i)}_k, t^{(i)}_{k+1}]$ are usually divided into a fixed number of uniform sub-intervals.
We will choose binary intervals, i.e., intervals are exactly subdivided into two new intervals of equal size, if they are refined.

In the context of the log-ODE method, we want to adaptively choose the degree $N_k$ as well as the time-step $t_{k+1} - t_k$.
This complicates the algorithm, since ``refinements'' can be done by either sub-dividing intervals or by locally increasing the degree of the method -- or both.
Additionally, the criterion for refinement becomes more complicated, since it is no longer enough to merely track the error contributions of individual sub-intervals $[t_k, t_{k+1}]$.
Instead, these error contributions need to be balanced against a corresponding local contribution to the total computational cost.
Indeed, the computational cost of local log-ODE steps from $t_k$ to $t_{k+1}$ depends on the degree $N_k$.

We now give a short outline of the paper. The notation and the basic definitions of rough path theory are recalled in Appendix \ref{sec:Notation}. In Section \ref{sec:MinorExtensions}, we prove some technical lemmas which mostly mildly extend already well-known theorems in the rough path literature to make them more suitable for our setting.

In Section \ref{sec:MainResults} we derive and prove the error representation formula. In fact, we do not restrict ourselves to the Log-ODE method here, but rather prove the fomula for very general approximation schemes for RDEs, including the previously mentioned Euler method. We then give an algorithm for actually estimating the error. For this, we need to impose further, much stronger assumptions on our schemes for approximating RDEs, see Section \ref{sec:AdmissibleAlgorithms}. In particular, the Log-ODE method satisfies these assumptions, while the Euler method does not. Finally, in Section \ref{sec:EstimateWeights} we give the algorithm for computing the error representation formula.

In Section \ref{sec:LogODE}, we apply our results to the Log-ODE method and prove that the Log-ODE method satisfies all the requirements of Section \ref{sec:AdmissibleAlgorithms}. 

In Section \ref{sec:Numerics} we give some numerical examples to illustrate the accuracy of our error representation formula and the efficiency of the adaptive algorithm.
In these examples, the adaptive method is used together with a cost model for the Log-ODE method to determine which action, refining the partition or increasing the degree, is more efficient.
More details on the cost model, including numerical justification is provided in Appendix \ref{sec:CostModel}.

Appendix \ref{sec:Euler} contains some results on the Euler approximation needed in the proofs of the paper.

\section{Rough Path analysis for unbounded vector fields}\label{sec:MinorExtensions}

In this section we prove some technical lemmas, mostly extending well-known rough path theorems for bounded vector fields to unbounded vector fields satisfying the $p$ non-explosion condition. Many of the proofs are carried out through localization.

\subsection{The $p$ non-explosion condition}

Recall the definition of the vector field spaces $\mathcal{V}$ and $\mathcal{V}^\gamma$ for $\gamma > 0$ from Appendix \ref{sec:RDESolutions}. Below, we give the definition of the $p$ non-explosion condition.

\begin{definition}\label{def:PNonExplosion}\cite[cf. Definition 11.1]{friz2010multidimensional}
Let $f\in\mathcal{V}(\R^d,\R^e)$, and $p\ge 1$. We say that $f$ satisfies the $p$ non-explosion condition if for every $R>0$ there exists an $M(R)>0$, such that for every $\by_0\in G^{[p]}(\R^e)$ and every $\bx\in C^{p\var}([0,T],\R^d)$ with $\|\by_0\| + \|\bx\|_{p\textup{-var}} \le R$ we have that every solution $\by$ to the full rough differential equation
\begin{equation}\label{eqn:FullRDE}
\dd\by_t = f(y_t) \sdd\bx_t,\qquad \by_0 = \by_0
\end{equation}
satisfies $\|\by\|_\infty \le M(R).$

For all $\gamma > 0$, we denote the set of vector fields $f\in\mathcal{V}^\gamma(\R^d,\R^e)$ satisfying the $p$ non-explosion condition by $\mathcal{V}^{\gamma,p}(\R^d,\R^e).$ The set $\mathcal{V}^{\gamma+,p}(\R^d,\R^e)$ is defined similarly.
\end{definition}

The following proposition shows that we can restrict ourselves to first-level RDEs in the above definition for $f\in \mathcal{V}^{\gamma-1}(\R^d,\R^e)$ if $\gamma > p$. The proof is a simple localization argument. Since many similar localization arguments will follow, we will usually omit these proofs, and only show the general idea here once.

\begin{proposition}\label{prop:PNonExplosionFirstLevel}
Let $f\in\mathcal{V}^{\gamma-1}(\R^d,\R^e)$ with $\gamma > p\ge 1$. Assume that for every $R>0$ there exists an $M>0$, such that for every $y_0\in \R^e$ and every $\bx\in C^{p\var}([0,T],\R^d)$ with $\|y_0\| + \|\bx\|_{p\textup{-var}}\le R$ we have that every solution $y$ to the RDE
\begin{equation}\label{eqn:FirstLevelRDE}
dy_t = f(y_t) d\bx_t,\qquad y_0 = y_0
\end{equation}
satisfies $\|y\|_\infty \le M$. Then, $f$ satisfies the $p$ non-explosion condition.
\end{proposition}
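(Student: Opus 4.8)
The plan is to upgrade the hypothesis on first-level solutions to the full-level statement of Definition \ref{def:PNonExplosion} via a localization argument, exactly as the paper advertises. First I would observe that for $f \in \mathcal{V}^{\gamma-1}$ with $\gamma > p$, the full RDE \eqref{eqn:FullRDE} has a unique solution whose first level $y$ solves \eqref{eqn:FirstLevelRDE}, and the higher levels of $\by$ are obtained as iterated rough integrals of $f(y)$ against $\bx$ (this is standard and covered by the existence/uniqueness theory recalled in the appendix). Note $\|\by_0\| + \|\bx\|_{p\var} \le R$ implies $\|y_0\| = |\pi_1(\by_0)| \le R$, so the hypothesis gives $\|y\|_\infty \le M = M(R)$ for the first level. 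The task is then to control the norm $\|\by\|_\infty = \sup_t \|\by_t\|$, i.e.\ the full group-valued path, not just its projection to $\R^e$.

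The key steps, in order: (1) Fix $R > 0$, let $M = M(R)$ be the first-level bound, and set $K := \overline{B}(0, M+1) \subset \R^e$ (or just $\overline{B}(0,M)$). Choose a cutoff $f_K \in \mathcal{V}^{\gamma-1}$ with bounded derivatives that agrees with $f$ on $K$ --- concretely $f_K = \chi \cdot f$ for a smooth compactly supported $\chi \equiv 1$ on $K$. Then $f_K \in \mathrm{Lip}^{\gamma-1}$ globally. (2) Since $y$ stays in $K$, the solution $y$ of \eqref{eqn:FirstLevelRDE} driven by $f$ coincides with the solution $y^K$ driven by $f_K$ (by uniqueness, as the two RDEs agree along the trajectory), and likewise the full solution $\by$ coincides with the full solution $\by^K$ driven by $f_K$. (3) Now apply the quantitative estimates for RDEs with $\mathrm{Lip}^{\gamma-1}$ (bounded) vector fields --- e.g.\ the a priori bound on $\|\by^K\|_{p\var;[0,T]}$ in terms of $\|f_K\|_{\mathrm{Lip}^{\gamma-1}}$, $\|\bx\|_{p\var;[0,T]}$ and $\|\by_0\|$ (Friz--Victoir, or the version proved in Section \ref{sec:MinorExtensions}) --- to get $\|\by\|_\infty = \|\by^K\|_\infty \le \|\by_0\| + \|\by^K\|_{p\var;[0,T]} \le \widetilde{M}(R)$, a bound depending only on $R$ (through $R$, $T$, and $\|f_K\|_{\mathrm{Lip}^{\gamma-1}}$, the latter fixed once $M(R)$ is fixed). (4) Taking the sup over all admissible $(\by_0, \bx)$ and all solutions $\by$ yields the $p$ non-explosion condition with constant $\widetilde{M}(R)$.

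The main obstacle --- really the only subtlety --- is step (2)/(3): making precise that "$y$ stays in $K$" lets us replace $f$ by the bounded $f_K$ not just for the first level but for the \emph{entire} lift $\by$, and then that the standard a priori estimates for bounded vector fields control the full path and not merely its first level. This requires knowing that the higher levels of $\by$ are determined pathwise by $y$, $f$, and $\bx$ (via rough integration of $f(y) \otimes$-iterated against $\dd\bx$), so that agreement of $f$ and $f_K$ along $y([0,T]) \subset K$ forces agreement of the full solutions; and it requires the quantitative RDE estimate for $\mathrm{Lip}^{\gamma-1}$ vector fields to be available in full-level form. Both are either classical (Friz--Victoir \cite{friz2010multidimensional}) or among the "technical lemmas" of Section \ref{sec:MinorExtensions}, so the argument is routine modulo bookkeeping; the write-up mainly needs to fix the cutoff construction cleanly and invoke the right a priori bound.
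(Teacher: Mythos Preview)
Your approach is essentially the paper's: bound the first level by hypothesis, localize $f$ to a ball containing the trajectory, and apply the a priori estimates for bounded $\lip^{\gamma-1}$ vector fields (the paper uses Whitney extension via Theorem~\ref{thm:WhitneysTheoremStein} and cites \cite[Theorem 10.36]{friz2010multidimensional} rather than a smooth cutoff, but this is cosmetic). There is, however, one wrinkle in your step~(2): you argue that $\by = \by^K$ \emph{by uniqueness}. With only $\lip^{\gamma-1}$ regularity (note $\gamma - 1$ may be $\le p$) uniqueness of RDE solutions is not available --- that requires $\lip^\gamma$ with $\gamma > p$. The paper sidesteps this entirely: it does not introduce a separate solution $\by^K$ but simply observes that since the vector field in the full RDE depends only on $y = \pi_1(\by)$, and $f = \tilde f$ on the range of $y$, the given $\by$ is \emph{itself} a solution to the full RDE with the bounded field $\tilde f$; the a priori bound of \cite[Theorem 10.36]{friz2010multidimensional} (which holds for \emph{every} solution, not just a unique one) then applies directly to $\by$. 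Likewise, your opening claim that the full RDE ``has a unique solution'' is neither needed nor justified at this stage --- Definition~\ref{def:PNonExplosion} asks for a bound on \emph{every} solution, so one should start from an arbitrary one. With these adjustments your argument is the paper's proof verbatim.
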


\begin{proof}
Let $\by_0\in G^{[p]}(\R^e)$ and let $\bx\in C^{p\var}([0,T],\R^d)$ with $\|\by_0\| + \|\bx\|_{p\textup{-var}}\le R.$ Let $\by$ be a solution to the full RDE \eqref{eqn:FullRDE}. Then, $y = \pi_1(\by)$ is a solution to \eqref{eqn:FirstLevelRDE} with $y_0 = \pi_1(\by_0)$. By our assumptions, we have $\|y\|_\infty \le M(R)$. Since the vector field $f$ in the full RDE \eqref{eqn:FullRDE} depends only on $y$, and not on the other (higher) levels in $\by$, we may restrict $f$ onto $B_{M(R)}$, and then extend this restriction to a bounded vector field $\widetilde{f}\in L(\R^d,\lip^{\gamma-1}(\R^e,\R^e))$ by Theorem \ref{thm:WhitneysTheoremStein}. The norm of $\widetilde{f}$ then depends only on $f$ and $M(R)$. In particular, if $\|\by_0\| + \|\bx\|_{p\textup{-var}} \le R$, we may replace $f$ by $\widetilde{f}$, and \cite[Theorem 10.36]{friz2010multidimensional} gives us a bound $$\|\by\|_{p\textup{-var}} \le C(\|\widetilde{f}\|_{\lip^{\gamma-1}}, R) < \infty.$$ We conclude by noting that $\|\by\|_\infty$ can be bounded by $\|\by_0\| \le R$ and $\|\by\|_{p\textup{-var}}.$
\end{proof}

We will frequently use the following lemma without further reference. It follows immediately from Lemma \ref{lem:RoughPathDifferentPConsistency} and Proposition \ref{prop:PNonExplosionFirstLevel}.

\begin{lemma}\label{lem:PNonExplosionImpliesQNonExplosion}
For all $\gamma > p\ge q\ge 1$, $\mathcal{V}^{\gamma-1,p}(\R^d,\R^e)\subseteq\mathcal{V}^{\gamma-1,q}(\R^d,\R^e)$.
\end{lemma}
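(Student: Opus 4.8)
The plan is to reduce to the first-level characterization of the $p$ non-explosion condition provided by Proposition \ref{prop:PNonExplosionFirstLevel} and then to use the fact that a $q$-rough path is, after canonical extension of its lift, also a $p$-rough path whenever $q \le p$. So let $f \in \mathcal{V}^{\gamma-1,p}(\R^d,\R^e)$ with $\gamma > p \ge q \ge 1$; in particular $\gamma > q$, so Proposition \ref{prop:PNonExplosionFirstLevel} applies with $q$ in place of $p$, and it suffices to produce, for every $R > 0$, a constant $M = M(R)$ such that every solution $y$ of the first-level RDE $\dd y_t = f(y_t)\sdd\bx_t$, $y_0 = y_0$, driven by some $\bx \in C^{q\var}([0,T],\R^d)$ with $\|y_0\| + \|\bx\|_{q\var} \le R$ satisfies $\|y\|_\infty \le M$. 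Since $f \in \mathcal{V}^{\gamma-1}$ holds by hypothesis, establishing the $q$ non-explosion condition for $f$ will give $f \in \mathcal{V}^{\gamma-1,q}(\R^d,\R^e)$.

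First I would record that $f \in \mathcal{V}^{\gamma-1,p}$ already yields such a bound for $p$-rough drivers: restricting the full RDE in Definition \ref{def:PNonExplosion} to its first level — exactly as in the proof of Proposition \ref{prop:PNonExplosionFirstLevel} — shows there is $M(R) > 0$ with $\|y\|_\infty \le M(R)$ whenever $y$ solves the first-level RDE driven by $\bx \in C^{p\var}([0,T],\R^d)$ with $\|y_0\| + \|\bx\|_{p\var} \le R$. Next, given $\bx \in C^{q\var}([0,T],\R^d)$ with $\|y_0\| + \|\bx\|_{q\var} \le R$, I would invoke Lemma \ref{lem:RoughPathDifferentPConsistency} to view $\bx$ as an element of $C^{p\var}([0,T],\R^d)$ (canonically extending its lift from level $[q]$ to level $[p]$ if $[q] < [p]$), with $\|\bx\|_{p\var} \le \|\bx\|_{q\var}$, and — crucially — such that the solution set of the first-level RDE is unchanged when $\bx$ is regarded as a $p$-rough path rather than a $q$-rough path. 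Then $\|y_0\| + \|\bx\|_{p\var} \le R$, so the bound from the previous step gives $\|y\|_\infty \le M(R)$. This verifies the hypothesis of Proposition \ref{prop:PNonExplosionFirstLevel} for $q$, hence $f$ satisfies the $q$ non-explosion condition and $f \in \mathcal{V}^{\gamma-1,q}(\R^d,\R^e)$.

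The only genuine content — and the point requiring care — is the consistency statement packaged in Lemma \ref{lem:RoughPathDifferentPConsistency}: that extending a $q$-rough path to a $p$-rough path creates no new solutions of the RDE and does not enlarge the relevant variation norm. Everything else is bookkeeping with the quantifiers in Definition \ref{def:PNonExplosion} together with the elementary monotonicity $\|\cdot\|_{p\var} \le \|\cdot\|_{q\var}$ for $p \ge q$.
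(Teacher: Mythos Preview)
Your proposal is correct and follows essentially the same route as the paper: reduce to first-level RDEs via Proposition \ref{prop:PNonExplosionFirstLevel}, then use Lemma \ref{lem:RoughPathDifferentPConsistency} to promote the $q$-rough driver to a $p$-rough driver and invoke the $p$ non-explosion condition. Two small imprecisions are worth cleaning up: the bound after lifting is $\|S_{[p]}(\bx)\|_{p\var} \le C(p,q)\,\|\bx\|_{q\var}$ rather than $\le \|\bx\|_{q\var}$ (the Lyons extension introduces a constant, which is harmless here), and Lemma \ref{lem:RoughPathDifferentPConsistency} only gives the inclusion of solution sets in one direction (every $q$-solution is a $p$-solution), not equality---but that is exactly the direction you need.
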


\subsection{Existence and uniqueness of solutions}

\begin{theorem}\label{thm:RDEExistence}
Let $\bx\in C^{p\var}([0,T],\R^d)$, let $f\in\mathcal{V}^{\gamma-1,p}(\R^d,\R^e)$ where $\gamma > p\ge 1$, and let $\by_0\in G^{[p]}(\R^e)$. Then, there exists a solution to the full RDE \eqref{eqn:FullRDE}. Moreover, if $\by$ is such a solution, then $$\|\by\|_{p\var;[s,t]} \le C\|\bx\|_{p\var;[s,t]}$$ for some $C = C(p,\gamma,f,\|\by_0\| + \|\bx\|_{p\var;[0,T]})$ increasing in the last component, and for all $0\le s\le t\le T$.

Moreover, let $(x^n)$ be a sequence of finite variation paths approximating $\bx$ in the sense of \eqref{eqn:ConvergenceToWeakGRP}. If $(\by^n)$ are solutions to the corresponding RDEs driven by $(x^n)$, then there exists a subsequence of $(\by^n)$ uniformly converging to a solution $\by$ of the RDE \eqref{eqn:FullRDE}.
\end{theorem}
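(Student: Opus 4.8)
The plan is to reduce everything to the well-developed theory for bounded $\Lip^{\gamma-1}$ vector fields via the localization technique already illustrated in the proof of Proposition 2.4, and then to transfer the estimates back to the unbounded setting. First I would establish existence together with the $p$-variation bound. Fix $\bx \in C^{p\var}([0,T],\R^d)$ and $\by_0 \in G^{[p]}(\R^e)$, set $R := \|\by_0\| + \|\bx\|_{p\var;[0,T]}$, and let $M = M(R)$ be the non-explosion bound from Definition 2.1 (available since $f \in \mathcal{V}^{\gamma-1,p}$). Any solution $\by$ of the full RDE has $\|\by\|_\infty \le M(R)$, so its first level takes values in the ball $B_{M(R)}$; hence we may use Theorem 1.2 (Whitney--Stein extension) to replace $f$ on $B_{M(R)}$ by a globally bounded $\widetilde{f} \in L(\R^d, \Lip^{\gamma-1}(\R^e,\R^e))$ whose $\Lip^{\gamma-1}$-norm depends only on $f$ and $M(R)$. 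For the bounded vector field $\widetilde{f}$, existence of a solution is classical (e.g. \cite[Theorem 10.36]{friz2010multidimensional}), and that theorem also yields the local estimate $\|\by\|_{p\var;[s,t]} \le C \|\bx\|_{p\var;[s,t]}$ for all $0 \le s \le t \le T$, with $C = C(p,\gamma,\|\widetilde f\|_{\Lip^{\gamma-1}})$. Since $\|\widetilde f\|_{\Lip^{\gamma-1}}$ is controlled by $f$ and $M(R)$, and $M(R)$ is increasing in $R$, this gives the claimed dependence $C = C(p,\gamma,f,R)$ increasing in the last argument. Because $\widetilde f$ agrees with $f$ wherever the solution lives, $\by$ is also a solution of the original RDE, proving existence.

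For the compactness/convergence statement I would follow the same localization. Let $(x^n)$ be finite-variation paths with lifts converging to $\bx$ in the sense of \eqref{eqn:ConvergenceToWeakGRP}; in particular $\sup_n \|x^n\|_{p\var;[0,T]} < \infty$, so there is a uniform $R' \ge R$ with $\|\by_0\| + \|x^n\|_{p\var} \le R'$ for all $n$. Applying the non-explosion bound uniformly, all solutions $\by^n$ satisfy $\|\by^n\|_\infty \le M(R')$, so we may perform the extension once, on $B_{M(R')}$, obtaining a single bounded $\widetilde f$ that simultaneously represents $f$ for every $\by^n$ and for the putative limit. The first part of the theorem (applied with $\widetilde f$) then gives a uniform bound $\sup_n \|\by^n\|_{p\var;[0,T]} \le C(p,\gamma,f,R') < \infty$. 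By interpolation this yields equi-$p'$-Hölder-type (or at least equi-continuity) control for any $p' > p$, and the Arzelà--Ascoli theorem (in the rough path / $p'$-variation topology, cf. the standard argument in \cite[Section 10]{friz2010multidimensional}) furnishes a subsequence $\by^{n_j}$ converging uniformly to some limit $\by$. Passing to the limit in the RDE driven by $\widetilde f$ — using continuity of the Itô--Lyons map for bounded vector fields together with $\bx^n \to \bx$ — identifies $\by$ as a solution of $\dd\by = \widetilde f(\by)\sdd\bx$, and since $\|\by\|_\infty \le M(R')$ keeps $\by$ inside the region where $\widetilde f = f$, it solves the original RDE \eqref{eqn:FullRDE}.

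The main obstacle I anticipate is not any single deep step but the bookkeeping needed to make the localization uniform across the whole sequence: one must choose the extension radius $M(R')$ before knowing the limit, verify that the same $\widetilde f$ serves all $\by^n$ simultaneously and in the limit, and confirm that the convergence mode in \eqref{eqn:ConvergenceToWeakGRP} is strong enough to pass to the limit in the (bounded-vector-field) Itô--Lyons map. A secondary technical point is ensuring the constant $C$ in the $p$-variation bound genuinely depends on $f$ only through $\|\by_0\| + \|\bx\|_{p\var}$ in an increasing fashion; this follows because $R \mapsto M(R)$ is increasing and $\|\widetilde f\|_{\Lip^{\gamma-1}}$ depends monotonically on the extension radius, but it should be stated carefully. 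Everything else is a direct citation of \cite[Theorem 10.36]{friz2010multidimensional} and the Whitney--Stein extension Theorem 1.2.
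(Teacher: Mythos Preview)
Your approach via localization and reduction to \cite[Theorem 10.36]{friz2010multidimensional} is exactly the paper's strategy. There is, however, a circular step in your existence argument: you apply the non-explosion bound $\|\by\|_\infty \le M(R)$ to solutions of the RDE driven by $f$, construct $\widetilde f$ agreeing with $f$ on $B_{M(R)}$, obtain a solution $\by$ for the $\widetilde f$-RDE via \cite[Theorem 10.36]{friz2010multidimensional}, and then assert that ``$\widetilde f$ agrees with $f$ wherever the solution lives''. But the non-explosion condition constrains solutions for $f$, not for $\widetilde f$, so nothing yet prevents the $\widetilde f$-solution from leaving $B_{M(R)}$ and entering a region where $\widetilde f \neq f$. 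The repair is standard: extend on a strictly larger ball, say $B_{M(R)+1}$, and argue that if the $\widetilde f$-solution ever exited $B_{M(R)}$, then up to a time slightly after the first exit it would still lie in $B_{M(R)+1}$, hence would be a genuine $f$-solution on that subinterval with supremum strictly exceeding $M(R)$, contradicting the non-explosion bound (applied on the restricted interval, where $\|\bx\|_{p\var}$ is only smaller). The paper's more detailed version handles this instead by truncating on an increasing sequence of balls $B_n$ and showing the exit times must eventually equal $T$.

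Once existence is secured this way, your $p$-variation estimate and the convergence/compactness argument are correct and match the paper. One minor wording point in the last step: you should not invoke ``continuity of the It\^o--Lyons map'', since the convergence in \eqref{eqn:ConvergenceToWeakGRP} is only uniform with uniformly bounded $p$-variation, not convergence in $p$-variation. Rather, after extracting a uniformly convergent subsequence, the limit is an RDE solution directly by the definition of RDE solution; equivalently, as the paper does, one can simply cite the compactness statement already contained in \cite[Theorem 10.36]{friz2010multidimensional} for the bounded vector field $\widetilde f$.
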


\begin{proof}
By a localization argument, we can reduce this theorem to \cite[Theorem 10.36]{friz2010multidimensional}.
\end{proof}

\begin{theorem}\label{thm:RDEUniqueness}
Let $\omega$ be a control, let $\bx^1,\bx^2\in C^{p\var}([0,T],\R^d)$ be controlled by $\omega$, let $f\in \mathcal{V}^{\gamma,p}(\R^d,\R^e)$ where $\gamma > p\ge 1$, and let $\by_0^1,\by_0^2\in G^{[p]}(\R^e).$ Let $\by^i$ be solutions to the full RDEs $$\dd\by^i_t = f(y^i_t) \sdd\bx^i_t,\qquad \by^i_0 = \by^i_0,\qquad i=1,2.$$ Then, the solutions $\by^i$ exist and are unique, and we have $$\rho_{p,\omega}(\by^1,\by^2) \le C\left(\|y_0^1-y_0^2\| + \rho_{p,\omega}(\bx^1,\bx^2)\right),$$ where $C = C(\gamma,p,f,\|\by_0^1\| + \|\by_0^2\| + \omega(0,T))$ is increasing in the last component.

Similarly, if $(\bx^n)$ is a sequence in $C^{p\var}([0,T],\R^d)$ converging to $\bx$ in $p$-variation, then the corresponding solutions $\by^n$ for the RDEs driven by $\bx^n$ converge in $p$-variation to the solution $\by$ driven by $\bx$.
\end{theorem}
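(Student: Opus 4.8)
The plan is to reduce everything to the corresponding statement for bounded (i.e., $\lip^\gamma$) vector fields, which is available in \cite[Theorem 10.36]{friz2010multidimensional} (both the uniqueness-with-local-Lipschitz-estimate part and the continuity part), via the localization technique illustrated in the proof of Proposition \ref{prop:PNonExplosionFirstLevel}. First I would invoke Theorem \ref{thm:RDEExistence} to get existence of solutions $\by^1, \by^2$ together with the a priori bounds $\|\by^i\|_{p\var;[0,T]} \le C\|\bx^i\|_{p\var;[0,T]}$; combined with $\|\by^i_0\| \le R$ this yields a uniform bound $\|\by^i\|_\infty \le M$, where $M$ depends only on $f$, $p$, $\gamma$ and $R := \|\by_0^1\| + \|\by_0^2\| + \omega(0,T)$ (using that $\|\bx^i\|_{p\var} \le \omega(0,T)^{1/p}$ since $\bx^i$ is controlled by $\omega$). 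Then I restrict $f$ to the ball $B_M$ and, by Theorem \ref{thm:WhitneysTheoremStein}, extend this restriction to a bounded vector field $\widetilde f \in L(\R^d, \lip^\gamma(\R^e,\R^e))$ whose $\lip^\gamma$-norm depends only on $f$ and $M$. Since both $\by^1$ and $\by^2$ stay inside $B_M$, each $\by^i$ is also a solution of the RDE driven by $\bx^i$ with the bounded vector field $\widetilde f$.

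Now I would apply \cite[Theorem 10.36]{friz2010multidimensional} to the bounded vector field $\widetilde f$: this gives uniqueness of the two solutions, and the local Lipschitz estimate
\[
\rho_{p,\omega}(\by^1,\by^2) \le C\big(\|\widetilde f\|_{\lip^\gamma},\, \omega(0,T)\big)\left(\|y_0^1 - y_0^2\| + \rho_{p,\omega}(\bx^1,\bx^2)\right).
\]
Since $\|\widetilde f\|_{\lip^\gamma}$ depends only on $f$ and $M$, and $M$ depends only on $\gamma, p, f, R$, the constant $C$ can be rewritten as a function of $\gamma, p, f$ and $R$, increasing in $R$ — which is exactly the claimed form. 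Uniqueness is unconditional because any solution of the original (unbounded) RDE with the given data necessarily satisfies $\|\by\|_\infty \le M$ by the $p$ non-explosion condition, hence agrees with a solution of the $\widetilde f$-RDE, and the latter is unique.

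For the final (sequential continuity) assertion, I would argue similarly: if $\bx^n \to \bx$ in $p$-variation, then $\sup_n \|\bx^n\|_{p\var} < \infty$, so by the $p$ non-explosion condition and the a priori bounds of Theorem \ref{thm:RDEExistence} there is a single $M$ with $\sup_n \|\by^n\|_\infty \vee \|\by\|_\infty \le M$; replacing $f$ by the bounded extension $\widetilde f$ as above, the convergence $\by^n \to \by$ in $p$-variation follows from the continuity statement in \cite[Theorem 10.36]{friz2010multidimensional} (after passing to a common control dominating all the $\bx^n$, e.g. $\omega_n(s,t) = \|\bx^n\|_{p\var;[s,t]}^p$, or simply using the $p$-variation metric directly). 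The main obstacle — really the only nontrivial point — is bookkeeping the dependence of the constants: one must check that the radius $M$ of the localizing ball, and hence $\|\widetilde f\|_{\lip^\gamma}$ and the final constant $C$, genuinely depend only on $R = \|\by_0^1\| + \|\by_0^2\| + \omega(0,T)$ (monotonically) and not on finer features of $\bx^1, \bx^2$; this is where the a priori estimate $\|\by\|_{p\var;[s,t]} \le C\|\bx\|_{p\var;[s,t]}$ from Theorem \ref{thm:RDEExistence} and the uniformity in Definition \ref{def:PNonExplosion} are essential. Everything else is the standard localization boilerplate that the authors have already flagged they will suppress.
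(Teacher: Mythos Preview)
Your approach is exactly the paper's: invoke Theorem \ref{thm:RDEExistence} for existence and a priori bounds, localize via Theorem \ref{thm:WhitneysTheoremStein} to reduce to a bounded $\lip^\gamma$ vector field, then quote the corresponding result from \cite{friz2010multidimensional}. The only discrepancy is the citation: \cite[Theorem 10.36]{friz2010multidimensional} is the existence result (already used in Theorem \ref{thm:RDEExistence}); the Lipschitz estimate $\rho_{p,\omega}(\by^1,\by^2) \le C(\|y_0^1-y_0^2\| + \rho_{p,\omega}(\bx^1,\bx^2))$ for bounded vector fields is \cite[Theorem 10.38]{friz2010multidimensional}, and the sequential continuity in $p$-variation is \cite[Corollary 10.40]{friz2010multidimensional}, which are the references the paper actually uses.
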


\begin{remark}
If one cares only about uniqueness, the $p$ non-explosion condition can of course be dropped. Uniqueness is a local issue.
\end{remark}

\begin{proof}
The existence of a solution was shown in Theorem \ref{thm:RDEExistence}. The statement of the theorem then follows from a simple localization argument, together with \cite[Theorem 10.38]{friz2010multidimensional} and \cite[Corollary 10.40]{friz2010multidimensional}.
\end{proof}

We can slightly strengthen the previous theorem for finite variation paths.

\begin{lemma}\label{lem:RDESolutionIsODESolution1RP}
Let $\omega$ be a control, let $\bx^1,\bx^2\in C^{1\var}([0,T],\R^d)$ be controlled by $\omega$, let $f\in \mathcal{V}^{1,1}(\R^d,\R^e)$, and let $\by^1_0,\by^2_0\in G^1(\R^e)$. Let $\by^i$ be solutions to the full RDEs 
\begin{equation}\label{eqn:RRRDE}
\dd\by^i_t = f(y^i_t)\sdd\bx^i_t,\qquad \by^i_0 = \by^i_0,\qquad i=1,2.
\end{equation}
Then, the solutions $\by^i$ exist and are unique, and we have
\begin{equation}\label{eqn:LipschitzSupremum}
\|\by^1-\by^2\|_\infty \le C\left(\|y^1_0-y^2_0\| + \|\bx^1-\bx^2\|_\infty\right),
\end{equation}
\begin{equation}\label{eqn:Lipschitz1Omega}
\rho_{1,\omega}(\by^1,\by^2) \le C\left(\|y_0^1-y_0^2\| + \rho_{1,\omega}(\bx^1,\bx^2)\right),
\end{equation}
and
\begin{equation}\label{eqn:Lipschitz1Var}
\rho_{1\var}(\by^1,\by^2) \le C\left(\|y^1_0-y^2_0\| + \rho_{1\var}(\bx^1,\bx^2)\right),
\end{equation}
where $C = C(f,\|\by_0^1\| + \|\by_0^2\| + \omega(0,T))$ is increasing in the last component. Moreover, $y^i = \pi_1(\by^i)$ are the unique solutions to the ODEs $$\dd y^i_t = f(y^i_t) \sdd x^i_t,\qquad y^i_0 = \pi_1(\by^i_0),\qquad i=1,2.$$
\end{lemma}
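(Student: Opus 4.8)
The plan is to reduce the whole statement to the classical Cauchy--Lipschitz theory of ordinary differential equations driven by finite-variation paths. The point is that $p=1$ forces $[p]=1$, so $G^{[p]}(\R^e)=G^1(\R^e)\cong\R^e$ carries nothing beyond the first level: a solution of the full RDE \eqref{eqn:RRRDE} is just the pair $\by^i=(1,y^i)$, where $y^i=\pi_1(\by^i)$ is a Riemann--Stieltjes solution of $\dd y^i_t=f(y^i_t)\sdd x^i_t$ with $y^i_0=\pi_1(\by^i_0)$. Hence the last assertion of the lemma becomes a tautology once we know existence and uniqueness, and these I would obtain from the standard Picard iteration in the sup-norm for locally Lipschitz vector fields, with the $1$ non-explosion condition ruling out blow-up on $[0,T]$ exactly as in (the argument behind) Theorem \ref{thm:RDEExistence}. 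Note that Theorems \ref{thm:RDEExistence} and \ref{thm:RDEUniqueness} cannot simply be quoted, since they require $\gamma>p$ whereas here $\gamma=p=1$; this borderline case is precisely why the lemma needs its own proof.

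Next I would localize as in the proof of Proposition \ref{prop:PNonExplosionFirstLevel}. Set $R:=\|\by^1_0\|+\|\by^2_0\|+\omega(0,T)$; since $\bx^i$ is controlled by $\omega$ we have $\|\bx^i\|_{1\var;[0,T]}\le\omega(0,T)$, so $\|\by^i_0\|+\|\bx^i\|_{1\var;[0,T]}\le R$ and the $1$ non-explosion condition gives $\|y^i\|_\infty\le M(R)$. Extending the restriction $f|_{B_{M(R)}}$ to a bounded vector field $\widetilde f\in L(\R^d,\lip^1(\R^e,\R^e))$ via Theorem \ref{thm:WhitneysTheoremStein}, with $\|\widetilde f\|_{\lip^1}$ depending only on $f$ and $R$, the paths $y^1,y^2$ also solve the ODEs driven by $x^i$ with vector field $\widetilde f$, and conversely. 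So from now on I may assume $f=\widetilde f$ is bounded and globally Lipschitz, and allow $C$ to depend on $R$ and $\|\widetilde f\|_{\lip^1}$.

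For such a vector field all three estimates follow by Grönwall's inequality once $x^1-x^2$ has been pulled out of the driving integral. For $0\le s\le t\le T$,
\begin{align*}
(y^1_t-y^1_s)-(y^2_t-y^2_s)=\int_s^t\bigl(\widetilde f(y^1_r)-\widetilde f(y^2_r)\bigr)\sdd x^1_r+\int_s^t\widetilde f(y^2_r)\sdd(x^1-x^2)_r,
\end{align*}
where the first integral is bounded by $\|\widetilde f\|_{\lip^1}\int_s^t\|y^1-y^2\|_{\infty;[0,r]}\sdd\|x^1\|_{1\var;[0,r]}$. In the second integral I integrate by parts; this needs only that $r\mapsto\widetilde f(y^2_r)$ is continuous and of finite variation $\le\|\widetilde f\|_{\lip^1}\|\widetilde f\|_\infty\,\omega(s,t)$ on $[s,t]$ (Lipschitz composed with bounded variation), so no differentiability of $\widetilde f$ is used. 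For $s=0$ this gives a bound $C\|x^1-x^2\|_{\infty;[0,t]}$ involving only the sup-norm, so the Stieltjes form of Grönwall's lemma (applied with $s=0$, with the increasing function $t\mapsto\|\widetilde f\|_{\lip^1}\|x^1\|_{1\var;[0,t]}$) yields \eqref{eqn:LipschitzSupremum}, with $C$ of the form $C(f,R)\cdot\exp\!\bigl(\|\widetilde f\|_{\lip^1}\omega(0,T)\bigr)$, hence increasing in $R$. For general $s<t$, rewriting the integrands in the second integral as increments $\widetilde f(y^2_r)-\widetilde f(y^2_s)$ and $x^1_{s,r}-x^2_{s,r}$ before integrating by parts produces a bound $C\,\rho_{1,\omega}(\bx^1,\bx^2)\,\omega(s,t)$; plugging \eqref{eqn:LipschitzSupremum} back in, dividing by $\omega(s,t)$ and taking the supremum over $s<t$ gives \eqref{eqn:Lipschitz1Omega}, and running the same computation with the controls $(s,t)\mapsto\|\bx^i\|_{1\var;[s,t]}$ in place of $\omega$, then summing over partitions, gives \eqref{eqn:Lipschitz1Var}. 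Alternatively one may just invoke the classical ODE stability estimates, e.g. \cite[Chapter 3]{friz2010multidimensional}.

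The main obstacle is bookkeeping rather than anything conceptual: because of the borderline $\gamma=p=1$ one must run the elementary ODE arguments by hand instead of quoting the earlier theorems of this section, and one must be careful with the Riemann--Stieltjes integration by parts so that $\|\bx^1-\bx^2\|_\infty$ — and not its $1$-variation — appears in \eqref{eqn:LipschitzSupremum}, while the remaining terms in \eqref{eqn:Lipschitz1Omega} and \eqref{eqn:Lipschitz1Var} come out in the correct homogeneous form (the cross terms produced by the integration by parts are of higher order in $\omega$ and are absorbed using $\omega(s,t)\le\omega(0,T)$).
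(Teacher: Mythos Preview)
Your proposal is essentially correct and follows the same localization-then-classical-ODE strategy as the paper, but two points deserve correction.

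First, you misread the hypotheses of Theorem~\ref{thm:RDEExistence}: it requires $f\in\mathcal V^{\gamma-1,p}$ with $\gamma>p$, so for $f\in\mathcal V^{1,1}$ one takes $\gamma=2>p=1$ and the theorem \emph{does} apply. The paper in fact opens by invoking it for existence of the RDE solution. You are right that Theorem~\ref{thm:RDEUniqueness} (which needs $f\in\mathcal V^{\gamma,p}$, $\gamma>p$) is unavailable, and this is indeed why the lemma needs its own argument.

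Second, the identification of full RDE solutions with ODE solutions is not quite the tautology you suggest. Even for $p=1$, the definition of an RDE solution goes through approximating sequences $(x^n)$ that converge only uniformly with bounded $1$-variation, and in principle such limits could produce paths that are not ODE solutions (compare Example~\ref{ex:CounterExample}). The paper closes this gap explicitly: after localizing to a bounded $\widetilde f\in L(\R^d,\lip^1)$, it uses the sup-norm stability of ODE solutions (\cite[Theorem~3.15]{friz2010multidimensional}) to show that for \emph{any} approximating sequence the ODE solutions $(y^n)$ are Cauchy and their limit is again an ODE solution, hence equal to the unique one. This short argument is what justifies your ``tautology''.

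For the estimates themselves the paper takes exactly the route you list as an alternative: it quotes \cite[Theorems~3.15 and~3.18]{friz2010multidimensional} directly for \eqref{eqn:LipschitzSupremum} and \eqref{eqn:Lipschitz1Var}, and then derives \eqref{eqn:Lipschitz1Omega} by applying \cite[Theorem~3.18]{friz2010multidimensional} on each subinterval $[s,t]$, bounding $\|y^1-y^2\|_{1\var;[s,t]}$ and dividing by $\omega(s,t)$. Your integration-by-parts derivation is correct but is precisely what those cited theorems package, so the paper's version is shorter.
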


\begin{proof}
Let us first fix $\bx = \bx^1$, and discuss existence and uniqueness. Of course, the existence of a solution $\by$ follows immediately from Theorem \ref{thm:RDEExistence}. Moreover, by the $1$ non-explosion condition (or also by Theorem \ref{thm:RDEExistence}), any such solution can be uniformly bounded by a constant $C$ (depending on the various parameters). Using Theorem \ref{thm:WhitneysTheoremStein}, we can restrict $f$ to $B_C$ and then extend it again to get a vector field $\widetilde{f}\in L(\R^d,\lip^1(\R^e,\R^e))$ admitting the same solutions as $f$.

Now, consider the ODE $$\dd y_t = \widetilde{f}(y_t) \sdd x_t,\qquad y_0 = \pi_1(\by_0).$$ Since $\widetilde{f}$ is Lipschitz, we know by \cite[Theorem 3.8]{friz2010multidimensional} that this ODE admits a unique solution $y$. By Lemma \ref{lem:ODESolutionsAreRDESolutions}, $\widetilde{\by} \coloneqq (1, y)$ is a solution to $$\dd\by_t = \widetilde{f}(y_t) \sdd\bx_t,\qquad \by_0 = \by_0,$$ and by our choice of $\widetilde{f}$, it is also a solution to \eqref{eqn:RRRDE}. We can then use this to conclude that $y$ was already the unique solution to
\begin{equation}\label{eqn:ThisODE}
\dd y_t = f(y_t) \sdd x_t,\qquad y_0 = \pi_1(\by_0).
\end{equation}
We have thus shown that the ODE \eqref{eqn:ThisODE} admits a unique solution for all finite variation paths $x$. 

Next, we show that the solution to the RDE \eqref{eqn:RRRDE} is also unique. To that end, let $(x^n)$ and $(\widetilde{x}^n)$ be sequences of finite variation paths satisfying \eqref{eqn:ConvergenceToWeakGRP}. Let $(y^n)$ and $(\widetilde{y}^n)$ be the corresponding (unique) ODE solutions. By \cite[Theorem 3.15]{friz2010multidimensional}, both $(y^n)$ and $(\widetilde{y}^n)$ are Cauchy sequences in the uniform norm, and hence, $y^n\to y$ and $\widetilde{y}^n\to\widetilde{y}$. Then, by definition, $\by\coloneqq (1,y)$ and $\widetilde{\by}\coloneqq (1,\widetilde{y})$ are solutions to the RDE \eqref{eqn:RRRDE}. However, also by \cite[Theorem 3.15]{friz2010multidimensional}, both $y$ and $\widetilde{y}$ are solutions to the ODE \eqref{eqn:ThisODE}. Since \eqref{eqn:ThisODE} admits unique solutions, $\by = \widetilde{\by}$, implying that RDE \eqref{eqn:RRRDE} admits unique solutions.

Now that we have established that both the RDE and the ODE have unique solutions, and these solutions coincide, equations \eqref{eqn:LipschitzSupremum} and \eqref{eqn:Lipschitz1Var} follow immediately from \cite[Theorem 3.15]{friz2010multidimensional} and \cite[Theorem 3.18]{friz2010multidimensional}, respectively.

It remains to prove equation \eqref{eqn:Lipschitz1Omega}. Let $0\le s\le t\le T$, then, by \cite[Theorem 3.18]{friz2010multidimensional},
\begin{align*}
\|(y^1_t-y^2_t) - (y^1_s-y^2_s)\| &\le \|y^1-y^2\|_{1\var;[s,t]}\\
&\le C\left(\|y^1_s - y^2_s\|\omega(s,t) + \|x^1-x^2\|_{1\var;[s,t]}\right)\\
&\le C\left(\|y^1_s - y^2_s\| + \rho_{1,\omega;[s,t]}(\bx^1,\bx^2)\right)\omega(s,t),
\end{align*}
and we conclude the proof of the lemma.
\end{proof}

In the following lemmas we prove that ODEs really are special cases of RDEs, given that the vector field is sufficiently regular.

\begin{lemma}\label{lem:RDESolutionIsODESolution}
Let $x\in C([0,T],\R^d)$ be of finite variation, and let $\bx\coloneqq S_{[p]}(x)_{0, .}\in C^{p\var,0}([0,T],\R^d)$ be the associated geometric $p$-rough path. Let $f\in \mathcal{V}^{\gamma,p}(\R^d,\R^e)$, where $\gamma > p \ge 1$, and let $y_0\in \R^e$. Let $y$ be the unique solution to the RDE \eqref{eqn:FirstLevelRDE}. Then, $y$ is the unique solution to the ODE $$\dd y_t = f(y_t) \sdd x_t,\qquad y_0 = y_0.$$
\end{lemma}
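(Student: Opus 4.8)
The plan is to reduce this statement to the special case $p = 1$, which is exactly Lemma \ref{lem:RDESolutionIsODESolution1RP}, via a consistency argument for rough paths of different $p$-variation regularity. First I would observe that, since $x$ has finite variation, its canonical lift $\bx = S_{[p]}(x)_{0,\cdot}$ is in fact the truncation of $S_1(x)_{0,\cdot}$, i.e. it carries no more information than the level-one path $x$ itself; this is the content of Lemma \ref{lem:RoughPathDifferentPConsistency} together with Lemma \ref{lem:RDESolutionIsODESolution1RP}, which I may invoke freely. Concretely, $x$ as a $1$-variation path lifts to $\widehat{\bx} := S_1(x)_{0,\cdot}\in C^{1\var}([0,T],\R^d)$, and $\bx = S_{[p]}(\widehat{\bx})$.

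Next I would record that $f\in\mathcal V^{\gamma,p}$ with $\gamma > p \ge 1$ implies $f\in\mathcal V^{\gamma,1}$ by Lemma \ref{lem:PNonExplosionImpliesQNonExplosion} (applied with $q=1$), and in particular $f\in\mathcal V^{1,1}$ after lowering the regularity exponent; this is precisely the hypothesis needed to apply Lemma \ref{lem:RDESolutionIsODESolution1RP}. That lemma tells us that the RDE $\dd\by_t = f(y_t)\sdd\widehat{\bx}_t$, $\by_0 = (1,y_0)$, has a unique solution, that its first level $\pi_1(\by)$ coincides with the unique ODE solution of $\dd y_t = f(y_t)\sdd x_t$, $y_0 = y_0$, and in particular that this ODE has a unique solution. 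It remains only to identify the solution $y$ of the first-level RDE \eqref{eqn:FirstLevelRDE} driven by $\bx = S_{[p]}(x)_{0,\cdot}$ with the solution of the RDE driven by $\widehat{\bx}$: but by the consistency of rough integration under the $[p]$-truncation map (Lemma \ref{lem:RoughPathDifferentPConsistency}), a path $y$ solves \eqref{eqn:FirstLevelRDE} against $\bx$ if and only if it solves the corresponding RDE against $\widehat{\bx}$, since $\bx$ is literally the higher-level enhancement of the finite-variation path $\widehat{\bx}$ and adds no new information. Uniqueness of the $p$-RDE solution is already guaranteed by Theorem \ref{thm:RDEUniqueness}.

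The only mildly delicate point — and the one I would spell out carefully — is the equivalence ``$y$ solves the $p$-RDE against $S_{[p]}(x)$ $\iff$ $y$ solves the $1$-RDE against $x$.'' This is where Lemma \ref{lem:RoughPathDifferentPConsistency} does the real work: one must check that the notion of RDE solution (defined via approximating sequences of finite-variation paths, cf. \eqref{eqn:ConvergenceToWeakGRP}) is stable under passing between the two lifts, which follows because any sequence $x^n\to x$ in $1$-variation automatically gives $S_{[p]}(x^n)\to S_{[p]}(x)$ in $p$-variation by the continuity of the lift map on bounded-variation paths (again Lemma \ref{lem:RoughPathDifferentPConsistency}), and conversely the first levels of such approximations converge in $1$-variation. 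Everything else is bookkeeping: chaining the identifications gives that $y$, the unique first-level $p$-RDE solution, equals $\pi_1$ of the $1$-RDE solution, which by Lemma \ref{lem:RDESolutionIsODESolution1RP} is the unique ODE solution, as claimed.
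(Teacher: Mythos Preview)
Your approach is exactly the paper's: reduce to $p=1$ via Lemma~\ref{lem:PNonExplosionImpliesQNonExplosion}, invoke Lemma~\ref{lem:RDESolutionIsODESolution1RP} for the $1$-rough path $\widehat{\bx}=S_1(x)$, then use Lemma~\ref{lem:RoughPathDifferentPConsistency} together with $p$-RDE uniqueness (Theorem~\ref{thm:RDEUniqueness}) to identify the two solutions. One correction, though: you claim an ``if and only if'' from Lemma~\ref{lem:RoughPathDifferentPConsistency}, and even attempt to justify the converse by saying approximating first levels converge in $1$-variation. Neither is available---the lemma only gives the implication \emph{from} the $1$-RDE \emph{to} the $p$-RDE (see the Remark immediately following it, and Example~\ref{ex:CounterExample} for why the converse can genuinely fail at low regularity). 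Fortunately you do not need the converse: the unique $1$-RDE solution $\widetilde y$ is, by the valid direction of Lemma~\ref{lem:RoughPathDifferentPConsistency}, a $p$-RDE solution, and uniqueness of the latter forces $y=\widetilde y$. That is precisely how the paper argues; just drop the ``iff'' and the unjustified converse sentence.
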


\begin{proof}
We remark that by Theorem \ref{thm:RDEUniqueness}, the solution $y$ to the RDE indeed exists and is unique.

By Lemma \ref{lem:PNonExplosionImpliesQNonExplosion}, $f\in\mathcal{V}^{\gamma,1}(\R^d,\R^e)$. Let $\widetilde{\bx} \coloneqq S_1(x)$ be $x$ interpreted as a geometric $1$-rough path. By Lemma \ref{lem:RDESolutionIsODESolution1RP}, the unique solution $\widetilde{y}$ to the RDE driven by $\widetilde{\bx}$ is in fact the unique solution to the ODE. By Lemma \ref{lem:RoughPathDifferentPConsistency}, $\widetilde{y}$ is also a solution to the RDE driven by $\bx$. Since $y$ was the unique RDE solution driven by $\bx$, we have $y = \widetilde{y}$, and we conclude.
\end{proof}

\subsection{Extensions of vector fields}

We now show that certain extensions of vector fields satisfying the $p$ non-explosion condition again satisfy the $p$ non-explosion condition. We remark that not all of these statements would be correct if one considered invariance of these extensions in the class of bounded vector fields instead.

\begin{lemma}\label{lem:PNonExplosionAdjoin}
Let $f\in\mathcal{V}^{\gamma-1,p}(\R^d,\R^e)$ with $\gamma > p\ge 1$, and define the vector field $\widetilde{f}\in\mathcal{V}^{\gamma-1}(\R^d,\R^{d+e})$ by $$\widetilde{f}(x,y) = 
\begin{pmatrix}
\id\\
f(y)
\end{pmatrix}
$$ Then, $\widetilde{f}$ also satisfies the $p$ non-explosion condition.\footnote{To clarify the potentially misleading notation: Here, $x\in\R^d$, $y\in\R^e$, and, if $z\in\R^d$ is another element, then $\widetilde{f}(x,y)(z) = \begin{pmatrix}
\id\\
f(y)
\end{pmatrix}z
= \begin{pmatrix}
z\\
f(y)z
\end{pmatrix}\in \R^{d+e}.$ Hence, $\widetilde{f}$ is a function from $\R^{d+e}$, acting linearly on $\R^d$, taking values in $\R^{d+e}$.}
\end{lemma}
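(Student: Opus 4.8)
The plan is to reduce, via Proposition~\ref{prop:PNonExplosionFirstLevel}, to an a priori bound on first-level solutions, and then to exploit the block-triangular structure of $\widetilde f$: the $\R^d$-component of any solution is essentially the driver itself, while the $\R^e$-component solves the original $f$-driven RDE, which is non-explosive by assumption. Since $f\in\mathcal{V}^{\gamma-1,p}(\R^d,\R^e)\subseteq\mathcal{V}^{\gamma-1}(\R^d,\R^e)$ and $\gamma>p$, we have $\widetilde f\in\mathcal{V}^{\gamma-1}(\R^d,\R^{d+e})$, so by Proposition~\ref{prop:PNonExplosionFirstLevel} it suffices to show: for every $R>0$ there is $M>0$ such that for every $(u_0,y_0)\in\R^d\times\R^e$ and every $\bx\in C^{p\var}([0,T],\R^d)$ with $\|(u_0,y_0)\|+\|\bx\|_{p\var}\le R$, every solution $\widetilde y=(u,y)$ of $\dd\widetilde y_t=\widetilde f(\widetilde y_t)\sdd\bx_t$, $\widetilde y_0=(u_0,y_0)$, satisfies $\|\widetilde y\|_\infty\le M$.

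Next I would fix such a solution and write $x\coloneqq\pi_1(\bx)$. The $\R^d$-block of $\widetilde f$ is the constant vector field $\id$, whose derivatives of all positive orders vanish; hence, in the local Euler/Davie expansion characterizing the RDE solution, every iterated-derivative term contributing to the $\R^d$-component beyond the first-order term $\id(x_{s,t})=x_{s,t}$ vanishes. Summing the resulting local estimate $|u_{s,t}-x_{s,t}|\le C\,\omega(s,t)^\theta$ (for a control $\omega$ and some $\theta>1$) over partitions with mesh tending to $0$ gives $u_{s,t}=x_{s,t}$ for all $s\le t$, i.e.\ $u_t=u_0+x_{0,t}$; in particular $\|u\|_\infty\le\|u_0\|+\|\bx\|_{p\var;[0,T]}\le R$. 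Since the $\R^e$-block of $\widetilde f$ at $(x,y)$ is $f(y)$ and does not involve the $\R^d$-variable, the same expansion shows that $y$ solves the RDE $\dd y_t=f(y_t)\sdd\bx_t$ with $\|y_0\|+\|\bx\|_{p\var}\le R$. (Alternatively, and more robustly, one approximates $\bx$ by lifts of smooth paths $x^n$, for which this decoupling is obvious, and passes to the limit using the stability part of Theorem~\ref{thm:RDEExistence}.)

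To finish, I would lift $y$ to its canonical full-rough-path solution $\by$ of the full RDE~\eqref{eqn:FullRDE} (the higher levels being iterated rough integrals of $f(y)$ against $\bx$), so that $\|\by_0\|+\|\bx\|_{p\var}\le R'$ for some $R'$ depending only on $R$; since $f$ satisfies the $p$ non-explosion condition, $\|y\|_\infty\le\|\by\|_\infty\le M_f(R')$ with $M_f$ the modulus from Definition~\ref{def:PNonExplosion} for $f$. Combining, $\|\widetilde y\|_\infty\le\|u\|_\infty+\|y\|_\infty\le R+M_f(R')\eqqcolon M$, which depends only on $R$ (and on $f,p,\gamma$). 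This verifies the hypothesis of Proposition~\ref{prop:PNonExplosionFirstLevel} for $\widetilde f$ and completes the argument.

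I expect the main obstacle to be not the overall structure but the two points that need care: (i) making the decoupling rigorous for a rough — rather than smooth — driver, which is exactly where the constancy of the $\R^d$-block of $\widetilde f$ enters, handled either directly through the Euler/Davie expansion or via smooth approximation together with the limit statement of Theorem~\ref{thm:RDEExistence}; and (ii) passing from the full-RDE phrasing of the $p$ non-explosion condition for $f$ to a bound on the first-level solution $y$, which is dealt with by lifting $y$ to a compatible full solution. Both are routine within the framework already set up, so the proof should be short once these are in place.
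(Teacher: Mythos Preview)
Your approach is essentially the same as the paper's: reduce to first-level solutions via Proposition~\ref{prop:PNonExplosionFirstLevel}, observe that the block-triangular structure of $\widetilde f$ forces any solution to split as $(u_0+x_{0,\cdot},\,y)$ with $y$ solving the $f$-driven RDE, and then invoke the $p$ non-explosion condition for $f$. The paper's proof simply asserts the decoupling as ``clear'' and applies non-explosion to $y$ directly, whereas you spell out (i) the justification of the decoupling (via Davie-type expansion or smooth approximation) and (ii) the lifting of $y$ to a full solution so as to literally match Definition~\ref{def:PNonExplosion}; both are reasonable elaborations of steps the paper leaves implicit, not a different route.
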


\begin{proof}
Let $z_0 = (x_0,y_0)\in\R^{d+e}$, and let $\bx\in C^{p\var}([0,T],\R^d)$ with $\|z_0\| + \|\bx\|_{p\textup{-var}} \le R$. Let $z$ be a solution to the first-level RDE $$\dd z_t = \widetilde{f}(z_t) \sdd\bx_t,\qquad z_0 = z_0.$$ By the definition of $\widetilde{f}$, it is clear that $z_t = (x_t,y_t),$ where $x_t = \pi_1(\bx_t)$, and where $y_t$ is a solution to $$\dd y_t = f(y_t) \sdd\bx_t,\qquad y_0 = y_0.$$ Hence, $$\|z\|_\infty \le C(R, M(R)),$$ where the $M(R)$ comes from the $p$ non-explosion condition of $f$. We conclude by Proposition \ref{prop:PNonExplosionFirstLevel}.
\end{proof}

\begin{lemma}\label{lem:PNonExplosionIntegral}
Let $f\in L(\R^d,\lip^{\gamma-1}_{\textup{loc}}(\R^d,\R^e)).$ Then, the vector field $\widetilde{f}\in\mathcal{V}^{\gamma-1}(\R^d,\R^{d+e})$ defined by $$\widetilde{f}(x, y) = 
\begin{pmatrix}
\id\\
f(x)
\end{pmatrix}
$$ satisfies the $p$ non-explosion condition for $p<\gamma$.
\end{lemma}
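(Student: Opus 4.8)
The plan is to run exactly the localization argument already used for Lemma~\ref{lem:PNonExplosionAdjoin}, exploiting the simplification that $\widetilde f$ depends only on the first block $x$ of the coordinates and that this block is transported rigidly by the identity. By Proposition~\ref{prop:PNonExplosionFirstLevel} (applicable since $\widetilde f\in\mathcal V^{\gamma-1}(\R^d,\R^{d+e})$ and $\gamma>p$), it is enough to produce, for each $R>0$, a bound $\|z\|_\infty\le M=M(R)$ valid for every first-level solution $z$ of $\dd z_t=\widetilde f(z_t)\,\dd\bx_t$, $z_0=(x_0,y_0)$, with $\|z_0\|+\|\bx\|_{p\var}\le R$.

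First I would write $z_t=(x_t,y_t)$. Because the $\R^d$-block of $\widetilde f$ is the constant map $\id$, the equation forces $x_t=x_0+\pi_1(\bx)_{0,t}$; thus $x$ is completely determined by $\bx$ (independently of the solution $z$), and $\|x\|_\infty\le\|x_0\|+\|\bx\|_{p\var}\le R=:M_1$. Next, restrict $f$ to the closed ball $\overline{B_{M_1}}\subset\R^d$ and extend this restriction, via Theorem~\ref{thm:WhitneysTheoremStein}, to a bounded vector field $\widetilde f_1\in L(\R^d,\lip^{\gamma-1}(\R^d,\R^e))$ whose $\lip^{\gamma-1}$-norm depends only on $f$ and $M_1$. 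Setting
$$\overline f(x,y)=\begin{pmatrix}\id\\ \widetilde f_1(x)\end{pmatrix},$$
one gets $\overline f\in L(\R^d,\lip^{\gamma-1}(\R^{d+e},\R^{d+e}))$ bounded in $\lip^{\gamma-1}$: the $\id$-block is constant, and the second block is $\lip^{\gamma-1}$ in $x$ and independent of $y$.

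Since $x$ never leaves $\overline{B_{M_1}}$, where $\widetilde f_1$ agrees with $f$, the path $z$ is also a solution of the RDE driven by $\bx$ with the bounded vector field $\overline f$. Hence \cite[Theorem~10.36]{friz2010multidimensional} gives $\|z\|_{p\var}\le C(\|\overline f\|_{\lip^{\gamma-1}},R)<\infty$, and therefore $\|z\|_\infty\le\|z_0\|+\|z\|_{p\var}\le M$ for a constant $M=M(R)$. This is precisely the hypothesis of Proposition~\ref{prop:PNonExplosionFirstLevel}, which then yields the $p$ non-explosion condition for $\widetilde f$.

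There is essentially no serious obstacle here; the point worth stressing is that, in contrast with Lemma~\ref{lem:PNonExplosionAdjoin}, no non-explosion assumption on $f$ is required, because the argument fed into $f$ is the rigidly transported coordinate $x=\pi_1(\bx)$, whose range is controlled a priori by $\|\bx\|_{p\var}$. The only points demanding a little care are checking that the extension $\overline f$ is genuinely bounded in $\lip^{\gamma-1}$ as a vector field on $\R^{d+e}$ (immediate, since one block is constant and the other factors through the bounded map $\widetilde f_1$), and noting that $p<\gamma$ is used exactly so that Proposition~\ref{prop:PNonExplosionFirstLevel} and \cite[Theorem~10.36]{friz2010multidimensional} apply.
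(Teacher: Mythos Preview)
Your proposal is correct and follows essentially the same localization argument as the paper: bound the $x$-component a priori by $R$, restrict $f$ to $B_R$ and extend via Theorem~\ref{thm:WhitneysTheoremStein} to a globally bounded $\lip^{\gamma-1}$ vector field, observe that the solutions are unchanged, and then invoke a standard a priori bound for bounded vector fields before applying Proposition~\ref{prop:PNonExplosionFirstLevel}. The only cosmetic difference is that the paper cites \cite[Theorem~10.14]{friz2010multidimensional} for the final bound while you cite \cite[Theorem~10.36]{friz2010multidimensional}; either reference suffices here.
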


\begin{proof}
Let $z_0 = (x_0,y_0)\in \R^{d+e}$ and let $\bx\in C^{p\var}([0,T],\R^d)$ with $\|z_0\| + \|\bx\|_{p\textup{-var}}\le R$. Let $z$ be a solution to the RDE $$dz_t = \widetilde{f}(z_t)d\bx_t,\qquad z_0 = z_0.$$ Then, $z_t = (x_t, y_t)$. Now, $$\|x_t\| \le \|x_0\| + \|x_{0,t}\| \le \|z_0\| + \|\bx\|_{p\var} \le R.$$ Hence, we may restrict $f$ onto $B_R$, and then extend $f$ to a bounded vector field $f_1\in L(\R^d,\lip^{\gamma-1}(\R^d,\R^e))$ that agrees with $f$ on $B_R$. We then define the vector field $\widetilde{f}_1$ in a similar way as $\widetilde{f}$. Using $\widetilde{f}_1$ instead of $\widetilde{f}$ does not change the solutions if $\|z_0\| + \|\bx\|_{p\textup{-var}}\le R.$ Since $\widetilde{f}_1$ is bounded, we conclude by using the bound in \cite[Theorem 10.14]{friz2010multidimensional} and applying Proposition \ref{prop:PNonExplosionFirstLevel}.
\end{proof}

\begin{lemma}\label{lem:RoughIntegralWellDefined1}
Let $\bx\in C^{p\var}([0,T],\R^d)$, and let $f\in \mathcal{V}^{\gamma,p}(\R^d,\R^e)$ with $\gamma > p\ge 1$. Let $\by_0\in G^{[p]}(\R^e)$ and let $\by$ be the unique solution to the full RDE \eqref{eqn:FullRDE}. Then, the rough integral
\begin{equation}\label{eqn:RoughIntegral}
\bm{h} \coloneqq \int f'(y) \sdd\bx \in C^{p\var}([0,T],\R^{e\times e}) \cong C^{p\var}([0,T],\R^{e^2})
\end{equation}
is well-defined, i.e. it exists and is unique. Moreover, there exists a constant $C=C(p,\gamma,f,\|\by_0\| + \|\bx\|_{p\var,[0,T]})$ increasing in the last component, such that for all $0\le s\le t\le T$, we have $$\|\bm{h}\|_{p\var;[s,t]} \le C\|\bx\|_{p\var;[s,t]}.$$
\end{lemma}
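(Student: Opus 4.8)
The plan is to reduce, by the localization device used throughout this section, to the case of a bounded vector field, and then to exhibit $\bm h$ as a marginal of the solution of an auxiliary full RDE, to which Theorems \ref{thm:RDEExistence} and \ref{thm:RDEUniqueness} apply directly.

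First I would use Theorem \ref{thm:RDEExistence}: the solution $\by$ of the full RDE exists (and is unique by Theorem \ref{thm:RDEUniqueness}) and satisfies $\|\by\|_{p\var;[s,t]} \le C\|\bx\|_{p\var;[s,t]}$ with $C$ of the form claimed in the lemma; in particular $\|y\|_\infty \le M$ for some $M = M(p,\gamma,f,\|\by_0\|+\|\bx\|_{p\var;[0,T]})$. By Theorem \ref{thm:WhitneysTheoremStein} I restrict $f$ to a neighbourhood of the closed ball $B_M$ and extend this restriction to a bounded $\widetilde f \in L(\R^d,\lip^\gamma(\R^e,\R^e))$ whose norm depends only on $f$ and $M$. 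Since $\widetilde f$ coincides with $f$ (hence $\widetilde f'$ with $f'$) on the range of $y$, the path $y$ still solves the RDE driven by $\widetilde f$ and $\widetilde f'(y) = f'(y)$ on $[0,T]$, so it suffices to prove the lemma when $f$ itself is bounded with $f\in L(\R^d,\lip^\gamma(\R^e,\R^e))$.

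Next I would introduce, on $\R^e\times\R^{e\times e}$, the vector field $F\in\mathcal V^{\gamma-1}(\R^d,\R^{e+e^2})$ given by $F(y,h) = \binom{f(y)}{f'(y)}$, which does not depend on $h$. Because $f$ is bounded in $\lip^\gamma$, its derivative $f'$ is bounded in $\lip^{\gamma-1}$, so $F$ is bounded and lies in $\mathcal V^{\gamma-1}$, and, being bounded, it satisfies the $p$ non-explosion condition; thus $F\in\mathcal V^{\gamma-1,p}(\R^d,\R^{e+e^2})$ with $\gamma>p$ — exactly the hypotheses of Theorems \ref{thm:RDEExistence} and \ref{thm:RDEUniqueness}. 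Applying them to the full RDE $\dd(\bm y,\bm h)_t = F(y_t,h_t)\sdd\bx_t$, with initial value $\by_0$ embedded into $G^{[p]}(\R^{e+e^2})$ with trivial $\R^{e^2}$-components, gives a unique solution $(\bm y,\bm h)$ with $\|(\bm y,\bm h)\|_{p\var;[s,t]}\le C\|\bx\|_{p\var;[s,t]}$. Since $F$ is independent of $h$, the $\R^e$-marginal of this solution solves the original full RDE and hence equals $\by$ by uniqueness, while the $\R^{e^2}$-marginal $\bm h$ has first level $t\mapsto\int_0^t f'(y_s)\sdd\bx_s$; this is precisely the meaning of $\bm h = \int f'(y)\sdd\bx$ (equivalently, of the limit of $\int f'(y^n)\sdd x^n$ along finite-variation approximations of $\bx$), so the rough integral is well defined — it exists because the auxiliary RDE has a solution, and is unique because that solution is — and it inherits $\|\bm h\|_{p\var;[s,t]} \le \|(\bm y,\bm h)\|_{p\var;[s,t]} \le C\|\bx\|_{p\var;[s,t]}$.

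I expect no genuinely hard step here: as in the other lemmas of this section the argument is routine localization, and the only point to get right is that passing from $f$ to $f'$ loses one derivative but stays in an admissible class (since $\gamma>p$ forces $f'\in\lip^{\gamma-1}$ with $\gamma>p$, which is still of the form required by Theorem \ref{thm:RDEExistence}), together with the bookkeeping that identifies the appropriate marginal of the auxiliary solution with the object denoted $\int f'(y)\sdd\bx$.
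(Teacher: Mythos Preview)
Your localization step and the auxiliary-RDE device are the right moves, and both existence of a joint solution and the $p$-variation bound follow from Theorem~\ref{thm:RDEExistence} just as you say. The gap is in the uniqueness argument. After localization your vector field $F(y,h)=\bigl(f(y),f'(y)\bigr)$ is bounded, but it is only $\lip^{\gamma-1}$, because passing from $f$ to $f'$ costs one derivative. Theorem~\ref{thm:RDEUniqueness} requires the vector field to lie in $\mathcal V^{\gamma',p}$ with $\gamma'>p$; applying it to $F$ would need $\gamma-1>p$, i.e.\ $\gamma>p+1$, which is not assumed. So the sentence ``exactly the hypotheses of Theorems~\ref{thm:RDEExistence} and~\ref{thm:RDEUniqueness}'' is wrong for the second theorem, and you have not actually established uniqueness of the joint full solution $(\bm y,\bm h)$, hence of $\bm h$.

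The paper's proof handles this differently and explains why the more roundabout construction is used. It first adjoins the driving path to the solution, forming $\bz=(\bx,\by)$ via the $\lip^\gamma$ vector field of Lemma~\ref{lem:PNonExplosionAdjoin} (so $\bz$ is unique by Theorem~\ref{thm:RDEUniqueness}), and then realizes the integral as $\int g(z)\sdd\bz$ with $g(x,y)=(f'(y),0)$. After localizing $f'$ this is a rough integral of a bounded $\lip^{\gamma-1}$ one-form along $\bz$, and well-definedness (existence \emph{and} uniqueness) follows from \cite[Theorem~10.47]{friz2010multidimensional}, which is specific to rough integrals and does not demand the extra degree of regularity that Theorem~\ref{thm:RDEUniqueness} does. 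The detour through $\bz$ is also what makes the expression fit the paper's Definition of a rough integral, in which the integrand is a function of the driving path itself. Your argument can be repaired by invoking that rough-integral result in place of Theorem~\ref{thm:RDEUniqueness}, but as written the uniqueness claim does not close.
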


\begin{proof}
First, we remark that by Theorem \ref{thm:RDEUniqueness}, there indeed exists a unique full solution $\by$ of the full RDE \eqref{eqn:FullRDE}. By Lemma \ref{lem:PNonExplosionAdjoin}, the vector field $$f_1(x, y) = 
\begin{pmatrix}
\id\\
f(y)
\end{pmatrix}
$$ is in $\mathcal{V}^{\gamma,p}(\R^d,\R^{d+e}),$ and by Theorem \ref{thm:RDEUniqueness}, there exists a unique solution $\bz$ of the full RDE $$\dd\bz_t = f_1(z_t)\sdd\bx_t,\qquad \bz_0 = (\bm{1}, \by_0)\in G^{[p]}(\R^{d+e}).$$ While this choice of $\bz_0$ is not unique, the increments of the rough path $\bz$ are independent of the particular choice. In fact, it is clear that the solution $\bz$ is the joint rough path $\bz = (\bx,\by)$.

Next, define the function $g\in L(\R^{d+e}, \lip^{\gamma-1}_{\textup{loc}}(\R^{d+e},\R^{e^2}))$, $$g(x, y) = 
\begin{pmatrix}
f'(y) & 0
\end{pmatrix}
.$$ The function $g$ has essentially the same effect as $f'$, except that it is defined on $z = (x,y)$.

By Lemma \ref{lem:PNonExplosionIntegral}, the vector field $$f_2(z, h) =
\begin{pmatrix}
\id\\
g(z)
\end{pmatrix}
$$ is in $\mathcal{V}^{\gamma-1,p}(\R^{d+e},\R^{d+e+e^2})$, and by Theorem \ref{thm:RDEExistence}, there exists a solution $\bm{v}$ of the RDE
\begin{equation}\label{eqn:SomeRDE}
d\bm{v}_t = f_2(v_t) d\bz_t,\qquad \bm{v}_0 = (\bz_0, \bm{1}).
\end{equation}

Such a solution must be of the form $\bm{v} = (\bz, \bm{h}),$ where $\bm{h}$ is a solution to the rough integral \eqref{eqn:RoughIntegral}. In particular, since $\|z\|_\infty$ is bounded, we may restrict $f'$ to a compact ball $B$ sufficiently large, and then extend this restriction again to a vector field $f_3\in L(\R^d,\lip^{\gamma-1}(\R^e,\R^e))$ that agrees with $f'$ on $B$. We can then replace $f'$ by $f_3$ in the definition of $f_2$  to get a new vector field $f_4$. The solutions of \eqref{eqn:SomeRDE} with $f_2$ replaced by $f_4$ must then remain unchanged. This again implies that instead of solving \eqref{eqn:RoughIntegral}, we may also compute the rough integral $$\int_s^u f_3(\by_t) \sdd\bx_t.$$ Since $f_3$ is $\lip^{\gamma-1}$, we conclude that the integral is well-defined by \cite[Theorem 10.47]{friz2010multidimensional}. Finally, the bound on $\|\bm{h}\|_{p\var}$ follows from $$\|\bm{h}\|_{p\var;[s,t]} \le \|\bm{v}\|_{p\var;[s,t]} \le C\|\bz\|_{p\var;[s,t]} \le C\|\bx\|_{p\var;[s,t]},$$ where the latter two inequalities follow from Theorem \ref{thm:RDEExistence}.
\end{proof}

\begin{lemma}\label{lem:RoughIntegralWellDefined2}
In the setting of Lemma \ref{lem:RoughIntegralWellDefined1}, if $(x^n)$ is a sequence of finite variation paths satisfying \eqref{eqn:ConvergenceToWeakGRP}, and $(y^n)$ are the corresponding solutions to the RDEs driven by $(x^n)$, and we define $$h^n\coloneqq \int f'(y^n) \sdd x^n,$$ then $(S_{[p]}(h^n))$ converges to $\bm{h}$ uniformly. If additionally, $\bx$ is a geometric $p$-rough path and $(S_{[p]}(x^n))$ converges to $\bx$ in $p$-variation, then $(S_{[p]}(h^n))$ converges to $\bm{h}$ in $p$-variation.
\end{lemma}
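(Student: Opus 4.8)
The plan is to reduce both assertions to continuity results that are already available --- Theorems \ref{thm:RDEExistence} and \ref{thm:RDEUniqueness} for RDEs, and the continuity of rough integration --- by re-using the construction in the proof of Lemma \ref{lem:RoughIntegralWellDefined1}. Recall from there that $\bm{h}$ is the $\R^{e^2}$-component of the solution $\bm{v}=(\bz,\bm{h})$ of the RDE \eqref{eqn:SomeRDE} driven by $\bz=(\bx,\by)$, that $\bz$ itself solves the RDE driven by $\bx$ with $f_1\in\mathcal{V}^{\gamma,p}$, and that --- assembling the two stages --- the triple $(\bx,\by,\bm{h})$ is the solution of a single RDE driven by $\bx$ whose composite vector field is built from $f$ and $f'$ and lies in $\mathcal{V}^{\gamma-1,p}$; moreover, after localisation all these vector fields may be replaced by bounded ones without affecting the solutions on the ball on which they take values. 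On the approximating side, by the argument used in Lemma \ref{lem:RDESolutionIsODESolution} (together with Lemma \ref{lem:ODESolutionsAreRDESolutions}) one checks that $y^n=\pi_1(\by^n)$ is the first level of the RDE driven by $S_{[p]}(x^n)$, that the Young integral $h^n=\int f'(y^n)\sdd x^n$ coincides with the rough integral $\int f'(\by^n)\sdd S_{[p]}(x^n)$, and hence that $\bm{v}^n\coloneqq S_{[p]}(x^n,y^n,h^n)$ solves that same composite RDE, now driven by $x^n$.

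For the uniform convergence statement I would invoke the convergence part of Theorem \ref{thm:RDEExistence}: since $x^n$ satisfies \eqref{eqn:ConvergenceToWeakGRP}, every subsequence of $(\bm{v}^n)$ has a further subsequence converging uniformly to a solution $\bm{v}$ of the composite RDE driven by $\bx$. Any such limit has the form $(\bz,\bm{h}')$ with $\bz=(\bx,\by)$ the unique RDE solution driven by $\bx$ and $\bm{h}'=\int f'(y)\sdd\bx$, and this integral is unique by Lemma \ref{lem:RoughIntegralWellDefined1}, so $\bm{h}'=\bm{h}$ and all subsequential limits coincide. Therefore the full sequence $S_{[p]}(h^n)$, being the $\R^{e^2}$-component of $\bm{v}^n$, converges uniformly to $\bm{h}$.

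For the $p$-variation statement I would first apply Theorem \ref{thm:RDEUniqueness} to $f_1\in\mathcal{V}^{\gamma,p}$ (legitimate since $\gamma>p$): from $S_{[p]}(x^n)\to\bx$ in $p$-variation it yields that the joint solutions $\bz^n\coloneqq S_{[p]}(x^n,y^n)$ converge to $\bz=(\bx,\by)$ in $p$-variation. It then remains to push this convergence through the rough integral, i.e.\ to show $\int f'(\by^n)\sdd\bx^n\to\int f'(\by)\sdd\bx$ in $p$-variation as $\bz^n\to\bz$ in $p$-variation. After localising $f'$ to a bounded $\lip^{\gamma-1}$ vector field (the relevant solutions being uniformly bounded, exactly as in the proof of Lemma \ref{lem:RoughIntegralWellDefined1}), this is precisely the stability of rough integration with respect to the driving rough path, cf.\ \cite[Theorem 10.47]{friz2010multidimensional} and its proof --- note that the integrand regularity $\gamma-1$ exceeds the threshold $p-1$ required there. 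This gives $S_{[p]}(h^n)\to\bm{h}$ in $p$-variation.

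The main obstacle is exactly why the second step must be routed through rough-integral continuity rather than Theorem \ref{thm:RDEUniqueness} applied to the composite system: the vector field governing $\bm{h}$ is built from $f'$ and hence lies only in $\mathcal{V}^{\gamma-1,p}$, and $\gamma-1$ need not exceed $p$, so the RDE uniqueness/continuity theorem is unavailable for the triple $(\bx,\by,\bm{h})$ as a whole. One must keep $\bm{h}$ separate and use that the correct regularity threshold for the rough integral $\int f'(\by)\sdd\bx$ is $p-1$, which $f'\in\lip^{\gamma-1}$ does meet. A secondary, routine point is the identification, for finite-variation data, of the Young integral $h^n$ with the rough integral against $S_{[p]}(x^n)$, dispatched just as the ODE/RDE identification in Lemma \ref{lem:RDESolutionIsODESolution}.
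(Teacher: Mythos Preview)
Your proposal is correct and follows essentially the same route as the paper. The paper's own proof simply points to the construction in Lemma \ref{lem:RoughIntegralWellDefined1}, Theorem \ref{thm:RDEUniqueness}, and \cite[Corollary 10.48]{friz2010multidimensional}; its (commented-out) expanded version proceeds exactly as you do for the $p$-variation statement: first $\bz^n\to\bz$ in $p$-variation via Theorem \ref{thm:RDEUniqueness} applied to $f_1\in\mathcal{V}^{\gamma,p}$, then localise $f'$ to a bounded $\lip^{\gamma-1}$ one-form and pass to the limit in the rough integral.

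Two minor points. First, the precise reference for the continuity of rough integration under convergence of the driving rough path is \cite[Corollary 10.48]{friz2010multidimensional}, not Theorem 10.47 (which gives existence and bounds); this corollary covers both the uniform and the $p$-variation modes of convergence at once. Second, for the uniform statement the paper runs the two stages separately (first $\bz^n\to\bz$ uniformly, then Corollary 10.48 for $\bm{h}^n\to\bm{h}$), whereas you pack everything into a single composite RDE and argue component-wise uniqueness of the limit. Your version works, but it tacitly uses that a subsequential limit of $\bm{v}^n$ solving the composite RDE driven by $\bx$ is also a solution of \eqref{eqn:SomeRDE} driven by $\bz$ --- otherwise Lemma \ref{lem:RoughIntegralWellDefined1} cannot be invoked to identify the $\bm{h}$-component. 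This is true (since the approximating ODE solutions serve both purposes), but the two-stage argument via Corollary 10.48 sidesteps the issue entirely. Your identification of the key obstruction --- that the composite vector field lies only in $\mathcal{V}^{\gamma-1,p}$, blocking a direct appeal to Theorem \ref{thm:RDEUniqueness} for the triple --- is exactly right.
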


\begin{proof}
This statement follows easily from the proof of Lemma \ref{lem:RoughIntegralWellDefined1} and from Theorem \ref{thm:RDEUniqueness} and \cite[Corollary 10.48]{friz2010multidimensional}.
\end{proof}

\begin{lemma}\label{lem:RoughIntegralWellDefined3}
Let $\omega$ be a control, let $\bx^1,\bx^2\in C^{p\var}([0,T],\R^d)$ be controlled by $\omega$, and let $f\in \mathcal{V}^{\gamma+1,p}(\R^d,\R^e)$ with $\gamma > p \ge 1$. Let $\by_0^1,\by_0^2\in G^{[p]}(\R^e)$ and let $\by^1$ and $\by^2$ be the unique solutions to the full RDEs $$\dd\by^i_t = f(y^i_t) \sdd\bx^i_t,\qquad \by^i_0 = \by^i_0.$$ Define the rough integrals $$\bm{h}^i \coloneqq \int f'(y^i)\sdd\bx^i,\qquad i=1,2.$$ Then, there exists a constant $C = C(p,\gamma,f,\|\by_0^1\| + \|\by_0^2\| + \omega(0,T))$ that is increasing in the last component, such that $$\rho_{p,\omega}(\bm{h}^1,\bm{h}^2) \le C\left(\|y^1_0-y^2_0\| + \rho_{p,\omega}(\bx^1,\bx^2)\right).$$
\end{lemma}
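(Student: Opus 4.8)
The plan is to reduce the statement, just as in the proof of Lemma~\ref{lem:RoughIntegralWellDefined1}, to a Lipschitz estimate for a rough integral against a fixed bounded one-form, and then to combine the Lipschitz estimate for RDEs (Theorem~\ref{thm:RDEUniqueness}) with the corresponding Lipschitz estimate for rough integrals, \cite[Corollary~10.48]{friz2010multidimensional}. Recall from the proof of Lemma~\ref{lem:RoughIntegralWellDefined1} that, writing $\bz^i \coloneqq (\bx^i,\by^i)$ for the joint rough path and $g(x,y) = (f'(y),0)$ for the associated one-form on $\R^{d+e}$, one has $\bm{h}^i = \int g(\bz^i)\sdd\bz^i$. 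Thus it suffices to show that $\bz^i$ depends Lipschitz-continuously on $(y_0^i,\bx^i)$, and that the rough integral depends Lipschitz-continuously on its driving path.

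First I would fix a common control. By Theorem~\ref{thm:RDEExistence} there is a constant $C_0$, depending only on $p,\gamma,f$ and $\|\by_0^1\|+\|\by_0^2\|+\omega(0,T)$, with $\|\by^i\|_{p\var;[s,t]} \le C_0\|\bx^i\|_{p\var;[s,t]}$ and $\|\by^i\|_\infty \le C_0$. Hence $\bx^i$, $\by^i$, and therefore $\bz^i$, are all controlled by $\bar\omega \coloneqq (1+C_0^p)\,\omega$, while $\|z^i\|_\infty \le R$ for an $R$ of the same type. Since $g$ is only ever evaluated on $B_R$, Theorem~\ref{thm:WhitneysTheoremStein} lets us replace $g$ by a bounded one-form $g_3 \in L(\R^{d+e},\lip^{\gamma}(\R^{d+e},\R^{e^2}))$ agreeing with $g$ on $B_R$, which leaves $\bm{h}^i$ unchanged. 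It is exactly here that the hypothesis $f\in\mathcal{V}^{\gamma+1,p}$ --- one more derivative than is assumed in Lemma~\ref{lem:RoughIntegralWellDefined1} --- enters: it guarantees $f'\in\lip^\gamma_\loc$, so that $g_3$ is genuinely $\lip^\gamma$ with $\gamma>p$, which is the regularity the rough-integral Lipschitz estimate requires.

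Next, Theorem~\ref{thm:RDEUniqueness}, applied with the control $\bar\omega$ (legitimate since $\bx^i$ is controlled by $\omega\le\bar\omega$), gives $\rho_{p,\bar\omega}(\by^1,\by^2) \le C\big(\|y_0^1-y_0^2\| + \rho_{p,\bar\omega}(\bx^1,\bx^2)\big)$, and since the metric on the product group is dominated by the metrics on the factors (and $x_0^i$ may be taken to be $0$), $\rho_{p,\bar\omega}(\bz^1,\bz^2) \le C\big(\|y_0^1-y_0^2\| + \rho_{p,\bar\omega}(\bx^1,\bx^2)\big)$. Applying \cite[Corollary~10.48]{friz2010multidimensional} to the bounded one-form $g_3$ and the $\bar\omega$-controlled rough paths $\bz^1,\bz^2$ then yields $\rho_{p,\bar\omega}(\bm{h}^1,\bm{h}^2) \le C\,\rho_{p,\bar\omega}(\bz^1,\bz^2)$, with a constant depending only on $\|g_3\|_{\lip^\gamma}$ and $\bar\omega(0,T)$, hence only on the admissible parameters. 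Chaining the two bounds and converting $\bar\omega=(1+C_0^p)\omega$ back to $\omega$ (which costs only a factor depending on $C_0$ and $p$, absorbed into $C$) gives the claim; note that only $\|y_0^1-y_0^2\|$, and not $\|\by_0^1-\by_0^2\|$, appears, because $\bm{h}^i$ depends on the initial datum through its first level alone.

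The main obstacle is purely bookkeeping: one has to make sure a single control $\bar\omega$, comparable to $\omega$ in the sense $\bar\omega(0,T)\le C\omega(0,T)$, simultaneously dominates $\bx^i$, $\by^i$ and $\bz^i$, so that Theorem~\ref{thm:RDEUniqueness} and \cite[Corollary~10.48]{friz2010multidimensional} can be combined with a constant of the stipulated form, and one has to check that localizing $g$ to $g_3$ introduces no dependence beyond $p,\gamma,f$ and $\|\by_0^1\|+\|\by_0^2\|+\omega(0,T)$. As an alternative to invoking \cite[Corollary~10.48]{friz2010multidimensional}, one may apply Theorem~\ref{thm:RDEUniqueness} a second time, to the RDE $\dd\bm{v}^i = f_4(v^i)\sdd\bz^i$ whose solution is $\bm{v}^i=(\bz^i,\bm{h}^i)$ --- built as in the proof of Lemma~\ref{lem:RoughIntegralWellDefined1} with $f'$ replaced by the bounded $f_3$ --- and then project onto the $\bm{h}$-component; this route again uses $f\in\mathcal{V}^{\gamma+1,p}$, this time to ensure $f_4\in\mathcal{V}^{\gamma,p}$ with $\gamma>p$, as Theorem~\ref{thm:RDEUniqueness} requires.
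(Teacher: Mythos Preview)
Your proposal is correct and matches the paper's approach. The paper's proof is the one-line ``follows immediately from Lemma~\ref{lem:RoughIntegralWellDefined1} and Theorem~\ref{thm:RDEUniqueness}'', and the intended expansion is precisely your alternative route: chain $\rho_{p,\omega}(\bm h^1,\bm h^2)\le \rho_{p,\omega}(\bm v^1,\bm v^2)\le C(\|v_0^1-v_0^2\|+\rho_{p,\omega}(\bz^1,\bz^2))\le C(\|y_0^1-y_0^2\|+\rho_{p,\omega}(\bx^1,\bx^2))$ by applying Theorem~\ref{thm:RDEUniqueness} twice, with the extra regularity $f\in\mathcal V^{\gamma+1,p}$ used exactly where you say, to get $f_4\in\mathcal V^{\gamma,p}$.

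One small caution on your primary route: \cite[Corollary~10.48]{friz2010multidimensional} is stated as a continuity result (this is how the paper uses it in Lemma~\ref{lem:RoughIntegralWellDefined2}), not as a $\rho_{p,\omega}$-Lipschitz estimate per se. Of course, since that corollary is itself obtained by rewriting the rough integral as an RDE and invoking the RDE Lipschitz bound, unpacking it gives exactly what you need---but at that point you have simply arrived at your ``alternative'' route, which is the paper's argument.
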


\begin{proof}
This also follows immediately from Lemma \ref{lem:RoughIntegralWellDefined1} and Theorem \ref{thm:RDEUniqueness}.
\end{proof}

\begin{lemma}\label{lem:FullRDEAndFirstLevelRDEWithFullVF}
Let $f\in \mathcal{V}^{\gamma-1,p}(\R^d,\R^e)$, where $\gamma > p\ge 1$, let $\bx\in C^{p\var}([0,T],\R^d)$ and let $\by_0\in G^{[p]}(\R^e)$. Then, $\by$ is a solution to the full RDE $$\dd\by_t = f(y_t) \sdd\bx_t,\qquad \by_0 = \by_0,$$ if and only if it is a solution to the first-level RDE $$\dd\by_t = \bm{f}(\by_t) \sdd\bx_t,\qquad \by_0 = \by_0,$$ for $\bm{f}$ as defined in Definition \ref{def:FullVectorField}.
\end{lemma}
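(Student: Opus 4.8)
The statement is essentially a bookkeeping identity: the "full vector field" $\bm{f}$ on the truncated group $G^{[p]}(\R^e)$ (Definition \ref{def:FullVectorField}) is precisely engineered so that a first-level RDE driven by $\bm{f}$ reproduces the full RDE driven by $f$. So the plan is to unwind both notions of solution through the smooth-path approximation that defines RDE solutions (Appendix \ref{sec:RDESolutions}), and check that the two coincide along any approximating sequence, then pass to the limit. More concretely, I would proceed as follows.

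First, reduce to the case of finite variation driving paths. By the definition of a (full) RDE solution, $\by$ solves $\dd\by_t = f(y_t)\sdd\bx_t$ with $\by_0 = \by_0$ iff there is a sequence $(x^n)$ of finite variation paths with $S_{[p]}(x^n) \to \bx$ in the sense of \eqref{eqn:ConvergenceToWeakGRP} and corresponding solutions $\by^n$ (of the full RDEs driven by $x^n$, started suitably) converging uniformly to $\by$. The same sequence $(x^n)$ is available for the first-level RDE driven by $\bm{f}$. Hence it suffices to show: (i) for finite variation $x$, the full RDE solution driven by $f$ coincides with the first-level RDE solution driven by $\bm{f}$; and (ii) the limiting procedures match up, which is automatic once (i) holds, because both limits are taken along the same sequence and both limit objects are, by definition, the respective RDE solutions — uniqueness (Theorem \ref{thm:RDEUniqueness}, applicable since $f \in \mathcal{V}^{\gamma-1,p}$ with $\gamma > p$, and $\bm{f}$ inherits the required regularity and $p$ non-explosion, e.g. via the extension lemmas of this section) then pins down the limit.

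Second, handle the finite variation case. For a smooth (finite variation) path $x$, the full RDE solution $\by$ is the canonical lift $S_{[p]}(y)$ of the ODE solution $y$ of $\dd y_t = f(y_t)\sdd x_t$ (this is the content of Lemma \ref{lem:RDESolutionIsODESolution} together with the definition of the full solution). On the other hand, the first-level RDE driven by $\bm{f}$ reduces, again by Lemma \ref{lem:RDESolutionIsODESolution} applied to the vector field $\bm{f}$, to the ODE $\dd\by_t = \bm{f}(\by_t)\sdd x_t$ in the manifold $G^{[p]}(\R^e)$. The key computation — and the only place where anything happens — is to check that the curve $t \mapsto S_{[p]}(y)_t$ solves exactly this ODE: differentiating the truncated signature $S_{[p]}(y)_{0,t}$ and using $\dd y_t = f(y_t)\sdd x_t$ produces, level by level, precisely the components of $\bm{f}(S_{[p]}(y)_t)\,\dd x_t$, which is how $\bm{f}$ is defined in Definition \ref{def:FullVectorField}. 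By uniqueness of ODE solutions (the vector field $\bm{f}$ is locally Lipschitz of the right order, after the usual restriction-and-extension via Theorem \ref{thm:WhitneysTheoremStein}), the two solutions agree.

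The main obstacle is purely notational rather than conceptual: one must carefully match the algebraic definition of $\bm{f}$ (how it acts on an element of $G^{[p]}(\R^e)$, tensored against increments of $x$) with the derivative of the truncated signature along $y$, keeping the truncation at level $[p]$ straight and making sure the non-explosion / regularity hypotheses transfer to $\bm{f}$ so that all the existence-uniqueness machinery invoked above (Theorems \ref{thm:RDEExistence}, \ref{thm:RDEUniqueness}, and Lemma \ref{lem:RDESolutionIsODESolution}) genuinely applies to the first-level RDE driven by $\bm{f}$. Once the finite variation identity is established and the regularity of $\bm{f}$ is confirmed, the general statement follows by the density/continuity argument with no further work.
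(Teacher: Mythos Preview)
Your overall strategy --- reduce to finite variation driving paths and verify the identity at the ODE level --- is exactly what the paper does (the paper even short-circuits the forward direction by citing \cite[Theorem~10.35]{friz2010multidimensional}). However, there is a genuine gap in your execution of the converse direction and of step (ii).

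You invoke uniqueness of ODE solutions for the nonlinear equation $\dd\by_t = \bm{f}(\by_t)\sdd x_t$, asserting that $\bm{f}$ is ``locally Lipschitz of the right order'', and you invoke Theorem~\ref{thm:RDEUniqueness} to pin down the limit. But the hypothesis is only $f\in\mathcal{V}^{\gamma-1,p}$ with $\gamma>p\ge 1$, so $\gamma-1$ can be strictly less than $1$ (take $p=1$, $\gamma=3/2$); then $f$ and hence $\bm{f}$ are merely H\"older, not Lipschitz, and neither ODE uniqueness nor Theorem~\ref{thm:RDEUniqueness} (which requires $f\in\mathcal{V}^{\gamma,p}$, one more degree of regularity) is available. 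In this regime the RDEs may genuinely have multiple solutions, and the lemma is asserting a bijection between solution \emph{sets}, not equality of unique solutions.

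The fix is the structural observation the paper uses: if $\by$ solves the ODE $\dd\by_t=\bm{f}(\by_t)\sdd x_t$, then with $y\coloneqq\pi_1(\by)$ the first level reads $\dd y_t=f(y_t)\sdd x_t$, and the full equation becomes $\dd\by_t=\by_t\otimes \dd y_t$. For \emph{fixed} $y$ this is a linear ODE in $\by$, which always has the unique solution $\by_t=\by_0\otimes S_{[p]}(y)_{0,t}$, with no regularity on $f$ needed beyond what is assumed. This gives the ODE-level bijection directly. Once that bijection is established, the RDE-level equivalence follows immediately from the definitions of the two solution notions (both are phrased via approximating finite-variation sequences), so no RDE uniqueness and no separate continuity argument is needed for your step~(ii).
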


\begin{proof}
By a simple localization argument we may assume that $f$ is bounded.

Now, let $\by$ be a solution to the full RDE with respect to $f$. By \cite[Theorem 10.35]{friz2010multidimensional}, $\by$ is a solution to the first-level RDE with respect to $\bm{f}$.

To prove the converse statement, let $\by$ be a solution to the first-level RDE with respect to $\bm{f}$. By definition, there exists a sequence $(x^n)$ of finite variation paths with $$\|S_{[p]}(x^n) - \bx\|_\infty\to 0,\qquad \sup_n\|S_{[p]}(x^n)\|_{p\var} < \infty$$ and ODE solutions $(\by^n)$ to $$d\by^n_t = \bm{f}(\by^n_t)dx^n_t,\qquad \by^n_0 = \by_0$$ with $$\|\by^n-\by\|_\infty\to 0.$$ Denoting $y^n\coloneqq \pi_1(\by^n)$, the specific structure of $\bm{f}$ shows that $\by^n = \by_0\otimes S_{[p]}(y^n)$. But this immediately implies that $\by$ is a solution to the full RDE with respect to $f$.
\end{proof}

\begin{lemma}\label{lem:NonExplosionExtendsToFullRDEs}
Let $f\in\mathcal{V}^{\gamma-1,p}(\R^d,\R^e)$, where $\gamma > p \ge 1$. Then, the associated full vector field $\bm{f}$ also satisfies $\bm{f}\in \mathcal{V}^{\gamma-1,p}(\R^d,T_1^{[p]}(\R^e)).$
\end{lemma}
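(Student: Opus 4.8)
The plan is to reduce the statement to the first-level RDE characterization of the $p$ non-explosion condition provided by Proposition \ref{prop:PNonExplosionFirstLevel}, and then to use Lemma \ref{lem:FullRDEAndFirstLevelRDEWithFullVF} to identify the solutions of the first-level RDE driven by $\bm{f}$ with the full solutions of the RDE driven by $f$. First I would note that $\bm{f} \in \mathcal{V}^{\gamma-1}(\R^d, T_1^{[p]}(\R^e))$ holds by the definition of the full vector field (Definition \ref{def:FullVectorField}), so the only thing to check is the $p$ non-explosion condition itself. By Proposition \ref{prop:PNonExplosionFirstLevel} (applied with $\R^e$ replaced by the space $T_1^{[p]}(\R^e)$, viewed as a Euclidean space), it suffices to show that for every $R > 0$ there is an $M > 0$ such that every solution $\bm{z}$ of the \emph{first-level} RDE $\dd \bm{z}_t = \bm{f}(\bm{z}_t)\sdd\bx_t$, $\bm{z}_0 = \bm{z}_0 \in T_1^{[p]}(\R^e)$, with $\|\bm{z}_0\| + \|\bx\|_{p\var} \le R$, satisfies $\|\bm{z}\|_\infty \le M$.

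The key step is the following. Given such a solution $\bm{z}$, by Lemma \ref{lem:FullRDEAndFirstLevelRDEWithFullVF} it is in fact a \emph{full} RDE solution driven by $f$ with initial value $\bm{z}_0$; equivalently, writing $y = \pi_1(\bm{z})$, we have that $y$ solves the first-level RDE $\dd y_t = f(y_t)\sdd\bx_t$ with $y_0 = \pi_1(\bm{z}_0)$, and $\bm{z}$ is the canonical lift of $y$ onto the grouplike elements (up to the fixed lower-order terms in $\bm{z}_0$), as spelled out in the proof of Lemma \ref{lem:FullRDEAndFirstLevelRDEWithFullVF}. Now $\|\pi_1(\bm{z}_0)\| \le \|\bm{z}_0\| \le R$, so the hypothesis that $f$ satisfies the $p$ non-explosion condition (via Proposition \ref{prop:PNonExplosionFirstLevel}, so only the first-level RDE is needed) yields $\|y\|_\infty \le M_f(R)$ for a constant $M_f(R)$ depending only on $f$ and $R$. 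Since the higher levels of $\bm{z}$ are built from $\bm{z}_0$ and the increments of $S_{[p]}(y)$, and since $y$ has bounded sup norm together with bounded $p$-variation (the latter by Theorem \ref{thm:RDEExistence}, which gives $\|y\|_{p\var} \le C\|\bx\|_{p\var} \le CR$), all levels of $\bm{z}$ are bounded by a constant depending only on $f$ and $R$; call it $M$. This gives $\|\bm{z}\|_\infty \le M$, and Proposition \ref{prop:PNonExplosionFirstLevel} finishes the proof.

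The main obstacle is purely bookkeeping: making precise that a first-level RDE solution driven by $\bm{f}$ really is (the lift of) a full RDE solution driven by $f$, which is exactly the content of Lemma \ref{lem:FullRDEAndFirstLevelRDEWithFullVF}, and then verifying that the higher tensor levels of such a lift are controlled quantitatively in terms of $\|y\|_\infty$ and $\|y\|_{p\var}$ — both of which are already bounded in terms of $R$. One subtlety to handle carefully is that the initial value $\bm{z}_0 \in T_1^{[p]}(\R^e)$ need not be grouplike, so $\bm{z}$ is $\bm{z}_0 \otimes S_{[p]}(y)_{0,\cdot}$ rather than $S_{[p]}(y)$ itself; but since $\|\bm{z}_0\| \le R$, this only changes the bound on $\|\bm{z}\|_\infty$ by a multiplicative constant depending on $R$, and the argument goes through unchanged.
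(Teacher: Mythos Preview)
Your approach is essentially the same as the paper's, which records the lemma as a consequence of Lemma~\ref{lem:FullRDEAndFirstLevelRDEWithFullVF}. One imprecision is worth flagging: Lemma~\ref{lem:FullRDEAndFirstLevelRDEWithFullVF} is stated only for grouplike initial values $\by_0\in G^{[p]}(\R^e)$, so it does not directly apply when $\bm{z}_0\in T_1^{[p]}(\R^e)\setminus G^{[p]}(\R^e)$; and for a genuinely rough first-level solution $y$, the object ``$S_{[p]}(y)_{0,\cdot}$'' is not defined, so your final paragraph's fix is not quite literal. The clean way to close this (implicit in the paper) is to pick a grouplike $\widetilde{\by}_0\in G^{[p]}(\R^e)$ with $\pi_1(\widetilde{\by}_0)=\pi_1(\bm{z}_0)$ and $\|\widetilde{\by}_0\|$ controlled by $R$ (for instance, the signature of the straight line from $0$ to $\pi_1(\bm{z}_0)$), set $\widetilde{\by}_t:=\widetilde{\by}_0\otimes\bm{z}_0^{-1}\otimes\bm{z}_t$, and check from the structure of $\bm{f}$ that $\widetilde{\by}$ is a genuine full RDE solution for $f$ with grouplike initial value $\widetilde{\by}_0$. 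The $p$ non-explosion of $f$ then bounds $\|\widetilde{\by}\|_\infty$ directly, and $\|\bm{z}\|_\infty=\|\bm{z}_0\otimes\widetilde{\by}_0^{-1}\otimes\widetilde{\by}\|_\infty$ is bounded in terms of $R$. This replaces the detour through $\|y\|_{p\var}$ and the undefined $S_{[p]}(y)$.
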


\begin{proof}
This is a consequence of Lemma \ref{lem:FullRDEAndFirstLevelRDEWithFullVF}.
\end{proof}

\section{Main results}\label{sec:MainResults}

\subsection{Error representation formula}

In this section, we follow the dual representation approach of \cite{moon2003variational}.

Let $\bx\in C^{p\var,0}([0,T],\R^d)$, and let $f\in\mathcal{V}^{\gamma,p}(\R^d,\R^e)$, where $\gamma > p\ge 1$. Let $\by_0\in G^{[p]}(\R^e)$ and consider the full rough differential equation \eqref{eqn:FullRDE}. By Theorem \ref{thm:RDEUniqueness}, there exists a unique full RDE solution $\by$. Let $\overline{\by}$ be a numerical approximation of $\by$ satisfying the same initial condition $\overline{\by}_0 = \by_0$ with time steps $$0 = t_0 < \dots < t_n = T.$$ The approximation $\overline{\by}$ only needs to be defined on the time grid $(t_k)_{k=0}^n$, and it needs to take values in $G^{[p]}(\R^e)$. For $t\in[0,T]$ and $\bz\in G^{[p]}(\R^e)$, define the flow $\by(s; t, \bz)$ as the solution to $$\dd\by(s;t,\bz) = f(y(s;t,\bz)) \sdd\bx_s,\quad t\le s \le T,\quad \by(t;t,\bz) = \bz.$$ Again, the solution $\by(.; t,\bz)$ exists and is unique for all $(t,\bz)$ by Theorem \ref{thm:RDEUniqueness}. For a given function $g:\R^e \to \R^c$, define the function $u:[0,T]\times \R^e\to \R^c$ by $$u(t, z) = g(y(T; t, z)).$$ Finally, define the local error $\bm{e}$ by $$\bm{e}(t_k) = \by(t_k;t_{k-1},\overline{\by}_{t_{k-1}}) - \overline{\by}_{t_k}.$$

Recall that we denote by normal, not bold letters the projections to the first level. The following theorem considers only the first level $y = \pi_1(\by)$ of the solution.
 
\begin{theorem}\label{thm:RDEExactErrorRepresentationInLocalErrors}\cite[cf. Theorem 2.1]{moon2003variational}
Let $\bx\in C^{p\var,0}([0,T],\R^d)$, let $f\in\mathcal{V}^{\gamma,p}(\R^d,\R^e)$ with $\gamma > p\ge 1$, and let $y_0\in \R^e$. Furthermore, let $g\in C^2(\R^e, \R^c)$ be twice continuously differentiable. Let $y$ be the unique solution to the first-level RDE \eqref{eqn:FirstLevelRDE}, and assume that the approximation $\overline{y}$ is continuous as a map of the rough path $\bx$ in the $p$-variation topology. Then, the global error is a weighted sum of the local errors $e(t_k)$ given by $$g(y_T) - g(\overline{y}_T) = \sum_{k=1}^n \left(\int_0^1 \Psi\left(t_k,\overline{y}_{t_k} + se(t_k)\right)\sdd s\right) e(t_k),$$ where the function $\Psi(t,z)\in\R^c\times\R^e$ is the first variation of $u$ in the sense that $$\Psi(t,z) = \nabla_z u(t,z),\quad \text{i.e.}\quad \Psi_{i,j}(t,z) = \partial_{z_j}u_i(t,z).$$ The weight function $\Psi$ satisfies, for $t<s<T$, the dual equation
\begin{align*}
-\dd\Psi(s,y(s;t,z))& = \Psi(s,y(s;t,z))f'(y(s;t,z))\sdd\bx_s,\\
\Psi(T,y(T;t,z)) &= (\nabla g)(y(T; t, z)).
\end{align*}
\end{theorem}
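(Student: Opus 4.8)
The plan is to follow the telescoping-plus-fundamental-theorem-of-calculus argument of \cite[Theorem 2.1]{moon2003variational}, replacing classical ODE flow theory by the rough-path flow calculus (invoking \cite[Chapter 11]{friz2010multidimensional} together with the unbounded-vector-field extensions of Section \ref{sec:MinorExtensions}).

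First I would record the two boundary identities for $u(t,z) = g(y(T;t,z))$: since $y(T;T,z) = z$ we have $u(T,\overline{y}_T) = g(\overline{y}_T)$, and since $\overline{y}_0 = y_0$ and $y(\cdot\,;0,y_0) = y$ is the true solution, $u(0,\overline{y}_0) = g(y_T)$. Hence
\[
g(y_T) - g(\overline{y}_T) = \sum_{k=1}^n \big( u(t_{k-1},\overline{y}_{t_{k-1}}) - u(t_k,\overline{y}_{t_k}) \big).
\]
The key structural fact is the flow (semigroup) property $y(T;t_{k-1},z) = y\big(T;t_k,y(t_k;t_{k-1},z)\big)$, which yields $u(t_{k-1},z) = u\big(t_k,y(t_k;t_{k-1},z)\big)$; applying it at $z = \overline{y}_{t_{k-1}}$ and using the definition of the local error, $y(t_k;t_{k-1},\overline{y}_{t_{k-1}}) = \overline{y}_{t_k} + e(t_k)$, each summand becomes $u(t_k,\overline{y}_{t_k} + e(t_k)) - u(t_k,\overline{y}_{t_k})$. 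A first-order Taylor expansion in the spatial variable then gives
\[
u(t_k,\overline{y}_{t_k} + e(t_k)) - u(t_k,\overline{y}_{t_k}) = \Big( \int_0^1 \nabla_z u\big(t_k,\overline{y}_{t_k} + s\,e(t_k)\big)\sdd s \Big) e(t_k),
\]
which is the claimed formula with $\Psi = \nabla_z u$. For this step one needs $z \mapsto y(T;t,z)$ to be $C^1$ with continuous derivative, i.e.\ differentiability of the It\^o--Lyons map in the initial condition; this holds for $f \in \mathcal{V}^{\gamma,p}$ with $\gamma > p$ after the usual localization, and together with $g \in C^2$ gives $u \in C^1$ in $z$.

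It then remains to identify the backward equation for $\Psi$. The terminal condition is immediate: $\Psi(T,z) = \nabla_z g(y(T;T,z)) = (\nabla g)(z)$, so $\Psi(T,y(T;t,z)) = (\nabla g)(y(T;t,z))$. For the dynamics, fix $(t,z)$, write $\phi_s = y(s;t,z)$, and let $J_{s \leftarrow t} = \nabla_z \phi_s$ be the Jacobian of the flow; differentiating the RDE in the initial condition shows $J_{\cdot\,\leftarrow t}$ solves the linear RDE $\dd J_{s\leftarrow t} = f'(\phi_s) J_{s\leftarrow t}\sdd\bx_s$, $J_{t\leftarrow t} = \id$, whose driving rough integral $\int f'(\phi)\sdd\bx$ is well-defined by Lemma \ref{lem:RoughIntegralWellDefined1}, and $J_{s\leftarrow t}$ is invertible. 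By the chain rule and the flow property, $\Psi(s,\phi_s) = (\nabla g)(\phi_T)\,J_{T\leftarrow t}\,(J_{s\leftarrow t})^{-1}$, where the prefactor is independent of $s$ and equals $\Psi(t,z)$. Differentiating the inverse via the product rule gives $\dd (J_{s\leftarrow t})^{-1} = -(J_{s\leftarrow t})^{-1} f'(\phi_s)\sdd\bx_s$, and hence $-\dd\Psi(s,\phi_s) = \Psi(s,\phi_s) f'(\phi_s)\sdd\bx_s$, as claimed.

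The main obstacle is making the flow calculus of the last paragraph rigorous in the rough setting: justifying differentiability and invertibility of the Jacobian and the product rule for its inverse, and giving meaning to ``$\dd\Psi(s,\phi_s)$'' for $\Psi$ defined a priori only as a spatial gradient. The cleanest route is approximation: first prove the identity for a smooth sequence $x^n$ with $S_{[p]}(x^n) \to \bx$ in $p$-variation, where all objects ($y$, $\overline{y}$ via the continuity hypothesis, the flows, the Jacobians and their inverses) are classical and the computation is exactly the one in \cite{moon2003variational}, and then pass to the limit using the continuity results of Section \ref{sec:MinorExtensions} (in particular Theorem \ref{thm:RDEUniqueness} and Lemmas \ref{lem:RoughIntegralWellDefined1}--\ref{lem:RoughIntegralWellDefined3}) together with the assumed continuity of $\overline{y}$ in the $p$-variation topology; the representation $\Psi(s,\phi_s) = \Psi(t,z)(J_{s\leftarrow t})^{-1}$ and the regularity of $J$ then also supply the continuity in $s$ needed for the dual equation to make sense as a rough differential equation.
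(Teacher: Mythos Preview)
Your proposal is correct and follows the same high-level architecture as the paper: derive the error representation by telescoping along the grid plus the fundamental theorem of calculus (identical to the paper's Step~1), then establish the dual equation first for finite-variation driving paths and pass to the rough limit via the continuity hypotheses on $\overline{y}$ and the stability results of Section~\ref{sec:MinorExtensions} (the paper's Steps~4--5).

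The genuine difference is in how you obtain the dual equation in the smooth case. The paper uses a perturbation argument: it compares two nearby trajectories $y(\cdot;t,z)$ and $y(\cdot;t,z+\delta w)$, expands $u(s,y^2_s)-u(s,y^1_s)$, and extracts the ODE for $\Psi$ by letting $\delta\to 0$. This requires $\Psi$ to be $C^1$ and a second-order Taylor expansion of $f$, so the paper first assumes $f\in L(\R^d,\lip^{\gamma\lor(2+\eps)})$ (Step~2) and then removes the extra regularity by approximating in the vector field (Step~3). Your route via the explicit Jacobian representation $\Psi(s,\phi_s)=(\nabla g)(\phi_T)\,J_{T\leftarrow t}\,(J_{s\leftarrow t})^{-1}$ together with $\dd(J_{s\leftarrow t})^{-1}=-(J_{s\leftarrow t})^{-1}f'(\phi_s)\,\dd x_s$ is more direct: for finite-variation $x$ it only needs $f\in C^1$ and the classical product rule for absolutely continuous matrix-valued functions, so it bypasses the paper's auxiliary Step~3 entirely. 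The trade-off is that you must justify invertibility of $J_{s\leftarrow t}$ and the derivative-of-inverse formula, but for ODEs this is standard (Liouville/Jacobi), and since you carry out this computation only at the smooth level before passing to the limit, no rough product rule is needed.
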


\begin{proof}
Assuming for a moment that $g:\R^e\to \R^e$ is the identity function, we see that $\Psi$ exists by \cite[Theorem 11.6]{friz2010multidimensional}. Hence, $\Psi$ also exists if $g$ is differentiable.

\textbf{Step 1: Error formula for finite variation paths.} Assume first that $x = \bx$ is a finite variation path, and use ordinary differential equations instead of rough differential equations. Using a combination of Lemma \ref{lem:PNonExplosionImpliesQNonExplosion}, Lemma \ref{lem:RDESolutionIsODESolution1RP}, and Theorem \ref{thm:RDEExistence}, together with the fact that we will only consider ODEs with starting values in a bounded set (depending on the finite sequence $(\overline{y}_{t_k})$), we see, using Theorem \ref{thm:WhitneysTheoremStein}, that we can assume without loss of generality that $f\in L(\R^d,\lip^\gamma(\R^e,\R^e))$ is bounded.

By the basic property of the flow $y$, we have
\begin{align*}
u(t_k,y(t_k,t_{k-1},\overline{y}_{t_{k-1}})) &= u(t_{k-1},\overline{y}_{t_{k-1}}).
\end{align*}
Hence, 
\begin{align*}
g(y_T) - g(\overline{y}_T) &= g(y(T;0,y_0)) - g(y(T; T,\overline{y}_T)) = u(0,\overline{y}_0) - u(T,\overline{y}_T)\\
&= \sum_{k=1}^n \left(u(t_k,y(t_k;t_{k-1},\overline{y}_{t_{k-1}})) - u(t_k, \overline{y}_{t_k})\right)\\
&= \sum_{k=1}^n \left(\int_0^1 \Psi\left(t_k,\overline{y}_{t_k} + se(t_k)\right)\sdd s\right) e(t_k),
\end{align*}
where the last line follows from the fundamental theorem of calculus. 

\textbf{Step 2: Dual equation for finite variation paths and smooth vector fields.} It remains to show that $\Psi$ satisfies the above ODE. Assume for now that we even have $f\in L(\R^d,\lip^{\gamma\lor (2+\eps)}(\R^e,\R^e))$. By \cite[Theorem 11.6]{friz2010multidimensional} (together with Lemma \ref{lem:RDESolutionIsODESolution1RP} and the fact that $g$ is twice continuously differentiable), $\Psi$ is therefore continuously differentiable. Consider, for some $w$ and some $\delta \to 0$ two solutions $y^1 \coloneqq y(.;t,z)$ and $y^2 \coloneqq y(.;t,z+\delta w),$ and note that the function $s\mapsto u(s, y^i_s)$ is constant for $i=1,2$. Then, for $s\ge t$,
\begin{align*}
0 &= u(s, y^2_s) - u(s, y^1_s) - \left(u(t, y^2_t) - u(t, y^1_t)\right)\\
&= \Psi(s, y^1_s)(y^2_s-y^1_s) - \Psi(t, y^1_t)(y^2_t-y^1_t) + o(\delta)\\
&= \int_t^s \dd\Psi(r, y^1_r)(y^2_r-y^1_r) + \int_t^s \Psi(r, y^1_r) \left(\dd y^2_r - \dd y^1_r\right) + o(\delta)\\
&= \int_t^s \dd\Psi(r, y^1_r)(y^2_r-y^1_r) + \int_t^s \Psi(r, y^1_r) \left(f(y^2_r) - f(y^1_r)\right)\sdd x_r + o(\delta)\\
&= \int_t^s \dd\Psi(r, y^1_r)(y^2_r-y^1_r) + \int_t^s \Psi(r, y^1_r) f'(y^1_r)\left(y^2_r - y^1_r\right)\sdd x_r + o(\delta)\\
&= \int_t^s \left(\left(\dd\Psi(r, y^1_r) + \Psi(r, y^1_r) f'(y^1_r) \sdd x_r\right)(y^2_r-y^1_r)\right) + o(\delta).
\end{align*}
The second equality follows from the fact that $u$ is twice continuously differentiable (or equivalently that $\Psi$ is continuously differentiable), together with the fact that the $y^1$ and $y^2$ take values in a bounded set, as was discussed previously already. Similarly, the penultimate equality follows from the fact that $f$ is twice continuously differentiable (with bounded derivative), and that $\Psi$ is continuous, and $y^1$ takes values in a bounded set, making $\Psi(r, y_r^1)$ uniformly bounded.

Since the map $z\mapsto y(s; t, z)$ is a diffeomorphism with non-zero derivative, we have that $y^2_s-y^1_s = \Theta(\delta)$ (i.e. it decays not faster and not slower than $\delta$). Hence, dividing by $\delta$ and taking $\delta \to 0$, we get $$0 = \int_t^s \left(\left(\dd\Psi(r, y^1_r) + \Psi(r, y^1_r) f'(y^1_r) \sdd x_r\right)(\nabla_z y(r; t, z) w)\right).$$ Subtracting the same integral equation on a different interval $[t, \tau]$, with $\tau < s$, we get
\begin{align*}
0 &= \int_\tau^s \left(\left(\dd\Psi(r, y^1_r) + \Psi(r, y^1_r) f'(y^1_r) \sdd x_r\right)(\nabla_z y(r; t, z) w)\right)\\
&= \int_\tau^s \left(\left(\dd\Psi(r, y^1_r) + \Psi(r, y^1_r) f'(y^1_r) \sdd x_r\right) w\right)\\
&\qquad + \int_\tau^s \left(\left(\dd\Psi(r, y^1_r) + \Psi(r, y^1_r) f'(y^1_r) \sdd x_r\right)\left((\nabla_z y(r; t, z) - \nabla_z y(t; t, z)) \cdot w\right)\right),
\end{align*}
where we used that $\nabla_z y(t; t, z)$ is the identity. Since $f$ is twice continuously differentiable, $\tau\mapsto \nabla_z y(\tau; t, z)$ is Lipschitz continuous, with a Lipschitz constant independent of $t$ and $z$. Hence, the second summand above is of order $O((s-\tau)^2)$, compared to $O(s-\tau)$ for the first summand. Taking $\tau\uparrow s$, we obtain the differential equation $$\left(\dd\Psi(s, y^1_s) + \Psi(s, y^1_s) f'(y^1_s) \sdd x_s\right) w = 0.$$ Since this holds for all $w$, we have shown the differential equation in the theorem.

\textbf{Step 3: Dual equation for finite variation paths and all vector fields.} Now, assume that we only have $f\in L(\R^d,\lip^\gamma(\R^e,\R^e))$. We have to prove that $\Psi$ still satisfies the above ODE. We can find an approximating sequence $(f^n)$ in $L(\R^d,\lip^{\gamma\lor(2+\eps)}(\R^e,\R^e))$ converging to $f$ in $L(\R^d,\lip^\gamma(\R^e,\R^e))$. By \cite[Proposition 4.6]{friz2010multidimensional} together with \cite[Theorem 3.15]{friz2010multidimensional}, $\Psi^n(t,z)\to \Psi(t,z)\coloneqq \nabla_z u(t,z) = \nabla g(y(T; t,z)) \nabla_z y(T; t,z)$ uniformly, uniformly in $z$ for $z$ in a bounded set, since $\gamma > 1$.

By \cite[Theorem 3.15]{friz2010multidimensional}, the associated solutions $(y^n(.; t, z))_n$ satisfy $$\|y^n(.; t, z) - y(.; t, z)\|_\infty \le C\|f^n - f\|_\infty.$$ Therefore,
\begin{align*}
\|(f^n)'(y^n(.; t, z)) - f'(y(.; t, z))\|_\infty &\le \|(f^n)'(y^n(.; t, z)) - f'(y^n(.; t, z))\|_\infty\\
&\qquad + \|f'(y^n(.; t, z)) - f'(y(.; t, z))\|_\infty\\
&\le \|f^n-f\|_{\lip^\gamma}\\
&\qquad + C\|y^n(.; t, z) - y(.; t, z)\|_\infty^{(\gamma-1)\land 1}\\
&\le C\left(\|f^n-f\|_{\lip^\gamma}\lor\|f^n-f\|_{\lip^\gamma}^{\gamma-1}\right).
\end{align*}

Since $f'$ is bounded, the path $$h^{t,z}_s \coloneqq \int_t^s f'(y(r; t, z)) \sdd x_r$$ is well-defined, and again of finite variation. We have
\begin{align*}
\|h^{t,z}_u - h^{t,z}_s - (h^{n,t,z}_u - h^{n,t,z}_s)\| &= \left\|\int_s^u\left(f'(y(r;t,z)) - (f^n)'(y^n(r;t,z))\right)\sdd x_r\right\|\\
&\le C\left(\|f^n-f\|_{\lip^\gamma}\lor\|f^n-f\|_{\lip^\gamma}^{\gamma-1}\right)\|x\|_{1\var;[s,u]}.
\end{align*}
Since the right-hand side is a control function, we conclude that $h^{n,t,z}\to h^{t,z}$ in $1$-variation.

Since $h^{t,z}$ is of finite variation, the differential equation $$-\dd\Phi^{t,z}(s) = \Phi^{t,z}(s) \sdd h^{t,z}_s,\qquad \Phi^{t,z}(T) = (\nabla g)(y(T; t,z))$$ admits a unique solution, which we call $\Phi^{t,z}$. Of course, $\Psi^{n,t,z}$ satisfy similar differential equations. Then, by \cite[Corollary 3.20]{friz2010multidimensional}, $\Psi^{n,t,z}\to \Phi^{t,z}$ in $1$-variation, and hence pointwise. We conclude by noting that $$\Phi^{t,z}(s) = \lim_{n\to\infty} \Psi^{n,t,z}(s) = \lim_{n\to\infty} \Psi^n(s, y^n(s; t, z)) = \Psi(s, y(s; t, z)) =: \Psi^{t,z}(s).$$ This proves the theorem for finite variation paths.

\textbf{Step 4: Error formula for rough paths.} Let $\bx\in C^{p\var,0}([0,T],\R^d)$, and let $(x^n)$ be a sequence of finite variation paths converging to $\bx$ in $p$-variation. We define everything that has been defined for $\bx$ also for $x^n$ by adding a superscript $n$, for example, $y^n$ is the solution to $$\dd y^n_t = f(y^n_t) \sdd x^n_t,\quad y^n_0 = y_0.$$ Since $f\in \mathcal{V}^{\gamma,p}(\R^d,\R^e)$, $(y^n)$ converges to $y$ in $p$-variation by Theorem \ref{thm:RDEUniqueness}. Also, since the approximation scheme is continuous in the $p$-variation topology, $(\overline{y}^n)$ converges to $\overline{y}$. Therefore, also $(e^n)$ converges to $e$.

Now, by \cite[Theorem 11.12]{friz2010multidimensional}, $\Psi$, the first variation of $u$, depends continuously on the rough path $\bx$ in the $p$-variation topology. Hence, as $(x^n)$ converges to $\bx$ in $p$-variation, we have $\Psi^n\to \Psi$ pointwise. Moreover, also by \cite[Theorem 11.12]{friz2010multidimensional}, we see that $(\Psi^n)$ can be uniformly bounded on compact sets, uniformly over $n$. By the dominated convergence theorem,
\begin{align*}
\bigg\|\int_0^1\big(\Psi^n(t_k,\overline{y}^n_{t_k} &+ se^n(t_k)) - \Psi(t_k,\overline{y}_{t_k} + se(t_k))\big)\sdd s\bigg\|\\
&\le \int_0^1\left\|\Psi^n(t_k,\overline{y}^n_{t_k} + se^n(t_k))-\Psi(t_k,\overline{y}_{t_k} + se(t_k))\right\|\sdd s\\
&\le \int_0^1\left\|\Psi^n(t_k,\overline{y}^n_{t_k} + se^n(t_k))-\Psi(t_k,\overline{y}^n_{t_k} + se^n(t_k))\right\|\sdd s\\
&\qquad + \int_0^1\left\|\Psi(t_k,\overline{y}^n_{t_k} + se^n(t_k))-\Psi(t_k,\overline{y}_{t_k} + se(t_k))\right\|\sdd s\\
&\to 0
\end{align*}
Combining this with the convergence of $y$, $\overline{y}$ and $e$, we see that the error representation formula also holds for geometric rough paths $\bx$.

\textbf{Step 5: Dual equation for rough paths.} It remains to show that $\Psi$ still satisfies the dual equation. To see that, fix $(t, z)$, and define $\Psi^{t,z}(s) \coloneqq \Psi(s, y(s, t, z))$. Also, define the rough path $$\bm{h}^{t,z} \coloneqq \int f'(y(r;t,z))\sdd\bx_r.$$ This rough path is well-defined by Lemma \ref{lem:RoughIntegralWellDefined1}. Moreover, since $(x^n)$ converges to $\bx$ in $p$-variation, we have that the lifts $(S_{[p]}(h^{n,t,z}))$ of $(h^{n,t,z})$ (which are defined similarly) also converge to $\bm{h}^{t,z}$ in $p$-variation.

It remains to show that 
\begin{equation}\label{eqn:SomeOtherRDE}
-\dd\Psi^{t,z}(s) = \Psi^{t,z}(s)\sdd\bm{h}^{t,z}_s,\qquad \Psi^{t,z}(T) = (\nabla g)(y(T; t, z)).
\end{equation}
We know that $\Psi^{n,t,z}$ satisfies the same RDE with $\bm{h}^{t,z}$ replaced by $\bm{h}^{n,t,z}$. The vector field in this RDE is obviously linear, and hence in $\mathcal{V}^\gamma(\R^{e^2},\R^{e^2})$. By the bound in \cite[Theorem 10.53]{friz2010multidimensional}, this linear vector field also satisfies the $p$ non-explosion condition. Thus, by Theorem \ref{thm:RDEUniqueness}, there exists a unique solution $\Phi^{t,z}$ to \eqref{eqn:SomeOtherRDE}, and we have that $(\Psi^{n,t,z})$ converges to $\Phi^{t,z}$ in $p$-variation, and hence also pointwise.

It remains to show that $\Phi^{t,z}(s) = \Psi^{t,z}(s)$. We see that $$\Phi^{t,z}(s) = \lim_{n\to\infty} \Psi^{n,t,z}(s) = \lim_{n\to\infty} \Psi^n(s, y(s; t, z)) = \Psi(s, y(s; t, z)) = \Psi^{t, z}(s).$$ This finishes the proof of the theorem.
\end{proof}

Theorem \ref{thm:RDEExactErrorRepresentationInLocalErrors} gives an error representation formula for first-level RDEs. The following corollary gives a similar statement for full RDEs.

\begin{corollary}\label{cor:FullRDEExactErrorRepresentationInLocalErrors}\cite[cf. Theorem 2.1]{moon2003variational}
Let $\bx\in C^{p\var,0}([0,T],\R^d)$, let $f\in\mathcal{V}^{\gamma,p}(\R^d,\R^e)$ with $\gamma > p\ge 1$, and let $\by_0\in G^{[p]}(\R^e)$. Furthermore, let $g:T_1^{[p]}(\R^e)\to \R^c$. Let $\by$ be the unique solution to the full RDE \eqref{eqn:FullRDE}, and assume that the approximation $\overline{\by}$ takes values in $G^{[p]}(\R^e)$ and is continuous as a map of the rough path $\bx$ in the $p$-rough path topology. Then, the global error is a weighted sum of the local errors $\bm{e}(t_k)$ given by $$g(\by_T) - g(\overline{\by}_T) = \sum_{k=1}^n \left(\int_0^1 \bm{\Psi}\left(t_k,\overline{\by}_{t_k} + s\bm{e}(t_k)\right)\sdd s\right) \bm{e}(t_k),$$ where the function $\bm{\Psi}(t,\bz)\in T_1^{[p]}(\R^e)\times T_1^{[p]}(\R^e)$ is the first variation of $u(t, \bz) \coloneqq g(\by(T; t, \bz))$ in the sense that $$\bm{\Psi}(t,\bz) = \nabla_{\bz} u(t,\bz),\quad \text{i.e.}\quad \bm{\Psi}_{i,j}(t,\bz) = \partial_{\bz_j}u_i(t,\bz).$$ The weight function $\bm{\Psi}$ satisfies, for $t<s<T$, the dual equation
\begin{align*}
-\dd\bm{\Psi}(s,\by(s;t,\bz)) &= \bm{\Psi}(s,\by(s;t,\bz))\bm{f}'(\by(s;t,\bz))\sdd\bx_s,\\
\bm{\Psi}(T,\by(T;t,\bz)) &= (\nabla g)(\by(T; t, \bz)).
\end{align*}
This dual equation is, despite its appearance, a first-level $\R^c\times T_1^{[p]}(\R^e)$-valued RDE.
\end{corollary}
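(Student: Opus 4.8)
The plan is to obtain Corollary~\ref{cor:FullRDEExactErrorRepresentationInLocalErrors} as an essentially immediate consequence of Theorem~\ref{thm:RDEExactErrorRepresentationInLocalErrors}, by reading the full RDE \eqref{eqn:FullRDE} as a \emph{first-level} RDE on the finite-dimensional state space $T_1^{[p]}(\R^e)$ driven by the full vector field $\bm{f}$ of Definition~\ref{def:FullVectorField}. First I would identify $T_1^{[p]}(\R^e)$ with a Euclidean space (forgetting the constant scalar component $1$), so that the differences $\bm{e}(t_k) = \by(t_k;t_{k-1},\overline{\by}_{t_{k-1}}) - \overline{\by}_{t_k}$ and the affine combinations $\overline{\by}_{t_k} + s\bm{e}(t_k)$, $s\in[0,1]$, live unambiguously in the ambient vector space. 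I would then define the flow $\by(s;t,\bz)$ for \emph{all} $\bz \in T_1^{[p]}(\R^e)$ (not just group-like ones) as the solution of the first-level RDE $\dd\by(s;t,\bz) = \bm{f}(\by(s;t,\bz))\sdd\bx_s$, $\by(t;t,\bz)=\bz$. By Lemma~\ref{lem:NonExplosionExtendsToFullRDEs} (applied with $\gamma$ there replaced by $\gamma+1$, which is legitimate since $\gamma+1>p$) we have $\bm{f}\in\mathcal{V}^{\gamma,p}(\R^d,T_1^{[p]}(\R^e))$, so Theorem~\ref{thm:RDEUniqueness} makes this flow well-defined and unique, and Lemma~\ref{lem:FullRDEAndFirstLevelRDEWithFullVF} guarantees that on group-like initial data it coincides with the full-RDE flow appearing in the statement. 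In particular $\by_T$, the numerical values $\overline{\by}_{t_k}$, the flowed values $\by(t_k;t_{k-1},\overline{\by}_{t_{k-1}})$, and hence the local errors $\bm{e}(t_k)$, are unchanged by this reinterpretation.

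With this setup, the proof is just an application of Theorem~\ref{thm:RDEExactErrorRepresentationInLocalErrors} with $\R^e$ replaced by $T_1^{[p]}(\R^e)$, $f$ by $\bm{f}$, $y_0$ by $\by_0 \in G^{[p]}(\R^e)\subset T_1^{[p]}(\R^e)$, and $g$ by the given map (which, as in the theorem, we take to be $C^2$). Every hypothesis transfers verbatim: $\bm{f}\in\mathcal{V}^{\gamma,p}$ with $\gamma>p\ge1$, $g\in C^2$, and $\overline{\by}$ takes values in $G^{[p]}(\R^e)$ and is $p$-rough-path continuous in $\bx$ by assumption. The theorem then yields precisely the displayed error representation, with $\bm{\Psi} = \nabla_{\bz} u$ the first variation of $u(t,\bz)=g(\by(T;t,\bz))$ and with $\bm{\Psi}$ satisfying the stated dual equation $-\dd\bm{\Psi}(s,\by(s;t,\bz)) = \bm{\Psi}(s,\by(s;t,\bz))\bm{f}'(\by(s;t,\bz))\sdd\bx_s$, $\bm{\Psi}(T,\by(T;t,\bz)) = (\nabla g)(\by(T;t,\bz))$. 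Existence and continuity of $\bm{\Psi}$ — needed to make sense of $\int_0^1\bm{\Psi}(t_k,\overline{\by}_{t_k}+s\bm{e}(t_k))\sdd s$ — and the justification of the dual equation come for free, being part of the conclusion of Theorem~\ref{thm:RDEExactErrorRepresentationInLocalErrors} applied to $\bm{f}$.

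It remains to justify the last sentence of the statement, that the dual equation is in fact an ordinary first-level $\R^c\times T_1^{[p]}(\R^e)$-valued RDE. Here I would repeat the argument of Step~5 in the proof of Theorem~\ref{thm:RDEExactErrorRepresentationInLocalErrors}: fixing $(t,\bz)$ and applying Lemma~\ref{lem:RoughIntegralWellDefined1} to $\bm{f}$, the rough integral $\bm{h}^{t,\bz}\coloneqq\int\bm{f}'(\by(r;t,\bz))\sdd\bx_r$ is a well-defined $p$-rough path with values in the (finite-dimensional) space of linear endomorphisms of $T_1^{[p]}(\R^e)$; then $\bm{\Psi}^{t,\bz}(s)\coloneqq\bm{\Psi}(s,\by(s;t,\bz))$ solves the \emph{linear} RDE $-\dd\bm{\Psi}^{t,\bz}_s = \bm{\Psi}^{t,\bz}_s\sdd\bm{h}^{t,\bz}_s$, $\bm{\Psi}^{t,\bz}_T = (\nabla g)(\by(T;t,\bz))$, whose solution takes values in the space $\R^c\times T_1^{[p]}(\R^e)$ of linear maps $T_1^{[p]}(\R^e)\to\R^c$. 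A linear vector field is in $\mathcal{V}^\gamma$ and, by \cite[Theorem 10.53]{friz2010multidimensional}, satisfies the $p$ non-explosion condition, so this RDE is well-posed in the usual first-level sense.

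I expect the only genuine work to be the bookkeeping of the first step: fixing the identification of the affine space $T_1^{[p]}(\R^e)$ with a vector space so that $\bm{e}(t_k)$ and the integrand are legitimate objects, and checking that extending the flow $\by(s;t,\bz)$ off the group-like set by means of $\bm{f}$ is the correct (and the only consistent) reading of the statement. After that, Corollary~\ref{cor:FullRDEExactErrorRepresentationInLocalErrors} follows directly from Theorem~\ref{thm:RDEExactErrorRepresentationInLocalErrors} together with Lemmas~\ref{lem:FullRDEAndFirstLevelRDEWithFullVF}, \ref{lem:NonExplosionExtendsToFullRDEs} and \ref{lem:RoughIntegralWellDefined1}.
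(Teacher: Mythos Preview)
Your proposal is correct and follows essentially the same approach as the paper: reinterpret the full RDE as a first-level RDE on $T_1^{[p]}(\R^e)$ with vector field $\bm{f}$, invoke Lemma~\ref{lem:NonExplosionExtendsToFullRDEs} to get $\bm{f}\in\mathcal{V}^{\gamma,p}$, and then apply Theorem~\ref{thm:RDEExactErrorRepresentationInLocalErrors} directly. The paper's proof is terser (it cites \cite[Theorem~10.35]{friz2010multidimensional} in place of your Lemma~\ref{lem:FullRDEAndFirstLevelRDEWithFullVF} and omits the bookkeeping about the affine identification and the extension of the flow to non-group-like $\bz$), but your additional care on these points and on the final sentence about the dual equation is entirely appropriate and fills in details the paper leaves implicit.
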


\begin{proof}
By \cite[Theorem 10.35]{friz2010multidimensional}, the full solution $\by$ is a solution to a first-level $T_1^{[p]}(\R^e)$-valued RDE with vector field $\bm{f}.$ By Lemma \ref{lem:NonExplosionExtendsToFullRDEs}, $\bm{f}$ satisfies the $p$ non-explosion condition. Then, the theorem is a direct corollary of Theorem \ref{thm:RDEExactErrorRepresentationInLocalErrors}.
\end{proof}

\subsection{Admissible algorithms}\label{sec:AdmissibleAlgorithms}

We have now proved an error representation under very general assumptions on the underlying approximation algorithm. However, this error representation is of little use in practice if it cannot be computed. Our next goal will therefore be to give an algorithm to compute this error representation to a sufficent degree of accuracy. To achieve that goal, we need some more conditions on our approximation algorithm. These conditions are given below.

First, we define what an admissible 1-step scheme for RDEs is.

\begin{definition}
An admissible 1-step scheme $A$ is a family $A = (A_N)_{N=1}^\infty$ of (collections of) functions such that for all $d, e, N\ge 1$ the following conditions are satisfied.
\begin{enumerate}
\item The 1-step scheme $A_N$ is a function $$A_N:\mathcal{V}^{N+,N}(\R^d,\R^e)\times \R^e\times G^N(\R^d) \to \R^e.$$
\item Let $p<N$, let $\bx\in C^{p\var,0}([0,T],\R^d)$ be controlled by some control $\omega$, let $f\in L(\R^d,\lip^{N+\eps}(\R^e,\R^e))$, and let $y_0\in U$. Let $y$ be the unique solution to 
\begin{equation}\label{eqn:AlgorithmFirstLevelRDE}
dy_t = f(y_t)d\bx_t,\qquad y_0 = y_0.
\end{equation}
Then, there exists a constant $C = C(N, p, d, e, \|f\|_{\lip^N}, \omega(0,T))$, increasing in the last two parameters, such that $$\|y_T - A_N(f,y_0,S_N(\bx)_{0,T})\| \le C\omega(0,T)^{\frac{N+1}{p}}.$$
\end{enumerate}
\end{definition}

\begin{remark}
Condition 1 simply states that $A_N$ takes as input a vector field, a starting value, and the signature of the rough path, and returns the final value.

Condition 2 guarantees that the approximation is actually ``good'' for bounded vector fields. The constant $C$ may still depend on $\omega(0,T)$, as the error bound may explode for large $\omega(0,T)$ (which is a case we are not interested in, but for correctness this has to be included). The constant must be increasing in $\omega(0,T)$ so that the convergence rate of $\frac{N+1}{p}$ for $\omega(0,T)\to 0$ is not destroyed. Similar considerations hold for $\|f\|_{\lip^N}$. This can be seen since there is a scaling invariance in the differential equation \eqref{eqn:AlgorithmFirstLevelRDE} between $f$ and $\bx$.
\end{remark}

Given a 1-step scheme $A$, we obtain an algorithm for solving an RDE by fixing the degree $N$ and choosing a partition $0=t_0 < \dots < t_n = T$, and then successively applying $$\overline{y}_{t_{i+1}} = A_N(f, \overline{y}_{t_i}, S_N(\bx)_{t_i,t_{i+1}}),$$ where $\overline{y}_0 = y_0.$

The definition of an admissible 1-step scheme $A$ guaranteed that the approximation of $A$ is close to the true solution for bounded Lipschitz vector fields. However, since we work with unbounded vector fields, we need to impose some additional constraints on $A$ to ensure that $A$ still yields accurate approximations for vector fields satisfying non-explosion conditions. In the definition below, the first condition is rather weak and says that the 1-step scheme $A$ depends on $f$ only in a neighbourhood of the starting point $y_0$. The second condition is a (usually) stronger stability condition ensuring that the approximations of the entire algorithm $A$ over the interval $[0, T]$ do not explode. We remark that A- and B-stability which are more commonly found in numerical analysis imply this second stability condition.

\begin{definition}\label{def:LocalAdmissibleAlgorithm}
Let $A$ be an admissible 1-step scheme. $A$ is called local if it additionally satisfies the following two conditions.
\begin{enumerate}
\item For all $d, e, N \ge 1$, and all $f\in \mathcal{V}^{N+,N}(\R^d,\R^e)$, there exists an increasing function $r:[0,\infty)\to [0,\infty)$ such that for all $y\in \R^e$ and all $\bm{g}\in G^N(\R^d)$, $A_N(f, y,\bm{g})$ does not depend on $f$ outside the ball $B$ centered at $0$ with radius $r(\|y\|+ \|\bm{g}\|).$ By that we mean that for all $\widetilde{f}\in\mathcal{V}^{N+,N}(\R^d,\R^e)$ that agree with $f$ on $B$, we have $$A_N(f,y,\bm{g}) = A_N(\widetilde{f},y,\bm{g}).$$
\item Let $d, e, N\ge 1$, $f\in \mathcal{V}^{N+,N}(\R^d,\R^e)$, $\bx\in C^{p\var,0}([0,T],\R^d)$, where $p<N$, and $y_0\in \R^e$. Then, there exist constants $C < \infty$ and $\delta > 0$, such that for all partitions $0 = t_0 < \dots < t_n = T$ of $[0,T]$ with $\sup_k\abs{t_k - t_{k-1}} < \delta$, if we define $$y_k \coloneqq A_N(f,y_{k-1},S_N(\bx)_{t_{k-1},t_k}),$$ then $$\sup_k\|y_k\| \le C.$$
\end{enumerate}
\end{definition}

The following lemma shows that local admissible 1-step schemes give good approximations even for locally Lipschitz vector fields satisfying a non-explosion condition.

\begin{lemma}\label{lem:LocalAdmissibleAlgorithmErrorBound}
Let $A$ be a local admissible 1-step scheme, let $\bx\in C^{p\var}([0,T],\R^d)$ controlled by some control $\omega$, let $f\in \mathcal{V}^{N+,N}(\R^d,\R^e)$ with $N > p$, let $y_0\in \R^e$, and let $y$ be a solution to the RDE \eqref{eqn:AlgorithmFirstLevelRDE}. Then, there exists a constant $C = C(N,p,d,e,f,\omega(0,T)+\|y_0\|)$ increasing in the last component, such that $$\|y_T - A_N(f,y_0,S_N(\bx)_{0,T}))\| \le C\omega(0,T)^{\frac{N+1}{p}}.$$
\end{lemma}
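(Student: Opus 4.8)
The idea is a standard localization argument, just as used repeatedly above. First I would invoke Condition 2 of Definition~\ref{def:LocalAdmissibleAlgorithm} (the stability condition) applied to the \emph{whole} interval $[0,T]$: it gives constants $C_0<\infty$ and $\delta>0$ so that for any partition of $[0,T]$ with mesh smaller than $\delta$, the iterates $y_k = A_N(f,y_{k-1},S_N(\bx)_{t_{k-1},t_k})$ satisfy $\sup_k \|y_k\| \le C_0$. However, I actually need something slightly stronger: a bound on the iterates that holds uniformly over all starting times, i.e. when the algorithm is run on a subinterval $[s,T]$ with initial value in a bounded set. One way around this is to note that the proof of the lemma only needs to compare the \emph{single} quantity $A_N(f,y_0,S_N(\bx)_{0,T})$ to $y_T$; so I would first argue that both $y$ (the true RDE solution, by Theorem~\ref{thm:RDEExistence} and the $N$ non-explosion condition, using $f\in\mathcal V^{N+,N}$) and the one-step output $A_N(f,y_0,S_N(\bx)_{0,T})$ take values in a fixed ball $B_M$ whose radius $M$ depends only on $N,p,d,e,f$ and $\omega(0,T)+\|y_0\|$, and is increasing in the last argument.

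For the true solution, $\|y\|_\infty \le M_1 := M_1(N,f,\omega(0,T)+\|y_0\|)$ follows directly from Theorem~\ref{thm:RDEExistence} (or the $N$ non-explosion condition, valid since $N>p$). For the one-step output: by Condition~1 of Definition~\ref{def:LocalAdmissibleAlgorithm}, $A_N(f,y_0,S_N(\bx)_{0,T})$ depends on $f$ only on the ball $B$ of radius $r(\|y_0\| + \|S_N(\bx)_{0,T}\|)$; since $\|S_N(\bx)_{0,T}\|$ is controlled by $\omega(0,T)$ (up to a dimensional constant depending on $N$), this radius is bounded by some $R = R(N,\omega(0,T)+\|y_0\|)$. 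So the value $A_N(f,y_0,S_N(\bx)_{0,T})$ is unchanged if we replace $f$ by any $\widetilde f$ agreeing with $f$ on $B_R$.

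Now comes the localization proper. Set $M := M_1 \vee R$ and pick, via Theorem~\ref{thm:WhitneysTheoremStein} (Whitney-type extension), a bounded vector field $\widetilde f \in L(\R^d, \lip^{N+\eps}(\R^e,\R^e))$ that agrees with $f$ on $B_M$, with $\|\widetilde f\|_{\lip^{N+\eps}}$ depending only on $f$ and $M$. Then: (i) the true RDE solution $y$ driven by $f$ coincides with the true RDE solution $\widetilde y$ driven by $\widetilde f$ (since $y$ never leaves $B_M \supseteq B_{M_1}$, and by uniqueness from Theorem~\ref{thm:RDEUniqueness}); (ii) $A_N(f,y_0,S_N(\bx)_{0,T}) = A_N(\widetilde f, y_0, S_N(\bx)_{0,T})$ by the locality Condition~1, since $\widetilde f = f$ on $B_R \subseteq B_M$. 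Therefore
\[
\|y_T - A_N(f,y_0,S_N(\bx)_{0,T})\| = \|\widetilde y_T - A_N(\widetilde f, y_0, S_N(\bx)_{0,T})\|,
\]
and since $\widetilde f$ is bounded and $\lip^{N+\eps}$, Condition~2 of the definition of an admissible 1-step scheme applies directly to give the right-hand side $\le C\,\omega(0,T)^{\frac{N+1}{p}}$, where $C = C(N,p,d,e,\|\widetilde f\|_{\lip^N}, \omega(0,T))$. Tracing the dependence of $\|\widetilde f\|_{\lip^N}$ on $M$, and of $M$ on $\omega(0,T)+\|y_0\|$ and $f$, we obtain the claimed constant $C(N,p,d,e,f,\omega(0,T)+\|y_0\|)$, increasing in the last component because $M_1$, $R$ and hence $M$ and $\|\widetilde f\|_{\lip^N}$ are all (monotonically) increasing in $\omega(0,T)+\|y_0\|$, and the admissible-scheme constant is increasing in its last two arguments.

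**Main obstacle.** The delicate point is the bookkeeping of the constants — in particular making sure the extension norm $\|\widetilde f\|_{\lip^{N+\eps}}$ (equivalently $\|\widetilde f\|_{\lip^N}$) depends only on $f$ and the radius $M$, and that $M$ itself is a genuine increasing function of $\omega(0,T)+\|y_0\|$. This is exactly the kind of localization already carried out in Proposition~\ref{prop:PNonExplosionFirstLevel} and the proof of Lemma~\ref{lem:RDESolutionIsODESolution1RP}, so I would simply refer to that machinery; the stability Condition~2 of Definition~\ref{def:LocalAdmissibleAlgorithm} is what guarantees the iterates used to \emph{run} the algorithm stay bounded, but for the statement as phrased (a single one-step error bound) it suffices to bound the true solution and the single one-step output, which is cleaner.
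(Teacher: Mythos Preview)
Your proposal is correct and follows essentially the same localization argument as the paper: bound the true solution via the non-explosion condition, bound the region on which $A_N$ depends on $f$ via locality Condition~1, take the larger ball, extend $f$ via Whitney (Theorem~\ref{thm:WhitneysTheoremStein}) to a bounded $\lip^{N+\eps}$ vector field, and apply the admissible-scheme error bound. Your initial detour through the stability Condition~2 of Definition~\ref{def:LocalAdmissibleAlgorithm} is unnecessary (as you yourself note), and the paper does not invoke it here either.
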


\begin{proof}
Since $f$ satisfies the $p$ non-explosion condition, we can find a number $M>0$ depending only on $\omega(0,T)$ and $\|y_0\|$ in an increasing way, such that the solution $y$ is independent of $f$ outside $B_M$.

Since $A$ is local, we may find a ball centered around $0$ with radius $r(\|y_0\| + \|S_N(\bx)_{0,T}\|)\le r(\|y_0\| + \omega(0,T))$ such that $A_N(f,y_0,S_N(\bx)_{0,T})$ is independent of $f$ outside that ball.

Restricting $f$ to the larger one of these two balls, and then extending this restriction to a bounded vector field $\widetilde{f}$ using Theorem \ref{thm:WhitneysTheoremStein}, we see that
\begin{align*}
\|y_T - A_N(f,y_0,S_N(\bx)_{0,T}))\| &\le C(N,p,d,e,\|\widetilde{f}\|_{\lip^N},\omega(0,T))\omega(0,T)^{\frac{N+1}{p}}\\
&\le C(N,p,d,e,f,\omega(0,T)+\|y_0\|)\omega(0,T)^{\frac{N+1}{p}}.\qedhere
\end{align*}
\end{proof}

\begin{lemma}\label{lem:LocalAdmissibleAlgorithmAppliedToPartition}
Let $A$ be a local admissible 1-step scheme, let $\bx\in C^{p\var}([0,T],\R^d)$ controlled by some control $\omega$, let $f\in \mathcal{V}^{N+,N}(\R^d,\R^e)$ with $N > p$, let $y_0\in \R^e$, and let $y$ be a solution to the RDE \eqref{eqn:AlgorithmFirstLevelRDE}. Let $0 = t_0 < \dots < t_n = T$ be a sufficiently fine partition of $[0,T]$, and define $$\overline{y}_{t_0} = y_0,\qquad \overline{y}_{t_{k+1}} = A_N(f,\overline{y}_{t_k},S_N(\bx)_{t_k,t_{k+1}}).$$ Then, $$\|y_{t_k} - \overline{y}_{t_k}\| \le C\sum_{j=1}^k \omega(t_{j-1},t_j)^{\frac{N+1}{p}},$$ where $C = C(N,p,d,e,f,\omega(0,T) + \|y_0\|)$ is increasing in the last component, and independent of $k$ and the chosen partition.
\end{lemma}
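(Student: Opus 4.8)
The plan is to run a standard one-step-error-plus-stability (Lady Windermere's fan) argument, but with the twist that everything must be localized so that the unbounded vector field $f$ can be replaced by a bounded Lipschitz one on the relevant region. First I would invoke the $p$ non-explosion condition for $f$ and the local stability condition (Definition \ref{def:LocalAdmissibleAlgorithm}, part 2) to produce a single radius $M = M(N,p,d,e,f,\omega(0,T)+\|y_0\|)$, increasing in the last argument, such that \emph{both} the exact solution $y$ on $[0,T]$ and the entire numerical trajectory $(\overline{y}_{t_k})_k$ (for sufficiently fine partitions) stay in $B_M$; note the stability condition already guarantees $\sup_k\|\overline{y}_{t_k}\|\le C$ once $\sup_k|t_k-t_{k-1}|<\delta$, and one enlarges $M$ to also cover the balls of radius $r(\|\overline{y}_{t_k}\|+\|S_N(\bx)_{t_k,t_{k+1}}\|)$ appearing in the locality condition. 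Using Theorem \ref{thm:WhitneysTheoremStein} I then replace $f$ by a bounded extension $\widetilde f\in L(\R^d,\lip^{N+\eps}(\R^e,\R^e))$ agreeing with $f$ on this enlarged ball, whose Lipschitz norm depends only on $f$ and $M$; by the locality condition the numerical scheme is unaffected, and the exact flow started anywhere in $B_M$ is unaffected as well.

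Next I would set up the telescoping decomposition of the global error. Writing $y(\cdot;s,z)$ for the exact flow of the (now bounded) RDE and comparing the true value $y_{t_k}$ with the numerical value $\overline{y}_{t_k}$, insert the intermediate quantities obtained by running the exact flow from each numerical node:
\begin{align*}
y_{t_k} - \overline{y}_{t_k} = \sum_{j=1}^{k}\Big( y\big(t_k; t_j, y(t_j; t_{j-1},\overline{y}_{t_{j-1}})\big) - y\big(t_k; t_j, \overline{y}_{t_j}\big)\Big).
\end{align*}
Each summand is the propagated image of a single local error $\bm{e}(t_j) = y(t_j;t_{j-1},\overline{y}_{t_{j-1}}) - \overline{y}_{t_j}$. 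The local error is controlled by the admissibility estimate (condition 2 of the admissible 1-step scheme), which now applies because $\widetilde f$ is bounded Lipschitz: $\|\bm e(t_j)\|\le C\,\omega(t_{j-1},t_j)^{(N+1)/p}$, with $C$ depending only on $N,p,d,e,\|\widetilde f\|_{\lip^N}$ and $\omega(0,T)$, hence ultimately only on the stated parameters. To propagate these local errors forward I would use the Lipschitz dependence of the RDE solution on its initial condition — this is exactly the content of Theorem \ref{thm:RDEUniqueness} (or, since $\widetilde f$ is bounded, \cite[Theorem 10.26/10.38]{friz2010multidimensional}): the flow map $z\mapsto y(t_k;t_j,z)$ is Lipschitz with constant bounded by a constant depending only on $\|\widetilde f\|_{\lip^N}$ and $\omega(0,T)$, \emph{uniformly} in $j,k$ and the partition, because $\omega(t_j,t_k)\le\omega(0,T)$. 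Summing the propagated local errors then yields
\begin{align*}
\|y_{t_k}-\overline{y}_{t_k}\| \le C\sum_{j=1}^{k}\omega(t_{j-1},t_j)^{\frac{N+1}{p}},
\end{align*}
which is the claim.

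The main obstacle is the bookkeeping of constants and the circularity lurking in the localization: I need the numerical trajectory to stay in $B_M$ \emph{before} I can invoke the bounded-vector-field error estimate, yet a priori the a priori bound on the trajectory comes from the stability condition rather than from the error estimate — so I must be careful to apply part 2 of Definition \ref{def:LocalAdmissibleAlgorithm} first (this gives $\sup_k\|\overline y_{t_k}\|\le C$ for $\sup_k|t_k-t_{k-1}|<\delta$ unconditionally), and only afterwards enlarge $M$ to swallow both that bound and the locality radii. A secondary point requiring care is that the Lipschitz constant of the flow map and the constant in the one-step estimate must be shown independent of the partition; this follows because both depend on $\omega$ only through $\omega(0,T)$ (superadditivity of $\omega$), but it should be spelled out so that the final constant $C$ genuinely does not depend on $k$ or on the chosen partition, exactly as asserted.
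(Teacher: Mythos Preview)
Your proposal is correct and follows essentially the same Lady Windermere's fan argument as the paper: telescope the global error into propagated local errors, bound each local error by the admissibility estimate, and control the propagation via the Lipschitz dependence of the RDE flow on the initial value (Theorem \ref{thm:RDEUniqueness}), after first invoking the stability condition to keep the numerical trajectory in a fixed ball. The only cosmetic difference is that you carry out the localization (Whitney extension to a bounded $\widetilde f$) by hand, whereas the paper simply cites Lemma \ref{lem:LocalAdmissibleAlgorithmErrorBound}, which already packages exactly that localization step.
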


\begin{proof}
It is of course sufficient to prove this lemma for $k=n$, i.e. $t_k = T$. Let $y^k_t = y(t;t_k,\overline{y}_{t_k})$ for $t\in[t_k,T]$. Then, $y^0_T = y_T$ and $y^n_T = \overline{y}_T$, and hence, $$\|y_T - \overline{y}_T\| \le \sum_{k=1}^n \|y^k_T - y^{k-1}_T\|.$$ Since $A$ is a local admissible 1-step scheme, $(\overline{y}_{t_k})$ stays uniformly bounded, independent of the partition (since the partition is sufficiently fine). Hence, we may apply Theorem \ref{thm:RDEUniqueness} to find a constant $C$ independent of the partition such that $$\rho_{p,\omega;[t_k,T]}(y^k,y^{k-1}) \le C\|y^k_{t_k} + y^{k-1}_{t_k}\|.$$ In particular,
\begin{align*}
\|y^k_T - y^{k-1}_T\| &\le \|y^k_{t_k,T} - y^{k-1}_{t_k,T}\| + \|y^k_{t_k} - y^{k-1}_{t_k}\|\\
&\le \rho_{p,\omega;[t_k,T]}(y^k,y^{k-1})\omega(t_k,T)^{1/p} + \|y^k_{t_k} - y^{k-1}_{t_k}\|\\
&\le C\|y^k_{t_k} - y^{k-1}_{t_k}\|\\
&\le C\|A_N(f,\overline{y}_{t_{k-1}},S_N(\bx)_{t_{k-1},t_k}) - y(t_k;t_{k-1},\overline{y}_{t_{k-1}})\|\\
&\le C\omega(t_{k-1},t_k)^{\frac{N+1}{p}},
\end{align*}
where we used Lemma \ref{lem:LocalAdmissibleAlgorithmErrorBound}. Summing over $k$ finishes the proof.
\end{proof}

\begin{lemma}\label{lem:LocalAdmissibleAlgorithmLocallyLipschitz}
Let $A$ be a local admissible 1-step scheme. Then, for all $N\ge 1$, all $f\in \mathcal{V}^{N+,N}(\R^d,\R^e)$, and every $R>0$, there exists a constant $C<\infty$ such that, for all $y\in \R^e$, and all $\bg_1,\bg_2\in G^N(\R^d)$ with $\|y\| + \|\bg_1\|\lor\|\bg_2\| \le R$, we have $$\|A_N(f,y,\bg_1) - A_N(f,y,\bg_2)\| \le  C\left(\vertiii{\bg_1}^{N+1} + \|\bg_1-\bg_2\|\right)$$
\end{lemma}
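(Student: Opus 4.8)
The plan is to compare $A_N(f,y,\bg_1)$ and $A_N(f,y,\bg_2)$ through the exact solutions of rough differential equations driven by short bounded-variation paths realising $\bg_1$ and $\bg_2$, controlling the endpoints by the error bound of the admissible scheme and the gap between the two exact solutions by an Euler expansion. First I would localise: by condition~(1) in Definition~\ref{def:LocalAdmissibleAlgorithm}, and since $r$ is increasing, $A_N(f,y,\bg_i)$ depends on $f$ only on the ball $B_{r(R)}$ for $i=1,2$, so by Theorem~\ref{thm:WhitneysTheoremStein} I may replace $f$ by a bounded vector field $\widetilde f\in L(\R^d,\lip^{N+\eps}(\R^e,\R^e))$ agreeing with $f$ on $B_{r(R)}$, without changing $A_N(f,y,\bg_1)$ or $A_N(f,y,\bg_2)$, and with $\|\widetilde f\|_{\lip^{N+\eps}}\le C(f,R)$. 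From now on I work with $\widetilde f$ and fix some $p\in[1,N)$.

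Next I would build the comparison paths. Let $\psi^1$ be a bounded-variation path on $[0,1]$ with $S_N(\psi^1)_{0,1}=\bg_1$ and $\|\psi^1\|_{1\var}\le C\vertiii{\bg_1}$ (a geodesic, or a path realising the homogeneous norm up to a fixed multiple). Set $\bh\coloneqq\bg_1^{-1}\otimes\bg_2$; from $\bh-\bm{1}=\bg_1^{-1}\otimes(\bg_2-\bg_1)$ and the fact that $\|\bg_1^{-1}\|$ is bounded on $\{\|\cdot\|\le R\}$ one obtains $\|\pi_k(\bh)\|\le C_R\|\bg_1-\bg_2\|$ for $k=1,\dots,N$, and, comparing the homogeneous norm with the ambient norm on bounded subsets of $G^N(\R^d)$, also $\vertiii{\bh}^{N+1}\le C_R\|\bg_1-\bg_2\|$. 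Let $\psi^h$ be a bounded-variation path with $S_N(\psi^h)_{0,1}=\bh$ and $\|\psi^h\|_{1\var}\le C\vertiii{\bh}$, and put $\psi^2\coloneqq\psi^1\ast\psi^h$ (concatenation), so that $S_N(\psi^2)_{0,1}=\bg_1\otimes\bh=\bg_2$ by Chen's identity and $\|\psi^2\|_{1\var}\le C(\vertiii{\bg_1}+\vertiii{\bh})$. I lift $\psi^1,\psi^2,\psi^h$ to geometric $p$-rough paths, controlled respectively by $\omega^i(s,t)\coloneqq(C\|\psi^i\|_{1\var;[s,t]})^{p}$, noting $\omega^i(0,1)\le C_R$, and let $y^i$ solve $\dd y^i_t=\widetilde f(y^i_t)\sdd\psi^i_t$, $y^i_0=y$.

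Now decompose
\[
\|A_N(\widetilde f,y,\bg_1)-A_N(\widetilde f,y,\bg_2)\|\le\|A_N(\widetilde f,y,\bg_1)-y^1_1\|+\|y^1_1-y^2_1\|+\|y^2_1-A_N(\widetilde f,y,\bg_2)\|.
\]
The first and third terms are controlled by condition~(2) in the definition of an admissible $1$-step scheme (with $\bx=\psi^1$, resp.\ $\psi^2$, and $T=1$), giving $C\,\omega^1(0,1)^{(N+1)/p}\le C\vertiii{\bg_1}^{N+1}$ and $C\,\omega^2(0,1)^{(N+1)/p}\le C(\vertiii{\bg_1}+\vertiii{\bh})^{N+1}\le C(\vertiii{\bg_1}^{N+1}+\|\bg_1-\bg_2\|)$; the constants are uniform on $\{\|y\|+\|\bg_i\|\le R\}$ because they increase in $\omega^i(0,1)\le C_R$ and in $\|\widetilde f\|_{\lip^{N}}\le C(f,R)$. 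For the middle term I would use that $\widetilde f$ is autonomous and $\psi^2=\psi^1\ast\psi^h$, so $y^2_1=\Phi(y^1_1)$ with $\Phi$ the time-one flow of the RDE driven by $\psi^h$; since $z\coloneqq y^1_1$ lies in a ball of radius $\le C_R$ by Theorem~\ref{thm:RDEExistence}, the Euler estimate (Appendix~\ref{sec:Euler}) yields $\|\Phi(z)-z\|\le\sum_{k=1}^{N}\|\widetilde f^{\circ k}\|_\infty\,\|\pi_k(\bh)\|+C\vertiii{\bh}^{N+1}\le C_R\|\bg_1-\bg_2\|$. Adding the three estimates gives the claim.

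I expect the middle term to be the main obstacle: the naive bound $\|\Phi(z)-z\|\le C\|\psi^h\|_{p\var}\le C\vertiii{\bh}$ only produces a power $\|\bg_1-\bg_2\|^{1/N}$, since $\vertiii{\bh}$ is merely of order $\|\bg_1-\bg_2\|^{1/N}$ when $\bh$ is near $\bm{1}$, so recovering the first power of $\|\bg_1-\bg_2\|$ really requires the $N$-th order Euler expansion of the flow together with the cancellation $\|\pi_k(\bh)\|\le C_R\|\bg_1-\bg_2\|$. The same local comparison between the homogeneous and the ambient norm on bounded subsets of $G^N(\R^d)$ is also what turns the term $\vertiii{\bh}^{N+1}$ in the third contribution into $C_R\|\bg_1-\bg_2\|$; the remaining steps are routine localisation arguments of the kind already used above.
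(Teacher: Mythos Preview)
Your proof is correct and close in spirit to the paper's, but the decomposition is genuinely different. The paper takes \emph{independent} geodesics $\gamma_1,\gamma_2$ realising $\bg_1,\bg_2$ and inserts the Euler scheme at the common starting point $y$, giving a five-term chain
\[
A_N(f,y,\bg_1)\to y^1_1\to A_N^{\textup{Euler}}(f,y,\bg_1)\to A_N^{\textup{Euler}}(f,y,\bg_2)\to y^2_1\to A_N(f,y,\bg_2),
\]
where the middle step is controlled by the Lipschitz dependence of the Euler scheme on the signature (Lemma~\ref{lem:EulerIsLipschitz}); this leaves a stray term $\vertiii{\bg_2}^{N+1}$, which is then separately absorbed into $\vertiii{\bg_1}^{N+1}+\|\bg_1-\bg_2\|$ via an explicit algebraic estimate. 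Your concatenation $\psi^2=\psi^1\ast\psi^h$ with $\bh=\bg_1^{-1}\otimes\bg_2$ collapses this to a three-term chain and replaces the middle step by the Euler expansion of the \emph{flow increment} along $\psi^h$ at $z=y^1_1$; the algebraic work is then shifted to the bound $\vertiii{\bh}^{N+1}\le C_R\|\bg_1-\bg_2\|$, which is the natural analogue of the paper's estimate on $\vertiii{\bg_2}^{N+1}$. Both routes ultimately rely on the same two ingredients (the local error bound for $A_N$ and the $N$-step Euler approximation of a short RDE flow), so neither is strictly simpler, but your arrangement makes the role of the ``difference signature'' $\bh$ more transparent and avoids passing through the Euler scheme twice.
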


\begin{proof}
Let $\vertiii{.}_{CC}$ denote the Carnot-Caratheodory norm (see \cite[Theorem 7.32]{friz2010multidimensional}), and let $\gamma_1\in C([0,1],\R^d)$ be a geodesic path from $\bm{1}$ to $\bm{g}_1$ (as in \cite[Theorem 7.32]{friz2010multidimensional}), and let $\gamma_2\in C([0,1],\R^d)$ be a geodesic path from $\bm{1}$ to $\bm{g}_2$. We remark that these geodesic paths are by definition of finite variation.

Let $y^i$ denote the solution to $$dy^i = f(y^i)d\gamma_i,\qquad y^i_0 = y,\qquad i=1,2.$$ This solution exists and is unique by Lemma \ref{lem:RDESolutionIsODESolution1RP}. Recall the 1-step scheme $A^{\textup{Euler}}$ from Appendix \ref{sec:Euler}. We have
\begin{align*}
\|A_N(f,y,\bg_1) - A_N(f,y,\bg_2)\| &\le \|A_N(f,y,\bg_1) - y^1_1\| + \|y^1_1 - A_N^{\textup{Euler}}(f,y,\bg_1)\|\\
&\qquad + \|A_N^{\textup{Euler}}(f,y,\bg_1) - A_N^{\textup{Euler}}(f,y,\bg_2)\|\\
&\qquad + \|A_N^{\textup{Euler}}(f,y,\bg_2) - y^2_1\| + \|y^2_1 - A_N(f,y,\bg_2)\|.
\end{align*}

By Lemma \ref{lem:LocalAdmissibleAlgorithmErrorBound}, \cite[Theorem 10.30]{friz2010multidimensional} (applicable since we have a single Euler step and $A_N^{\textup{Euler}}$ depends on $f$ only in a neighbourhood of $y$), and Lemma \ref{lem:EulerIsLipschitz},
\begin{align*}
\|A_N(f,y,\bg_1) - A_N(f,y,\bg_2)\| &\le C\Big(\|\gamma_1\|^{N+1}_{1\var} + \|\gamma_1\|^{N+1}_{1\var} + \|\bg_1-\bg_2\|\\
&\qquad + \|\gamma_2\|^{N+1}_{1\var} + \|\gamma_2\|^{N+1}_{1\var}\Big)\\
&\le C\left(\vertiii{\bg_1}^{N+1} + \vertiii{\bg_2}^{N+1} + \|\bg_1-\bg_2\|\right).
\end{align*}

To finish the proof, we show that we can estimate $\vertiii{\bg_2}^{N+1}$ by the other terms. Indeed,
\begin{align*}
\vertiii{\bg_2}^{N+1} &= \sup_{i=1,\dots,N} \|\pi_i(\bg_2)\|^{\frac{N+1}{i}}\\
&\le C\sup_{i=1,\dots,N} \left(\|\pi_i(\bg_2 - \bg_1)\|^{\frac{N+1}{i}} + \|\pi_i(\bg_1)\|^{\frac{N+1}{i}}\right)\\
&\le C\bigg(\|\bg_2 - \bg_1\|\sup_{i=1,\dots,N}\|\pi_i(\bg_2 - \bg_1)\|^{\frac{N+1-i}{i}} + \vertiii{\bg_1}^{N+1}\bigg)\\
&\le C\bigg(\|\bg_2 - \bg_1\|\left(\sup_{i=1,\dots,N}\|\pi_i(\bg_2)\|^{\frac{N+1-i}{i}} + \sup_{i=1,\dots,N}\|\pi_i(\bg_1)\|^{\frac{N+1-i}{i}}\right)\\
&\qquad + \vertiii{\bg_1}^{N+1}\bigg)\\
&\le C\bigg(\|\bg_2 - \bg_1\|\left(\vertiii{\bg_2}\lor\vertiii{\bg_2}^N + \vertiii{\bg_1}\lor\vertiii{\bg_1}^N\right) + \vertiii{\bg_1}^{N+1}\bigg)\\
&\le C\left(\|\bg_2 - \bg_1\| + \vertiii{\bg_1}^{N+1}\right).\qedhere
\end{align*}
\end{proof}

\begin{definition}
Let $A$ be an admissible 1-step scheme. We call $A$ group-like, if for all $N\ge 1$, all $\bm{g}\in G^N(\R^d)$, all $f\in\mathcal{V}^{N+,N}(\R^d,\R^e),$ and all $\by\in G^N(\R^e)$, we have that $$A_N(\bm{f},\by,\bm{g}) \in G^N(\R^e).$$
\end{definition}

\subsection{Computing the error representation formula}\label{sec:EstimateWeights}

By Theorem \ref{thm:RDEExactErrorRepresentationInLocalErrors}, we can represent the global error as a weighted sum of the local errors. The weights involve the first variation function $\Psi$ (or $\bm{\Psi}$), and the goal of this section is to give an algorithm for approximating $\Psi$, and thus the error representation formula. We remark that if one is interested in estimating the error $g(\overline{\by}_T) - g(\by_T)$ of the full solution, rather than just the first level $g(\overline{y}_T) - g(y_T)$, then one just needs to replace the vector field $f$ by its full extension $\bm{f}$ in the following discussion. Since everything is completely analogous, we focus on the first level for simplicity.

Before diving into the details, let us recall that if we are given two group-like elements $\bm{a}\in G^N(R^a)$ and $\bm{b}\in G^N(\R^b)$, there is in general no canonical choice for $\bm{c}\in G^N(\R^{a+b})$, with projections $\bm{a}$ and $\bm{b}$ to the first, respectively last, components. This is essentially because we do not know how to make sense of the missing cross-iterated integrals between $\bm{a}$ and $\bm{b}$ that would be required for the definition of $\bm{c}$. However, if, say, $\bm{a} = \bm{1}$ is actually the trivial element, there is an obvious canonical choice. We may just use the trivial zero path for $\bm{a}$, which has the correct signature, and we can easily integrate any path irrespective of regularity against the zero path (the result is of course always $0$). Hence, we may canonically choose $\bm{c}$ such that all its components which contain a component of $\bm{a}$ are simply put to $0$ (while the components exclusively pertaining to $\bm{b}$ are of course already given by $\bm{b}$). We denote this canonical element as $(\bm{1}, \bm{b}) \coloneqq \bm{c}$. The element $(\bm{a}, \bm{1})$ is of course defined similarly.

The algorithm for approximating the error representation formula is outlined below. We assume that we are given a rough path $\bx\in C^{p\var,0}([0,T],\R^d)$, a vector field $f\in\mathcal{V}^{(N+1)+,N}(\R^d,\R^e)$, where $N > p \ge 1$, and an initial value $\by_0\in G^N(\R^e)$. Moreover, our payoff function satisfies $g\in C^2(\R^e, \R^c).$ Also, we are given a local group-like admissible 1-step scheme $A$, and a sufficiently fine partition $0 = t_0 < \dots < t_n = T$ of $[0,T]$.

\begin{enumerate}
\item Let $\bz_0\in G^N(\R^{d+e})$ be the (canonical) element with $\bz_0 = (\bm{1},\by_0).$ Define the vector field $f_1\in\mathcal{V}^{(N+1)+}(\R^d,\R^{d+e})$ given by $$f_1(x,y) = 
\begin{pmatrix}
\id\\
f(y)
\end{pmatrix}
.$$ By Lemma \ref{lem:PNonExplosionAdjoin}, $f_1$ satisfies the $N$ non-explosion condition, and by Theorem \ref{thm:RDEUniqueness}, there exists a unique solution $\bz$ to $$\dd\bz_t = f_1(\bz_t) \sdd\bx_t,\qquad \bz_0 = \bz_0.$$ Define and compute the approximation $\overline{\bz}_0 \coloneqq \bz_0,$ $$\overline{\bz}_{t_{k+1}} = A_N(\bm{f}_1,\overline{\bz}_{t_k},S_N(\bx)_{t_k,t_{k+1}}),\qquad k=0,\dots,n-1.$$
\item Define the function $g\in L(\R^{d+e},\lip^{N+}_{\textup{loc}}(\R^{d+e},\R^{e^2}))$ by $$g(x,y) =
\begin{pmatrix}
f'(y) & 0
\end{pmatrix}.$$ For all $t\in[0,T]$ and $\widehat{\bz}\in G^N(\R^{d+e})$, define the rough path $$\bm{h}^{t,\widehat{\bz}}_{s,u} = \int_s^u g(\bz(r;t,\widehat{\bz}))\sdd\bz(r;t,\widehat{\bz}),$$ where $t\le s \le u\le T$. This rough path is well-defined by Lemma \ref{lem:RoughIntegralWellDefined1}.

The path $\bm{h}^{t,\widehat{\bz}}_{s,u}$ is the solution of a rough integral. To solve this integral, we rewrite it as a differential equation, so that we can apply our algorithm. To this end, define the vector field $f_2\in\mathcal{V}^{N+}(\R^{d+e},\R^{d+e+e^2})$, $$f_2(z,h) = 
\begin{pmatrix}
\id\\
g(z)
\end{pmatrix}
.$$ By Lemma \ref{lem:PNonExplosionIntegral}, $f_2$ satisfies the $N$ non-explosion condition. Let $\bm{v}_0\in G^N(\R^{d+e+e^2})$ be the (canonical) element satisfying $\bm{v}_0 = (\widehat{\bz},\bm{1}).$ Let $\bm{v}$ be a solution to the RDE $$\dd\bm{v}_s = f_2(\bm{v}_s) \sdd\bz_s,\qquad \bm{v}_t = \bm{v}_0,\qquad s\ge t,$$ which exists by Theorem \ref{thm:RDEExistence}. Then, $\bm{h}^{t,\widehat{\bz}}$ is the projection of $\bm{v}$ onto the last $e^2$ coordinates. In fact, it is then not hard to see, by the definition of $f_2$, that the increments of $\bm{h}^{t,\widehat{\bz}}$ only depend on the last $e$ coordinates of $\widehat{\bz}$. Hence, given $\widehat{\by}\in G^N(\R^e)$, let $\widehat{\bz}\in G^N(\R^{d+e})$ be the (canonical) element satisfying $\widehat{\bz} = (\bm{1},\widehat{\by})$. Then, we can safely define $$\bm{h}^{t,\widehat{\by}}_{s,u} \coloneqq \bm{h}^{t,\widehat{\bz}}_{s,u}.$$ Let $\widehat{\bm{v}}_{t_k}\in T_1^N(\R^{d+e+e^2})$ be the (canonical) element satisfying $\widehat{\bm{v}}_{t_k} = (\overline{\bz}_{t_k},\bm{1})$. Define and compute $\overline{\bm{v}}_{t_k,t_{k+1}}\in G^N(\R^{d+e+e^2})$ by $$\overline{\bm{v}}_{t_k,t_{k+1}} \coloneqq \widehat{\bm{v}}_{t_k}^{-1}\otimes A_N(\bm{f}_2,\widehat{\bm{v}}_{t_k}, \overline{\bz}_{t_k}^{-1}\otimes \overline{\bz}_{t_{k+1}}),\qquad k=0,\dots,n-1.$$ This is well-defined since $A$ is group-like. Then, define $\overline{\bm{h}}_{t_k,t_{k+1}}$ to be the projection of $\overline{\bm{v}}_{t_k,t_{k+1}}$ onto the last $e^2$ coordinates.
\item Recall that $\Psi^{t,\widehat{\by}}(s) \coloneqq \Psi(s,\by(s;t,\widehat{\by}))$ satisfies the RDE
\begin{align*}
-\dd\Psi^{t,\widehat{\by}}(s) &= \Psi^{t,\widehat{\by}}(s) f'(\by(s;t,\widehat{\by}))\sdd\bx_s = \Psi^{t,\widehat{\by}}(s) \sdd\bm{h}^{t,\widehat{\by}}_s,\\
\Psi^{t,\widehat{\by}}(T) &= (\nabla g)(y(T; t, \widehat{y})).
\end{align*}
Define the linear vector field $f_3\in\mathcal{V}^{N+}(\R^{e\times e},\R^{c\times e}),$ which is given by $$f_3(y)x = yx,$$ where $y\in \R^{c\times e}$ and $x\in\R^{e\times e}$, and $yx$ is the matrix multiplication. Being linear, $f_3$ satisfies the $N$ non-explosion condition by \cite[Theorem 10.53]{friz2010multidimensional}. We then define and compute the approximation $\overline{\Psi}$ by $\overline{\Psi}(T) = (\nabla g)(\overline{y}_T),$ and $$\overline{\Psi}(t_k) = A(f_3,\overline{\Psi}(t_{k+1}),\overline{\bm{h}}_{t_k,t_{k+1}}^{-1}),\qquad k=0,\dots,n-1.$$ Here, $\overline{y}_T$ is of course the projection of $\overline{z}_T$ onto the last $e$ coordinates.
\item We estimate the local error $e(t_k)$ by $\overline{e}(t_k)$, for example by using a different grid, or a different level of the approximation scheme.
\item We compute the approximation of the global error given by $$g(y_T) - g(\overline{y}_T) \approx \sum_{k=1}^n \overline{\Psi}(t_k)\overline{e}(t_k).$$ 
\end{enumerate}

We will now prove that the approximated error computed by the above algorithm is sufficiently close to the actual error. In what follows, let $\omega$ be a control of $\bx$.

\begin{lemma}\label{lem:ApproximationOfZ}
Under the above assumptions, and in particular assuming that the partition is fine enough, we have $$\|\overline{\bz}_{t_k,t_{k+1}} - S_N(\bz(.;t_k,\overline{\bz}_{t_k}))_{t_k,t_{k+1}}\| \le C\omega(t_k,t_{k+1})^{\frac{N+1}{p}},$$ where $C = C(d,e,p,N,f,\omega(0,T),\|\bz_0\|)$ does not depend on the partition $(t_k)$, and also not on $k$.
\end{lemma}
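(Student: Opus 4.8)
The plan is to read this as the standard one-step local error bound for the full RDE with vector field $\bm{f}_1$, applied separately on each sub-interval $[t_k,t_{k+1}]$, with two small additions: making the constant uniform in $k$, and passing from endpoint errors to errors of multiplicative increments.

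First I would set up the application of Lemma \ref{lem:LocalAdmissibleAlgorithmErrorBound} to $\bm{f}_1$. The vector field $f_1$ lies in $\mathcal{V}^{(N+1)+}(\R^d,\R^{d+e})$ and, as already noted in the construction (via Lemma \ref{lem:PNonExplosionAdjoin}), satisfies the $N$ non-explosion condition, so $f_1\in\mathcal{V}^{(N+1)+,N}\subseteq\mathcal{V}^{N+,N}$; by Lemma \ref{lem:NonExplosionExtendsToFullRDEs} the associated full vector field satisfies $\bm{f}_1\in\mathcal{V}^{N+,N}(\R^d,T_1^N(\R^{d+e}))$, where we simply treat the finite-dimensional space $T_1^N(\R^{d+e})$ in place of $\R^e$ (the proof of Lemma \ref{lem:LocalAdmissibleAlgorithmErrorBound} uses only that the state space is a finite-dimensional normed space). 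By Lemma \ref{lem:FullRDEAndFirstLevelRDEWithFullVF}, the path $s\mapsto\bz(s;t_k,\overline{\bz}_{t_k})$ on $[t_k,T]$ is exactly the first-level solution of the $T_1^N(\R^{d+e})$-valued RDE $\dd\bz=\bm{f}_1(\bz)\sdd\bx$ started from $\overline{\bz}_{t_k}$ at time $t_k$, and $\overline{\bz}_{t_{k+1}}=A_N(\bm{f}_1,\overline{\bz}_{t_k},S_N(\bx)_{t_k,t_{k+1}})$ is its one-step approximation. Restricting $\bx$ and $\omega$ to $[t_k,t_{k+1}]$ and invoking Lemma \ref{lem:LocalAdmissibleAlgorithmErrorBound} then gives
\[
\|\bz(t_{k+1};t_k,\overline{\bz}_{t_k})-\overline{\bz}_{t_{k+1}}\|\le C\big(N,p,d,e,\bm{f}_1,\omega(t_k,t_{k+1})+\|\overline{\bz}_{t_k}\|\big)\,\omega(t_k,t_{k+1})^{\frac{N+1}{p}},
\]
with the constant increasing in its last argument.

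Next I would make the constant $k$-independent and pass to increments. Since $A$ is a local admissible $1$-step scheme and $\bm{f}_1$ satisfies the $N$ non-explosion condition, the second (stability) condition of Definition \ref{def:LocalAdmissibleAlgorithm} provides, for every sufficiently fine partition, a bound $\sup_k\|\overline{\bz}_{t_k}\|\le C_0$ with $C_0$ independent of the partition; combined with $\omega(t_k,t_{k+1})\le\omega(0,T)$ and monotonicity, the constant above is dominated by a single $C_1=C_1(d,e,p,N,f,\omega(0,T),\|\bz_0\|)$ for all $k$. Since $A$ is group-like and $\bz_0=(\bm{1},\by_0)\in G^N(\R^{d+e})$, each $\overline{\bz}_{t_k}$ lies in $G^N(\R^{d+e})$, so with $S_N(\bz(\cdot;t_k,\overline{\bz}_{t_k}))_{t_k,t_{k+1}}=\overline{\bz}_{t_k}^{-1}\otimes\bz(t_{k+1};t_k,\overline{\bz}_{t_k})$ and $\overline{\bz}_{t_k,t_{k+1}}=\overline{\bz}_{t_k}^{-1}\otimes\overline{\bz}_{t_{k+1}}$, bilinearity of the truncated tensor product yields $\overline{\bz}_{t_k,t_{k+1}}-S_N(\bz(\cdot;t_k,\overline{\bz}_{t_k}))_{t_k,t_{k+1}}=\overline{\bz}_{t_k}^{-1}\otimes\big(\overline{\bz}_{t_{k+1}}-\bz(t_{k+1};t_k,\overline{\bz}_{t_k})\big)$. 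Bounding $\|\overline{\bz}_{t_k}^{-1}\|$ in terms of $C_0,N,d,e$ and using $\|\bm{a}\otimes\bm{b}\|\le C(N,d,e)\,\|\bm{a}\|\,\|\bm{b}\|$ on $T^N(\R^{d+e})$ then gives the claimed estimate.

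I do not expect a real obstacle: the substance is just transporting Lemma \ref{lem:LocalAdmissibleAlgorithmErrorBound} from the Euclidean, first-level setting to the tensor-algebra-valued full RDE driven by $\bm{f}_1$, and using the stability clause of locality to get a $k$-uniform constant; the endpoint-to-increment step is routine bilinearity together with the boundedness of $\overline{\bz}_{t_k}$.
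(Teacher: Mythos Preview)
Your proposal is correct and follows essentially the same approach as the paper: rewrite the increment difference as $\overline{\bz}_{t_k}^{-1}\otimes\big(A_N(\bm{f}_1,\overline{\bz}_{t_k},S_N(\bx)_{t_k,t_{k+1}})-\bz(t_{k+1};t_k,\overline{\bz}_{t_k})\big)$, use the stability clause of locality to bound $\|\overline{\bz}_{t_k}^{-1}\|$ uniformly in $k$, and conclude via Lemma~\ref{lem:LocalAdmissibleAlgorithmErrorBound}. You simply spell out in more detail (via Lemmas~\ref{lem:NonExplosionExtendsToFullRDEs} and~\ref{lem:FullRDEAndFirstLevelRDEWithFullVF}) why $\bm{f}_1$ lies in the right vector field class and why the constant is $k$-uniform, which the paper leaves implicit.
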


\begin{proof}
By the second condition in the definition of a local admissible 1-step scheme, we have
\begin{align*}
\|\overline{\bz}_{t_k,t_{k+1}} &- S_N(\bz(.;t_k,\overline{\bz}_{t_k}))_{t_k,t_{k+1}}\|\\
&= \left\|\overline{\bz}_{t_k}^{-1}\otimes A_N(\bm{f}_1,\overline{\bz}_{t_k},S_N(\bx)_{t_k,t_{k+1}}) - \overline{\bz}_{t_k}^{-1}\otimes \bz(t_{k+1};t_k,\overline{\bz}_{t_k})\right\|\\
&\le C\left\|A_N(\bm{f}_1,\overline{\bz}_{t_k},S_N(\bx)_{t_k,t_{k+1}}) - \bz(t_{k+1};t_k,\overline{\bz}_{t_k})\right\|.
\end{align*}
The result then follows from Lemma \ref{lem:LocalAdmissibleAlgorithmErrorBound}.
\end{proof}

\begin{lemma}\label{lem:ApproximationOfH}
Under the above assumptions, we have $$\|\overline{\bm{h}}_{t_k,t_{k+1}} - \bm{h}^{t_k,\overline{\bz}_{t_k}}_{t_k,t_{k+1}}\| \le C\omega(t_k,t_{k+1})^{\frac{N+1}{p}},$$ where $C = C(d,e,p,N,f,\omega(0,T),\|\bz_0\|)$ does not depend on the partition $(t_k)$, and also not on $k$.
\end{lemma}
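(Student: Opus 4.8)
The plan is to follow the proof of Lemma \ref{lem:ApproximationOfZ} and add one extra term accounting for the fact that, in the definition of $\overline{\bm{v}}_{t_k,t_{k+1}}$, the one-step scheme $A_N$ is evaluated at the \emph{approximate} signature increment $\overline{\bz}_{t_k,t_{k+1}}\coloneqq\overline{\bz}_{t_k}^{-1}\otimes\overline{\bz}_{t_{k+1}}$ rather than at the exact one $S_N(\bz(.;t_k,\overline{\bz}_{t_k}))_{t_k,t_{k+1}}$. The first step is to collect the uniform bounds that make every constant below independent of $k$ and of the partition: by the second condition of Definition \ref{def:LocalAdmissibleAlgorithm} the points $\overline{\bz}_{t_k}$ stay in a fixed ball once the mesh is small enough, hence so do $\widehat{\bm{v}}_{t_k}$ and $\widehat{\bm{v}}_{t_k}^{-1}$; by Theorem \ref{thm:RDEExistence} the flow $\bz(.;t_k,\overline{\bz}_{t_k})$ (the RDE solution driven by $\bx$ from the bounded value $\overline{\bz}_{t_k}$) satisfies $\|\bz(.;t_k,\overline{\bz}_{t_k})\|_{p\var;[s,t]}\le C\|\bx\|_{p\var;[s,t]}\le C\omega(s,t)^{1/p}$ on $[t_k,T]$ with $C$ uniform; in particular $\overline{\bz}_{t_k,t_{k+1}}$ and $S_N(\bz(.;t_k,\overline{\bz}_{t_k}))_{t_k,t_{k+1}}$ lie in a fixed bounded set, and $\vertiii{S_N(\bz(.;t_k,\overline{\bz}_{t_k}))_{t_k,t_{k+1}}}\le C\omega(t_k,t_{k+1})^{1/p}$.

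Next I would reduce the statement about $\bm{h}$ to one about $\bm{v}$. By construction $\overline{\bm{h}}_{t_k,t_{k+1}}$ is the projection onto the last $e^2$ coordinates of $\widehat{\bm{v}}_{t_k}^{-1}\otimes A_N(\bm{f}_2,\widehat{\bm{v}}_{t_k},\overline{\bz}_{t_k,t_{k+1}})$, while $\bm{h}^{t_k,\overline{\bz}_{t_k}}_{t_k,t_{k+1}}$ is the same projection applied to $\widehat{\bm{v}}_{t_k}^{-1}\otimes\bm{v}(t_{k+1};t_k,\widehat{\bm{v}}_{t_k})$, where $\bm{v}(.;t_k,\widehat{\bm{v}}_{t_k})$ is the exact flow from step (2) of the algorithm started at $\widehat{\bm{v}}_{t_k}$ at time $t_k$ (using that $\bm{h}^{t_k,\overline{\bz}_{t_k}}$ is trivial at $t_k$, so its increment over $[t_k,t_{k+1}]$ coincides with its value at $t_{k+1}$). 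Since the coordinate projection is linear and norm non-increasing, and left multiplication by the uniformly bounded element $\widehat{\bm{v}}_{t_k}^{-1}$ is Lipschitz on the truncated tensor algebra, it suffices to bound $\|A_N(\bm{f}_2,\widehat{\bm{v}}_{t_k},\overline{\bz}_{t_k,t_{k+1}})-\bm{v}(t_{k+1};t_k,\widehat{\bm{v}}_{t_k})\|$ by $C\omega(t_k,t_{k+1})^{\frac{N+1}{p}}$.

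I would then split this difference by inserting the intermediate quantity $A_N(\bm{f}_2,\widehat{\bm{v}}_{t_k},S_N(\bz(.;t_k,\overline{\bz}_{t_k}))_{t_k,t_{k+1}})$. In the first piece $A_N$ is evaluated at the same vector field and the same starting point, so Lemma \ref{lem:LocalAdmissibleAlgorithmLocallyLipschitz} (applicable thanks to the uniform bounds) controls it by $C\big(\vertiii{S_N(\bz(.;t_k,\overline{\bz}_{t_k}))_{t_k,t_{k+1}}}^{N+1}+\|\overline{\bz}_{t_k,t_{k+1}}-S_N(\bz(.;t_k,\overline{\bz}_{t_k}))_{t_k,t_{k+1}}\|\big)$, and both summands are $O(\omega(t_k,t_{k+1})^{\frac{N+1}{p}})$ — the first by the bound from the previous paragraph, the second by Lemma \ref{lem:ApproximationOfZ}. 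The second piece is exactly the one-step error of $A_N$ for the RDE driven by $\bz(.;t_k,\overline{\bz}_{t_k})$ (whose level-$N$ signature increment over $[t_k,t_{k+1}]$ is $S_N(\bz(.;t_k,\overline{\bz}_{t_k}))_{t_k,t_{k+1}}$) with vector field $\bm{f}_2$; since $\bm{f}_2$ satisfies the $N$ non-explosion condition (Lemma \ref{lem:PNonExplosionIntegral}, together with Lemma \ref{lem:NonExplosionExtendsToFullRDEs} for the full extension), this driving path is controlled by a uniform multiple of $\omega$, and $\widehat{\bm{v}}_{t_k}$ is uniformly bounded, Lemma \ref{lem:LocalAdmissibleAlgorithmErrorBound} bounds it by $C\omega(t_k,t_{k+1})^{\frac{N+1}{p}}$ with $C$ uniform. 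Adding the two pieces finishes the proof.

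The step I expect to be the main obstacle is the first of these two pieces: feeding $A_N$ the approximate signature $\overline{\bz}_{t_k,t_{k+1}}$ in place of the exact one. This is precisely where the Lipschitz-in-the-group-element estimate of Lemma \ref{lem:LocalAdmissibleAlgorithmLocallyLipschitz} is needed, and one has to check that the $\vertiii{\cdot}^{N+1}$ term it produces does not destroy the rate — which works because $\bz(.;t_k,\overline{\bz}_{t_k})$ is controlled by a constant multiple of $\omega$, so that $\vertiii{S_N(\bz(.;t_k,\overline{\bz}_{t_k}))_{t_k,t_{k+1}}}^{N+1}\le C\omega(t_k,t_{k+1})^{\frac{N+1}{p}}$. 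The remaining care is purely bookkeeping with the canonical embeddings $(\bm{1},\cdot)$ and $(\cdot,\bm{1})$ and with the observation that left multiplication by $\widehat{\bm{v}}_{t_k}^{-1}$ and the projection onto the $\R^{e^2}$-block do not disturb the pure-$\R^{e^2}$ components; this presents no genuine difficulty.
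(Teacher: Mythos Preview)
Your proposal is correct and follows essentially the same approach as the paper: reduce from $\bm{h}$ to $\bm{v}$ via the projection and left multiplication by $\widehat{\bm{v}}_{t_k}^{-1}$, insert the intermediate point $A_N(\bm{f}_2,\widehat{\bm{v}}_{t_k},S_N(\bz(.;t_k,\overline{\bz}_{t_k}))_{t_k,t_{k+1}})$, and bound the two pieces with Lemma~\ref{lem:LocalAdmissibleAlgorithmLocallyLipschitz} (combined with Lemma~\ref{lem:ApproximationOfZ} and the $p$-variation estimate from Theorem~\ref{thm:RDEExistence}) and Lemma~\ref{lem:LocalAdmissibleAlgorithmErrorBound} respectively. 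Your explicit mention of Lemma~\ref{lem:NonExplosionExtendsToFullRDEs} for the full extension $\bm{f}_2$ is a detail the paper leaves implicit but is indeed needed.
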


\begin{proof}
Let $\rho:T_1^N(\R^{d+e+e^2}) \to T_1^N(\R^{e^2})$ be the projection onto the last $e^2$ coordinates. Then, using a similar notation for $\bm{v}$ as for $\bm{h}$, we have, using Lemma \ref{lem:LocalAdmissibleAlgorithmLocallyLipschitz}, and Lemma \ref{lem:LocalAdmissibleAlgorithmErrorBound},
\begin{align*}
\|\overline{\bm{v}}_{t_k,t_{k+1}} - \bm{v}^{t_k,\overline{\bz}_{t_k}}_{t_k,t_{k+1}}\| &= \|\widehat{\bm{v}}_{t_k}^{-1}\otimes A_N(\bm{f}_2,\widehat{\bm{v}}_{t_k},\overline{\bz}_{t_k,t_{k+1}}) - \widehat{\bm{v}}_{t_k}^{-1}\otimes \bm{v}(t_{k+1};t_k,\widehat{\bm{v}}_{t_k})\|\\
&\le C\|A_N(\bm{f}_2,\widehat{\bm{v}}_{t_k},\overline{\bz}_{t_k,t_{k+1}}) - \bm{v}(t_{k+1};t_k,\widehat{\bm{v}}_{t_k})\|\\
&\le C\|A_N(\bm{f}_2,\widehat{\bm{v}}_{t_k},\overline{\bz}_{t_k,t_{k+1}}) - A_N(\bm{f}_2,\widehat{\bm{v}}_{t_k},S_N(\bz(.;t_k,\overline{\bz}_{t_k}))_{t_k,t_{k+1}})\|\\
&\qquad + C\|A_N(\bm{f}_2,\widehat{\bm{v}}_{t_k},S_N(\bz(.;t_k,\overline{\bz}_{t_k})_{t_k,t_{k+1}})) - \bm{v}(t_{k+1};t_k,\widehat{\bm{v}}_{t_k})\|\\
&\le C\vertiii{S_N(\bz(.;t_k,\overline{\bz}_{t_k}))_{t_k,t_{k+1}}}^{N+1}\\
&\qquad + C\left\|\overline{\bz}_{t_k,t_{k+1}} - S_N(\bz(.;t_k,\overline{\bz}_{t_k}))_{t_k,t_{k+1}}\right\|\\
&\qquad + C\|\bz(.;t_k,\overline{\bz}_{t_k})\|_{p\var;[t_k,t_{k+1}]}^{N+1}.
\end{align*}

By \cite[Theorem 9.5]{friz2010multidimensional} and Theorem \ref{thm:RDEExistence},
\begin{align*}
\vertiii{S_N(\bz(.;t_k,\overline{\bz}_{t_k}))_{t_k,t_{k+1}}} &\le \|S_N(\bz(.;t_k,\overline{\bz}_{t_k}))\|_{p\var;[t_k,t_{k+1}]}\\
&\le C\|\bz(.;t_k,\overline{\bz}_{t_k})\|_{p\var;[t_k,t_{k+1}]}\\
&\le C\|\bx\|_{p\var;[t_k,t_{k+1}]} \le C\omega(t_k,t_{k+1})^{1/p}.
\end{align*}

By Lemma \ref{lem:ApproximationOfZ},
\begin{align*}
\left\|\overline{\bz}_{t_k,t_{k+1}} - S_N(\bz(.;t_k,\overline{\bz}_{t_k}))_{t_k,t_{k+1}}\right\| &\le C\omega(t_k,t_{k+1})^{\frac{N+1}{p}}.
\end{align*}

Putting everything together, we get
\begin{align*}
\|\overline{\bm{h}}_{t_k,t_{k+1}} - \bm{h}^{t_k,\overline{\bz}_{t_k}}_{t_k,t_{k+1}}\| &= \|\rho(\overline{\bm{v}}_{t_k,t_{k+1}}) - \rho(\bm{v}^{t_k,\overline{\bz}_{t_k}}_{t_k,t_{k+1}})\| \le \|\overline{\bm{v}}_{t_k,t_{k+1}} - \bm{v}^{t_k,\overline{\bz}_{t_k}}_{t_k,t_{k+1}}\|\\
&\le C\omega(t_k,t_{k+1})^{\frac{N+1}{p}}.\qedhere
\end{align*}
\end{proof}

\begin{lemma}\label{lem:ApproximationOfPsi}
Under the above assumptions, we have $$\|\overline{\Psi}(t_k) - \Psi(t_k,\overline{y}_{t_k})\| \le C\sum_{i=k+1}^n \left(\omega(t_{i-1},t_i)^{1/p}\sum_{j=1}^{i-1}\omega(t_{j-1},t_j)^{\frac{N+1}{p}} + \omega(t_{i-1},t_i)^{\frac{N+1}{p}}\right),$$ where $C = C(d,e,p,N,f,\omega(0,T),\|\bz_0\|)$ does not depend on the partition $(t_k)$, and also not on $k$.
\end{lemma}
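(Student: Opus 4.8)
The plan is to estimate the error $\|\overline{\Psi}(t_k) - \Psi(t_k,\overline{y}_{t_k})\|$ by comparing the numerical backward scheme for $\overline{\Psi}$ with the exact dual RDE, treating it as yet another instance of the ``local admissible 1-step scheme applied to a partition'' setup of Lemma \ref{lem:LocalAdmissibleAlgorithmAppliedToPartition}, but now with two sources of error: the discretization error of the scheme $A$ applied to the linear vector field $f_3$, and the error coming from the fact that we feed in the \emph{approximate} driving signatures $\overline{\bm{h}}_{t_k,t_{k+1}}$ rather than the exact ones $\bm{h}^{t_k,\overline{\bz}_{t_k}}_{t_k,t_{k+1}}$, which was controlled in Lemma \ref{lem:ApproximationOfH}. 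Recall that $\Psi^{t,\widehat{\by}}(s) = \Psi(s,\by(s;t,\widehat{\by}))$ solves the \emph{linear} backward RDE $-\dd\Psi^{t,\widehat{\by}}(s) = \Psi^{t,\widehat{\by}}(s)\sdd\bm{h}^{t,\widehat{\by}}_s$ driven by $\bm{h}$, with vector field $f_3$, which satisfies the $N$ non-explosion condition by \cite[Theorem 10.53]{friz2010multidimensional}.

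First I would set up a telescoping decomposition in the spirit of the proof of Lemma \ref{lem:LocalAdmissibleAlgorithmAppliedToPartition}: writing $\Psi^k(s)$ for the solution of the exact dual RDE on $[0,t_k]$ started from the numerical value $\overline{\Psi}(t_k)$ at time $t_k$ and run backward, one has $\overline{\Psi}(t_k) - \Psi(t_k,\overline{y}_{t_k}) = \sum_{i=k+1}^n \big(\Psi^{i}(t_k) - \Psi^{i-1}(t_k)\big)$ up to identifying endpoints correctly, so it suffices to bound each one-step discrepancy and then propagate it backward using the Lipschitz dependence of the dual RDE solution on initial condition and driving path (Theorem \ref{thm:RDEUniqueness} applied to $f_3$, whose solutions stay bounded because $f_3$ is linear and the $\overline{\bm h}$ are uniformly controlled). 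The one-step discrepancy at level $i$ has two contributions: (a) the local error of the 1-step scheme $A$ for $f_3$ over $[t_{i-1},t_i]$ with the \emph{exact} driver, which by Lemma \ref{lem:LocalAdmissibleAlgorithmErrorBound} is $O(\omega(t_{i-1},t_i)^{\frac{N+1}{p}})$; and (b) the effect of replacing the exact driving signature by $\overline{\bm h}_{t_{i-1},t_i}$, which by Lemma \ref{lem:LocalAdmissibleAlgorithmLocallyLipschitz} and Lemma \ref{lem:ApproximationOfH} contributes $C\|\overline{\bm h}_{t_{i-1},t_i} - \bm h^{t_{i-1},\overline{\bz}_{t_{i-1}}}_{t_{i-1},t_i}\| \le C\omega(t_{i-1},t_i)^{\frac{N+1}{p}}$ — but this has to be compared against the \emph{correct} base point for $\Psi$, which is $\overline{y}_{t_{i-1}}$ versus the flow-evolved value, and that mismatch is exactly where $\overline{y}$ differs from $\by(t_{i-1};\cdot)$, carrying a factor $\sum_{j=1}^{i-1}\omega(t_{j-1},t_j)^{\frac{N+1}{p}}$ from Lemma \ref{lem:LocalAdmissibleAlgorithmAppliedToPartition}.

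Then I would account for the backward propagation: an error introduced at time $t_i$ and carried back to $t_k$ is multiplied by a Lipschitz constant of the backward flow of the dual RDE over $[t_k,t_i]$, which (since $f_3$ is linear and $\bm h$ is uniformly $p$-bounded on $[0,T]$) is uniformly bounded by a constant depending only on $\omega(0,T)$; moreover, the discrepancy from perturbing the driver over $[t_{i-1},t_i]$ itself picks up the factor $\omega(t_{i-1},t_i)^{1/p}$ coming from $\rho_{p,\omega;[t_{i-1},t_i]}(\cdot,\cdot)\,\omega(t_{i-1},t_i)^{1/p}$ in Theorem \ref{thm:RDEUniqueness}, while the dependence on the base point $\overline{y}_{t_{i-1}}$ brings in $\sum_{j=1}^{i-1}\omega(t_{j-1},t_j)^{\frac{N+1}{p}}$. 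Summing the two contribution types over $i = k+1,\dots,n$ gives exactly $$C\sum_{i=k+1}^n\left(\omega(t_{i-1},t_i)^{1/p}\sum_{j=1}^{i-1}\omega(t_{j-1},t_j)^{\frac{N+1}{p}} + \omega(t_{i-1},t_i)^{\frac{N+1}{p}}\right),$$ with $C$ depending only on $d,e,p,N,f,\omega(0,T),\|\bz_0\|$ and not on the partition.

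The main obstacle I anticipate is bookkeeping the base-point mismatch cleanly: the exact object is $\Psi(t_k,\overline{y}_{t_k})$ evaluated at the \emph{numerical} trajectory, whereas the natural quantity produced by the backward flow is $\Psi^{t_k,\widehat{\by}}$ evaluated along the \emph{exact} flow $\by(s;t_k,\overline{\by}_{t_k})$; reconciling these requires using both that $\overline{y}_{t_k}$ is close to the true flow value (Lemma \ref{lem:LocalAdmissibleAlgorithmAppliedToPartition}) and the continuity of $\Psi$ in its spatial argument, which is where the inner sum $\sum_{j=1}^{i-1}\omega(t_{j-1},t_j)^{\frac{N+1}{p}}$ enters and must be handled uniformly in $i$. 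The rest is a routine-but-careful combination of the already-established Lipschitz estimates (Lemmas \ref{lem:LocalAdmissibleAlgorithmErrorBound}, \ref{lem:LocalAdmissibleAlgorithmLocallyLipschitz}, \ref{lem:ApproximationOfZ}, \ref{lem:ApproximationOfH}) and the uniform boundedness afforded by linearity of $f_3$.
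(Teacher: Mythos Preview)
Your proposal is correct and follows essentially the same route as the paper's proof: a telescoping decomposition interpolating between the fully numerical $\overline{\Psi}$ and the exact backward flow, backward propagation of one-step discrepancies via the Lipschitz estimate of Theorem \ref{thm:RDEUniqueness} for the linear vector field $f_3$, and a three-way split of each one-step discrepancy into (i) the base-point mismatch for $\bm h$ (true flow value versus $\overline{\by}_{t_{i-1}}$), which via Lemma \ref{lem:RoughIntegralWellDefined3} and Lemma \ref{lem:LocalAdmissibleAlgorithmAppliedToPartition} produces the cross term $\omega(t_{i-1},t_i)^{1/p}\sum_{j<i}\omega(t_{j-1},t_j)^{(N+1)/p}$, (ii) the local error of the scheme with the exact driver (Lemma \ref{lem:LocalAdmissibleAlgorithmErrorBound}), and (iii) the replacement of $\bm h^{t_{i-1},\overline{\bz}_{t_{i-1}}}$ by $\overline{\bm h}$ (Lemmas \ref{lem:LocalAdmissibleAlgorithmLocallyLipschitz} and \ref{lem:ApproximationOfH}).

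Two small points worth making explicit when you write it out. First, your telescope does not quite close: its endpoint is the exact backward flow started from the \emph{numerical} terminal value $\overline{\Psi}(T) = (\nabla g)(\overline{y}_T)$, whereas the target $\Psi(t_k,\overline{y}_{t_k})$ corresponds to the terminal value $(\nabla g)\big(y(T;t_k,\overline{y}_{t_k})\big)$; this boundary discrepancy is handled separately via $C^2$-regularity of $g$ and Lemma \ref{lem:LocalAdmissibleAlgorithmAppliedToPartition}, contributing another $C\sum_j \omega(t_{j-1},t_j)^{(N+1)/p}$ that is absorbed into the stated bound. Second, the ``continuity of $\Psi$ in its spatial argument'' you invoke is, in the paper, realised concretely as Lipschitz dependence of $\bm h$ on the base point (Lemma \ref{lem:RoughIntegralWellDefined3}, which requires the extra degree of regularity $f\in\mathcal{V}^{(N+1)+,N}$) followed by Theorem \ref{thm:RDEUniqueness} for the dual RDE---make sure to cite that lemma rather than appealing to smoothness of $\Psi$ directly.
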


\begin{proof}
We prove this lemma for the case $t_k = t_0 = 0$.

Note that\footnote{We recall the notation $\Psi^{t,z}(s) = \Psi(s, y(s; t,z))$. Also, by Theorem \ref{thm:RDEExactErrorRepresentationInLocalErrors}, $\Psi^{t,z}$ satisfies a RDE, and we again denote by $\Psi^{t,z}(s; u, \psi)$ the solution to this RDE if we start $\Psi^{t,z}$ at time $u$ at $\psi$.} $$\Psi(0,\overline{y}_0) = \Psi^{0,\overline{y}_0}(0) = \Psi^{0,\overline{y}_0}(0;T,\Psi^{0,\overline{y}_0}(T)).$$

Define $\Psi^k(t_j)$ on the grid $(t_j)$ such that $$\Psi^k(T) = (\nabla g)(\overline{y}_T),\qquad \Psi^k(t_j) =
\begin{cases}
\Psi^{0,\overline{\by}_0}(t_j;t_{j+1},\Psi^k(t_{j+1})),\qquad & j < k,\\
A_N(f_3,\Psi^k(t_{j+1}),\overline{\bm{h}}^{-1}_{t_j,t_{j+1}}),\qquad & j \ge k.
\end{cases}
$$
In particular, $\Psi^0(t_j) = \overline{\Psi}(t_j)$, and $\Psi^n(t_j) = \Psi^{0,\overline{\by}_0}(t_j; T, (\nabla g)(\overline{y}_T)),$ and $\Psi^n(0) = \Psi^{0,\overline{\by}_0}(0; T, (\nabla g)(\overline{y}_T)) = \Psi(0,y_0; T, (\nabla g)(\overline{y}_T)).$

We have
\begin{align}
\|\overline{\Psi}(0) - \Psi(0,y_0)\| &\le \sum_{k=1}^n \|\Psi^k(0) - \Psi^{k-1}(0)\| \label{eqn:PsiApproximationBound}\\
&\qquad + \left\|\Psi(0, y_0; T, (\nabla g)(\overline{y}_T)) - \Psi(0, y_0; T, (\nabla g)(y_T))\right\|. \nonumber
\end{align}

Let us first consider the last summand. By Theorem \ref{thm:RDEUniqueness},
\begin{align*}
\left\|\Psi(0, y_0; T, (\nabla g)(\overline{y}_T)) - \Psi(0, y_0; T, (\nabla g)(y_T))\right\| &\le C\left\|(\nabla g)(\overline{y}_T) - (\nabla g)(y_T)\right\|.
\end{align*}
Since $A$ is a local admissible 1-step scheme, $(\overline{y}_T$ stays uniformly bounded, independently of the partition (for sufficiently fine partitions). Since $g$ is twice continuously differentiable, $(\nabla g)$ is locally Lipschitz, and we conclude that $$\left\|\Psi(0, y_0; T, (\nabla g)(\overline{y}_T)) - \Psi(0, y_0; T, (\nabla g)(y_T))\right\| \le C\left\|\overline{y}_T - y_T\right\|.$$ Then, by Lemma \ref{lem:LocalAdmissibleAlgorithmAppliedToPartition}, $$\left\|\Psi(0, y_0; T, (\nabla g)(\overline{y}_T)) - \Psi(0, y_0; T, (\nabla g)(y_T))\right\| \le C\sum_{j=1}^n \omega(t_{j-1}, t_j)^{\frac{N+1}{p}}.$$

Now, consider the first summands of \eqref{eqn:PsiApproximationBound}. By Theorem \ref{thm:RDEUniqueness},
\begin{align*}
\|\Psi^k(0) - \Psi^{k-1}(0)\| &= \|\Psi^{0,\overline{y}_0}(0;t_k,\Psi^k(t_k)) - \Psi^{0,\overline{y}_0}(0;t_{k-1},\Psi^{k-1}(t_{k-1}))\|\\
&\le \|\Psi^{0,\overline{y}_0}(0;t_{k-1},\Psi^{0,\overline{y}_0}(t_{k-1};t_k,\Psi^k(t_k)))\\
&\qquad - \Psi^{0,\overline{y}_0}(0;t_{k-1},\Psi^{k-1}(t_{k-1}))\|\\
&\le C\|\Psi^{0,\overline{y}_0}(t_{k-1};t_k,\Psi^k(t_k)) - \Psi^{k-1}(t_{k-1})\|,
\end{align*}
where we also used that $\bm{h}^{0,\overline{\by}_0}$ is still controlled by (a constant times) $\omega$ by Lemma \ref{lem:RoughIntegralWellDefined1}. Estimating further, we get
\begin{align*}
\|\Psi^{0,\overline{y}_0} &(t_{k-1};t_k,\Psi^k(t_k)) - \Psi^{k-1}(t_{k-1})\|\\
&= \|\Psi^{t_{k-1},\by(t_{k-1};0,\overline{y}_0)}(t_{k-1};t_k,\Psi^{k-1}(t_k)) - A_N(f_3,\Psi^{k-1}(t_k),\overline{\bm{h}}^{-1}_{t_{k-1},t_k})\|\\
&\le \|\Psi^{t_{k-1},\by(t_{k-1};0,\overline{y}_0)}(t_{k-1};t_k,\Psi^{k-1}(t_k)) - \Psi^{t_{k-1},\overline{y}_{t_{k-1}}}(t_{k-1};t_k,\Psi^{k-1}(t_k))\|\\
&\qquad + \|\Psi^{t_{k-1},\overline{y}_{t_{k-1}}}(t_{k-1};t_k,\Psi^{k-1}(t_k)) - A_N(f_3,\Psi^{k-1}(t_k), \left(\bm{h}^{t_{k-1},\overline{\by}_{t_{k-1}}}_{t_{k-1},t_k}\right)^{-1})\|\\
&\qquad + \|A_N(f_3,\Psi^{k-1}(t_k), \left(\bm{h}^{t_{k-1},\overline{\by}_{t_{k-1}}}_{t_{k-1},t_k}\right)^{-1}) - A_N(f_3,\Psi^{k-1}(t_k),\overline{\bm{h}}^{-1}_{t_{k-1},t_k})\|.
\end{align*}

For the first term, we note that the only difference is that one of the terms is driven by the rough path $\bm{h}^{t_{k-1},\by(t_{k-1};0,\by_0)}$, and the other by the rough path $\bm{h}^{t_{k-1},\overline{\by}_{t_{k-1}}},$ both times only on the interval $[t_{k-1},t_k]$. By Lemma \ref{lem:RoughIntegralWellDefined3}, we see that $$\rho_{p,\omega;[t_{k-1},t_k]}(\bm{h}^{t_{k-1},\by(t_{k-1};0,\by_0)},\bm{h}^{t_{k-1},\overline{\by}_{t_{k-1}}})\le C\|y(t_{k-1};0,y_0) - \overline{y}_{t_{k-1}}\|.$$
By Lemma \ref{lem:LocalAdmissibleAlgorithmAppliedToPartition}, we have $$\rho_{p,\omega;[t_{k-1},t_k]}(\bm{h}^{t_{k-1},\by(t_{k-1};0,\by_0)},\bm{h}^{t_{k-1},\overline{\by}_{t_{k-1}}})\le C\sum_{j=1}^{k-1}\omega(t_{j-1},t_j)^{\frac{N+1}{p}}.$$ Then, by Theorem \ref{thm:RDEUniqueness},
\begin{align*}
\rho_{p,\omega;[t_{k-1},t_k]}(\Psi^{t_{k-1},y(t_{k-1};0,\overline{y}_0)} &(.;t_k,\Psi^{k-1}(t_k)), \Psi^{t_{k-1},\overline{y}_{t_{k-1}}}(.;t_k,\Psi^{k-1}(t_k)))\\
&\le C\sum_{j=1}^{k-1}\omega(t_{j-1},t_j)^{\frac{N+1}{p}}.
\end{align*}
Therefore,
\begin{align*}
\|\Psi^{t_{k-1},y(t_{k-1};0,\overline{y}_0)}(t_{k-1};t_k,\Psi^{k-1}(t_k)) &- \Psi^{t_{k-1},\overline{y}_{t_{k-1}}}(t_{k-1};t_k,\Psi^{k-1}(t_k))\|\\
&\le C\omega(t_{k-1},t_k)^{1/p}\sum_{j=1}^{k-1}\omega(t_{j-1},t_j)^{\frac{N+1}{p}}.
\end{align*}

For the second term, Lemma \ref{lem:LocalAdmissibleAlgorithmErrorBound} together with the fact that $\bm{h}$ is controlled by $C\omega$ (by Lemma \ref{lem:RoughIntegralWellDefined1}) implies that
\begin{align*}
\|\Psi^{t_{k-1},\overline{y}_{t_{k-1}}}(t_{k-1};t_k,\Psi^{k-1}(t_k)) &- A_N(f_3,\Psi^{k-1}(t_k), \left(\bm{h}^{t_{k-1},\overline{\by}_{t_{k-1}}}_{t_{k-1},t_k}\right)^{-1})\|\\
&\le C\omega(t_{k-1},t_k)^{\frac{N+1}{p}}.
\end{align*}

For the third term, we use Lemma \ref{lem:LocalAdmissibleAlgorithmLocallyLipschitz} to get
\begin{align*}
\|A_N(f_3,\Psi^{k-1}(t_k), &\left(\bm{h}^{t_{k-1},\overline{\by}_{t_{k-1}}}_{t_{k-1},t_k}\right)^{-1}) - A_N(f_3,\Psi^{k-1}(t_k),\overline{\bm{h}}^{-1}_{t_{k-1},t_k})\|\\
&\le C\left(\vertiii{\bm{h}^{t_{k-1},\overline{\by}_{t_{k-1}}}_{t_{k-1},t_k}}^{N+1} + \|\bm{h}^{t_{k-1},\overline{\by}_{t_{k-1}}}_{t_{k-1},t_k} - \overline{\bm{h}}_{t_{k-1},t_k}\|\right).
\end{align*}
Since $A$ is a local admissible 1-step scheme and $(\overline{\by})$ stays uniformly bounded, we can find a constant $C$ independent of the partition and $k$ such that $$\vertiii{\bm{h}^{t_{k-1},\overline{\by}_{t_{k-1}}}_{t_{k-1},t_k}} \le \|\bm{h}^{t_{k-1},\overline{\by}_{t_{k-1}}}\|_{p\var;[t_{k-1},t_k]} \le C\|\bx\|_{p\var;[t_{k-1},t_k]} \le C\omega(t_{k-1},t_k)^{1/p}$$ by Lemma \ref{lem:RoughIntegralWellDefined1}. By Lemma \ref{lem:ApproximationOfH}, $$\|\bm{h}^{t_{k-1},\overline{\by}_{t_{k-1}}}_{t_{k-1},t_k} - \overline{\bm{h}}_{t_{k-1},t_k}\| \le C\omega(t_{k-1},t_k)^{\frac{N+1}{p}}.$$

Putting everything together, we have
\begin{align*}
\|\Psi^k(0) - \Psi^{k-1}(0)\| &\le C\omega(t_{k-1},t_k)^{1/p}\sum_{j=1}^{k-1}\omega(t_{j-1},t_j)^{\frac{N+1}{p}} + C\omega(t_{k-1},t_k)^{\frac{N+1}{p}}.
\end{align*}

Therefore, $$\|\overline{\Psi}(0) - \Psi(0,\overline{y}_0)\| \le C\sum_{k=1}^n \left(\omega(t_{k-1},t_k)^{1/p}\sum_{j=1}^{k-1}\omega(t_{j-1},t_j)^{\frac{N+1}{p}} + \omega(t_{k-1},t_k)^{\frac{N+1}{p}}\right).$$
\end{proof}

\begin{theorem}
Under the above assumptions, we have
\begin{align*}
\bigg\|y_T &- \overline{y}_T - \sum_{k=1}^n \overline{\Psi}(t_k)e(t_k)\bigg\|\\
&\le C\sum_{k=1}^n \sum_{i=k}^n \left(\omega(t_{i-1},t_i)^{1/p}\sum_{j=1}^{i-1}\omega(t_{j-1},t_j)^{\frac{N+1}{p}} + \omega(t_{i-1},t_i)^{\frac{N+1}{p}}\right)\omega(t_{k-1},t_k)^{\frac{N+1}{p}},
\end{align*}
where $C=C(d,e,p,N,f,\omega(0,T),\|y_0\|)$ does not depend on the partition $(t_k)$.
\end{theorem}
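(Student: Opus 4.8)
The plan is to start from the exact error representation of Theorem~\ref{thm:RDEExactErrorRepresentationInLocalErrors}, applied here with $g=\id$ (for a general payoff $g\in C^2$ one simply replaces $y_T-\overline{y}_T$ by $g(y_T)-g(\overline{y}_T)$ throughout, the argument being verbatim the same), which reads
\begin{align*}
y_T-\overline{y}_T=\sum_{k=1}^n\left(\int_0^1\Psi\bigl(t_k,\overline{y}_{t_k}+se(t_k)\bigr)\sdd s\right)e(t_k).
\end{align*}
Subtracting $\sum_{k=1}^n\overline{\Psi}(t_k)e(t_k)$ and applying the triangle inequality, it suffices to bound, for each $k$, the product $\bigl(\|R_k^{(1)}\|+\|R_k^{(2)}\|\bigr)\|e(t_k)\|$, where $R_k^{(1)}\coloneqq\int_0^1\bigl(\Psi(t_k,\overline{y}_{t_k}+se(t_k))-\Psi(t_k,\overline{y}_{t_k})\bigr)\sdd s$ and $R_k^{(2)}\coloneqq\Psi(t_k,\overline{y}_{t_k})-\overline{\Psi}(t_k)$.

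First I would control the local errors. Since $A$ is a local admissible $1$-step scheme and the partition is sufficiently fine, condition~2 of Definition~\ref{def:LocalAdmissibleAlgorithm} confines all iterates $\overline{y}_{t_j}$ to a fixed ball independent of the partition, so Lemma~\ref{lem:LocalAdmissibleAlgorithmErrorBound}, applied on each $[t_{k-1},t_k]$ with $\omega$ restricted there, gives $\|e(t_k)\|\le C\omega(t_{k-1},t_k)^{\frac{N+1}{p}}$ with $C$ uniform in $k$ and in the partition. For $R_k^{(1)}$ I would use that, as in the proof of Theorem~\ref{thm:RDEExactErrorRepresentationInLocalErrors} (via \cite[Ch.~11]{friz2010multidimensional}), $\Psi(t,z)=\nabla g(y(T;t,z))\,\nabla_z y(T;t,z)$ is $C^1$, hence locally Lipschitz, with a Lipschitz constant that stays bounded uniformly over $t\in[0,T]$ on the relevant ball (the flow estimates being controlled by $\omega(0,T)$); since $\overline{y}_{t_k}+se(t_k)$ remains in that ball for $s\in[0,1]$, this yields $\|R_k^{(1)}\|\le C\|e(t_k)\|\le C\omega(t_{k-1},t_k)^{\frac{N+1}{p}}$. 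For $R_k^{(2)}$ I would invoke Lemma~\ref{lem:ApproximationOfPsi}, whose proof, run starting from $t_k$ instead of $0$, gives
\begin{align*}
\|R_k^{(2)}\|\le C\sum_{i=k+1}^n\left(\omega(t_{i-1},t_i)^{1/p}\sum_{j=1}^{i-1}\omega(t_{j-1},t_j)^{\frac{N+1}{p}}+\omega(t_{i-1},t_i)^{\frac{N+1}{p}}\right).
\end{align*}

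Assembling these bounds, the $k$-th summand of the global error is at most
\begin{align*}
C\left(\omega(t_{k-1},t_k)^{\frac{N+1}{p}}+\sum_{i=k+1}^n\left(\omega(t_{i-1},t_i)^{1/p}\sum_{j=1}^{i-1}\omega(t_{j-1},t_j)^{\frac{N+1}{p}}+\omega(t_{i-1},t_i)^{\frac{N+1}{p}}\right)\right)\omega(t_{k-1},t_k)^{\frac{N+1}{p}}.
\end{align*}
The leftover term $C\,\omega(t_{k-1},t_k)^{\frac{N+1}{p}}\cdot\omega(t_{k-1},t_k)^{\frac{N+1}{p}}$ coming from $R_k^{(1)}$ is dominated by a constant multiple of the $i=k$ term of the double sum in the statement; including it therefore just extends the inner summation to $i=k,\dots,n$, and summing over $k$ produces exactly the claimed estimate.

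The only genuinely substantial input is Lemma~\ref{lem:ApproximationOfPsi}, which is already proved, so the remaining work is essentially bookkeeping; the point to watch is that the three constants involved — the local-error constant of Lemma~\ref{lem:LocalAdmissibleAlgorithmErrorBound}, the Lipschitz constant of $\Psi(t_k,\cdot)$, and the constant of Lemma~\ref{lem:ApproximationOfPsi} — can all be chosen uniformly in $k$ and in the partition. This rests on the locality of $A$ keeping $\{\overline{y}_{t_k}\}$ in a fixed ball, together with the rough-flow regularity estimates guaranteeing that the Lipschitz bound for $\Psi(t,\cdot)$ does not degenerate as $t\uparrow T$.
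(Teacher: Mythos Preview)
Your proposal is correct and follows essentially the same route as the paper: you start from the exact error representation of Theorem~\ref{thm:RDEExactErrorRepresentationInLocalErrors}, split $\Psi(t_k,\overline{y}_{t_k}+se(t_k))-\overline{\Psi}(t_k)$ into a Lipschitz part (handled via the $C^1$ regularity of $\Psi$ in $z$, which in the paper is justified through the assumption $f\in\mathcal{V}^{(N+1)+,N}$ and \cite[Theorem 11.6]{friz2010multidimensional}) and the approximation error of Lemma~\ref{lem:ApproximationOfPsi}, bound $\|e(t_k)\|$ by Lemma~\ref{lem:LocalAdmissibleAlgorithmErrorBound}, and absorb the $R_k^{(1)}$ contribution into the $i=k$ term of the double sum. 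The only cosmetic difference is that the paper bounds the integrand directly rather than naming the two pieces $R_k^{(1)}$, $R_k^{(2)}$.
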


\begin{remark}
If the partition $(t_k)$ is chosen such that $$\omega(t_k,t_{k+1}) \le \frac{\omega(0,T)}{n}$$ for all $k$, then
\begin{align*}
\bigg\|y_T - \overline{y}_T &- \sum_{k=1}^n \overline{\Psi}(t_k)e(t_k)\bigg\| \le Cn^{3-\frac{2N+3}{p}}.
\end{align*}
Note that by Lemma \ref{lem:LocalAdmissibleAlgorithmAppliedToPartition}, we have $$\|y_T - \overline{y}_T\|\le C n^{1-\frac{N+1}{p}}.$$ Hence, the theorem shows that correcting the approximation using the error representation formula leads to a faster rate of convergence if $$3 - \frac{2N+3}{p} < 1 - \frac{N+1}{p}\quad\Leftrightarrow\quad N > 2p-2.$$
\end{remark}

\begin{proof}
Recall that $\Psi(t,z)$ is the derivative of $y(T;t,z)$ in $z$. Since $f\in \mathcal{V}^{N+1,N}$ and $p<N$, $z\mapsto y(T; t, z)$ is twice continuously differentiable by \cite[Theorem 11.6]{friz2010multidimensional}. Since the 1-step scheme $A$ is local, we know that $\overline{y}$ stays uniformly bounded, uniformly over the partition. Hence, we may restrict ourselves to a bounded (hence, compact) set, and can therefore assume that the above map has a bounded second derivative. This in turn implies that $\Psi(t,.)$ is Lipschitz continuous. It is easy to see that this Lipschitz constant can also be chosen independently of $t$. This discussion implies that
\begin{align*}
\|\Psi(t_k,\overline{y}_{t_k} &+ se(t_k)) - \overline{\Psi}(t_k)\|\\
&\le \|\Psi(t_k,\overline{y}_{t_k} + se(t_k)) - \Psi(t_k,\overline{y}_{t_k})\| + \|\Psi(t_k,\overline{y}_{t_k}) - \overline{\Psi}(t_k)\|\\
&\le Cs\|e(t_k)\| + C\sum_{i=k+1}^n \left(\omega(t_{i-1},t_i)^{1/p}\sum_{j=1}^{i-1}\omega(t_{j-1},t_j)^{\frac{N+1}{p}} + \omega(t_{i-1},t_i)^{\frac{N+1}{p}}\right)\\
&\le C\omega(t_{k-1},t_k)^{\frac{N+1}{p}}\\
&\qquad + C\sum_{i=k+1}^n \left(\omega(t_{i-1},t_i)^{1/p}\sum_{j=1}^{i-1}\omega(t_{j-1},t_j)^{\frac{N+1}{p}} + \omega(t_{i-1},t_i)^{\frac{N+1}{p}}\right),
\end{align*}
where we have used Lemma \ref{lem:ApproximationOfPsi} and Lemma \ref{lem:LocalAdmissibleAlgorithmErrorBound}.

Then, Theorem \ref{thm:RDEExactErrorRepresentationInLocalErrors} and Lemma \ref{lem:LocalAdmissibleAlgorithmErrorBound} imply that 
\begin{align*}
\Bigg\|y_T &- \overline{y}_T - \sum_{k=1}^n \overline{\Psi}(t_k)e(t_k)\Bigg\|\\
&= \left\|\sum_{k=1}^n \left(\int_0^1\left(\Psi(t_k,\overline{y}_{t_k} + se(t_k)) - \overline{\Psi}(t_k)\right)ds\right)e(t_k)\right\|\\
&\le C\sum_{k=1}^n \Bigg(\omega(t_{k-1},t_k)^{\frac{N+1}{p}}\\
&\qquad + \sum_{i=k+1}^n \left(\omega(t_{i-1},t_i)^{1/p}\sum_{j=1}^{i-1}\omega(t_{j-1},t_j)^{\frac{N+1}{p}} + \omega(t_{i-1},t_i)^{\frac{N+1}{p}}\right)\Bigg)\|e(t_k)\|\\
&\le C\sum_{k=1}^n \sum_{i=k}^n \left(\omega(t_{i-1},t_i)^{1/p}\sum_{j=1}^{i-1}\omega(t_{j-1},t_j)^{\frac{N+1}{p}} + \omega(t_{i-1},t_i)^{\frac{N+1}{p}}\right)\omega(t_{k-1},t_k)^{\frac{N+1}{p}}.
\end{align*}
\end{proof}

\section{Application to the Log-ODE method}\label{sec:LogODE}

Assume we are given $\bg\in G^N(\R^d)$, $f\in \mathcal{V}^{N+,N}(\R^d,\R^e)$, and $y_0\in \R^e$. Consider the ODE
\begin{equation}\label{eqn:LOGODEDefinition}
\dd y_t = \sum_{k=1}^N f^{\circ k}\pi_k(\log_N(\bg))(\id)(y_t)\sdd t,\qquad y_0 = y_0.
\end{equation}
Here, $$f^{\circ 1} = f,\qquad\text{and}\qquad f^{\circ (k+1)} = D(f^{\circ k}) f,$$ are the iterated vector field derivatives of $f$, $\log_N$ is the tensor algebra logarithm defined via the power series expansion as in Appendix \ref{sec:Euler}, and $\pi_k$ is the projection onto the $k$-th level in the tensor algebra. If the solution $y$ to this ODE exists on $[0,1]$ and is unique, we define $$A^{\textup{Log-ODE}}_N(f,y_0,\bg) \coloneqq y_1.$$ We remark that \eqref{eqn:LOGODEDefinition} may look quite intimidating on first sight, yet it is merely an ODE which can be solved with standard ODE solvers. The upcoming Lemma \ref{lem:LogODEIsRDE} will give some geometrical insight into the Log-ODE method and \eqref{eqn:LOGODEDefinition}.

\begin{lemma}\label{lem:XGNRoughPath}
Let $\bg\in G^N(\R^d)$, and define the path $$\bx^{\bg}_{s,t} \coloneqq \exp_N((t-s)\log_N(\bg)),$$ where $$\exp_N(\bm{v}) \coloneqq \sum_{n=0}^N \frac{v^{\otimes n}}{n!}.$$ Then, $\bx^{\bg}\in C^{N\var}([0,1],\R^d)$ is controlled by $\omega(s,t) = C\|\bg\|^N(t-s)$, where $C = C(N)$.
\end{lemma}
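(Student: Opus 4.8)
The plan is to realise $\bx^{\bg}$ as (the increment process of) a one‑parameter subgroup of $G^N(\R^d)$ and then estimate its levels one by one.

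\emph{Algebraic setup.} Set $\bm v := \log_N(\bg)$. Since $\bg$ is group‑like, $\bm v$ is a Lie element of the truncated tensor algebra, so $t\bm v$ is a Lie element for every $t\in\R$ and hence $\exp_N(t\bm v)\in G^N(\R^d)$. All truncated tensor powers of $\bm v$ commute, and for $m>N$ the truncated power $\bm v^{\otimes m}$ vanishes, so the usual binomial computation gives $\exp_N(a\bm v)\otimes\exp_N(b\bm v)=\exp_N((a+b)\bm v)$; in particular $\bx^{\bg}_{s,t}=\bigl(\exp_N(s\bm v)\bigr)^{-1}\otimes\exp_N(t\bm v)$ satisfies Chen's relation, and $t\mapsto\exp_N(t\bm v)$ is polynomial in $t$, hence a continuous $G^N(\R^d)$‑valued path. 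Thus $\bx^{\bg}$ is a continuous path into $G^N(\R^d)$, and it only remains to control its $N$‑variation.

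\emph{The variation estimate.} It suffices to show $\|\pi_k(\bx^{\bg}_{s,t})\|\le C(N)\,\|\bg\|^{k}\,(t-s)$ for $k=1,\dots,N$: since $t-s\le 1$ this is $\le C(N)\|\bg\|^k(t-s)^{k/N}\le\bigl(C\|\bg\|^N(t-s)\bigr)^{k/N}$ after enlarging the constant, which is exactly control by $\omega(s,t)=C\|\bg\|^N(t-s)$ (and $\omega$ is plainly a control function). Expanding, $\pi_k(\bx^{\bg}_{s,t})=\sum_{n=1}^{k}\tfrac{(t-s)^n}{n!}\,\pi_k(\bm v^{\otimes n})$, since $\pi_k(\bm v^{\otimes n})=0$ for $n>k$ (as $\bm v$ has no level‑$0$ component) and for $n=0$. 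Next I use that $\log_N$ commutes with the dilations $\delta_\lambda$ (which are algebra endomorphisms of the truncated tensor algebra), so $\pi_i(\bm v)$ is $\delta$‑homogeneous of degree $i$; together with continuity of $\log_N$ on the compact unit sphere of $G^N(\R^d)$ this yields $\|\pi_i(\bm v)\|\le C(N)\|\bg\|^i$ (i.e.\ $\log_N$ is Lipschitz for the homogeneous norms; see \cite{friz2010multidimensional}). Hence $\|\pi_k(\bm v^{\otimes n})\|\le\sum_{i_1+\dots+i_n=k}\prod_j\|\pi_{i_j}(\bm v)\|\le C(N)\|\bg\|^k$ by submultiplicativity of the tensor norm, and, using $(t-s)^n\le t-s$ for $n\ge1$, $\|\pi_k(\bx^{\bg}_{s,t})\|\le C(N)\|\bg\|^k(t-s)$, as required.

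\emph{Main obstacle.} The conceptual content is entirely in the two observations that (i) in $\pi_k(\exp_N((t-s)\bm v))$ only the terms $1\le n\le k$ survive, each carrying a factor $(t-s)^n$ with $n\ge1$ (so one gains the single power $(t-s)$ uniformly in the level), and (ii) $\log_N$ scales correctly under dilations, which forces $\|\pi_i(\log_N\bg)\|\le C(N)\|\bg\|^i$. Everything else is bookkeeping of $N$‑ and $d$‑dependent constants and matching the normalisation of ``controlled by $\omega$''; I expect the paper to simply quote the homogeneity and continuity of $\log_N$ and $\exp_N$ from \cite{friz2010multidimensional} rather than reprove them.
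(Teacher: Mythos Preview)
Your proof is correct and follows essentially the same route as the paper: both arguments reduce to the level-wise bound $\|\pi_k(\exp_N((t-s)\log_N\bg))\|\le C(N)\|\bg\|^k(t-s)$ and then pass to the homogeneous norm. The only cosmetic difference is that the paper obtains $\|\pi_i(\log_N\bg)\|\le C\|\bg\|^i$ by directly expanding the logarithmic series, whereas you invoke dilation-homogeneity plus compactness; both are standard and the paper's remaining steps (group-like values via the free Lie algebra, passage to $\omega$) match yours.
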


\begin{proof}
Indeed, for $u\in[0,1]$, we have 
\begin{align*}
\left\|\pi_k\left(\exp_N\left(u\log_N(\bg)\right)\right)\right\| &= \left\|\pi_k\left(\sum_{\ell=0}^N\frac{1}{\ell!}\left(u\sum_{n=1}^N \frac{(-1)^{n+1}}{n}(\bg-\bm{1})^{\otimes n}\right)^{\otimes \ell}\right)\right\|\\
&\le C\|\bg\|^k u,
\end{align*}
where $C = C(N)$. Hence, $$\vertiii{{\bx}_{s,t}} \le C\|\bg\| (t-s)^{1/N},$$ and $\bx$ is controlled by $\omega(s,t) = C\|\bg\|^N(t-s).$

That $\bx$ takes values in $G^N(\R^d)$ follows from the fact that $\log_N$ and $\exp_N$ are inverses of each other between $G^N$ and the step-$N$ free Lie algebra. This free Lie algebra is a vector space, which means that since $\log_N(\bg)$ is in the free Lie algebra, so is $u\log_N(\bg)$.
\end{proof}

\begin{lemma}\label{lem:LogODEIsRDE}
Let $\bg\in G^N(\R^d)$, let $f\in \mathcal{V}^{N+,N}(\R^d,\R^e)$, and let $y_0\in\R^e$. Then both the ODE $$\dd y_t = \sum_{k=1}^N f^{\circ k}\pi_k(\log_N(\bg))(\id)(y_t)\sdd t,\qquad y_0 = y_0$$ and the RDE $$\dd y_t = f(y_t)\sdd\bx^{\bg}_t,\qquad y_0=y_0$$ admit a unique solution, and these solutions agree.
\end{lemma}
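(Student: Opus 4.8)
\emph{Setup and reductions.} By Lemma~\ref{lem:XGNRoughPath} the path $\bx^{\bg}$ lies in $C^{N\var}([0,1],\R^d)$, and since $f\in\mathcal{V}^{N+,N}$ with $N+>N\ge 1$, Theorem~\ref{thm:RDEUniqueness} already yields a unique solution $y^{\mathrm{RDE}}$ of $\dd y=f(y)\sdd\bx^{\bg}$ on $[0,1]$, together with an a priori bound $\|y^{\mathrm{RDE}}\|_\infty\le M$ depending only on $f,N,\|y_0\|$. For the ODE~\eqref{eqn:LOGODEDefinition}, its driving vector field $F:=\sum_{k=1}^N f^{\circ k}(\pi_k(\log_N(\bg)))(\id)$ is assembled from the iterated derivatives $f^{\circ 1},\dots,f^{\circ N}$; since $f\in\mathcal{V}^{N+,N}$ is locally $C^N$, each $f^{\circ k}$ ($k\le N$) is at least locally $C^1$, so $F$ is locally Lipschitz and the ODE admits a unique maximal solution. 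The plan is to show this solution does not explode before time $1$ and agrees there with $y^{\mathrm{RDE}}$. As in Section~\ref{sec:MinorExtensions}, it suffices to treat $f\in L(\R^d,\lip^{N+\eps}(\R^e,\R^e))$ bounded: restricting $f$ to a ball large enough to contain both $y^{\mathrm{RDE}}$ (bounded by $M$) and the ODE solution on any compact subinterval of its existence interval, and extending via Theorem~\ref{thm:WhitneysTheoremStein}, reduces to the bounded case. The only subtlety is the apparent circularity — non-explosion controls $y^{\mathrm{RDE}}$ but not a priori the ODE — which is resolved by carrying out the comparison on the maximal interval of existence of the ODE and using the equality with the globally defined, bounded $y^{\mathrm{RDE}}$ to exclude blow-up.

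\emph{The key algebraic observation.} Assume now $f$ is bounded $\lip^{N+\eps}$, so both solutions are global. The point is that $\bx^{\bg}$ is ``log-linear'': for every $s\le t$ one has $\log_N(\bx^{\bg}_{s,t})=(t-s)\log_N(\bg)=(t-s)\,\ell$ with $\ell:=\log_N(\bg)$. Since each $f^{\circ k}$ is linear in its tensor argument, the log-ODE vector field associated with the increment $\bx^{\bg}_{s,t}$ is $\sum_{k=1}^N f^{\circ k}(\pi_k((t-s)\ell))=(t-s)F$, so after rescaling time the corresponding log-ODE step equals $\Phi^F_{t-s}$, where $(\Phi^F_\tau)_{\tau\ge 0}$ is the flow of the autonomous ODE $\dd y=F(y)\sdd\tau$. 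Hence, for \emph{any} partition $0=s_0<\dots<s_m=1$, the iterated log-ODE scheme started at $y_0$ telescopes by the flow property:
\[
\Phi^F_{s_m-s_{m-1}}\circ\cdots\circ\Phi^F_{s_1-s_0}(y_0)=\Phi^F_{1}(y_0)=y^{\mathrm{ODE}}_1,
\]
independently of the partition.

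\emph{Passing to the limit.} It remains to identify $\Phi^F_1(y_0)$ with $y^{\mathrm{RDE}}_1$. For this we use the convergence of the log-ODE method for bounded $\lip^{N+\eps}$ vector fields and genuine (weakly geometric) $N$-rough paths, namely Davie's estimate for the step-$N$ Euler scheme (e.g.\ \cite[Thm.~10.30]{friz2010multidimensional}) together with the standard tensor-algebra comparison showing that one log-ODE step and one step-$N$ Euler step differ by $O(\omega(s,t)^{(N+1)/N})$, see \cite{lyons2014rough,boutaib2013dimension}. With the control $\omega(s,t)=C\|\bg\|^N(t-s)$ from Lemma~\ref{lem:XGNRoughPath}, the iterated log-ODE approximation over the dyadic partition of mesh $2^{-m}$ converges to $y^{\mathrm{RDE}}_1$ as $m\to\infty$; since by the previous paragraph this approximation equals $y^{\mathrm{ODE}}_1$ for every $m$, we obtain $y^{\mathrm{ODE}}_1=y^{\mathrm{RDE}}_1$. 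Running the same argument on every subinterval $[0,t]$ gives $y^{\mathrm{ODE}}_t=y^{\mathrm{RDE}}_t$ for all $t\in[0,1]$; in particular the ODE solution stays in the ball on which $f$ was unaltered, so it is global and, undoing the localization, coincides with $y^{\mathrm{RDE}}$ in the general case. (Alternatively, one can avoid citing the log-ODE rate: on each dyadic piece $\bx^{\bg}_{s_j,s_{j+1}}\in G^N(\R^d)$ is the level-$N$ signature of a geodesic, hence finite variation, path $\gamma_j$ with $\|\gamma_j\|_{1\var}\le C2^{-m/N}$ as in the proof of Lemma~\ref{lem:XGNRoughPath}; concatenating gives smooth $x^{(m)}$ with $\sup_m\|S_N(x^{(m)})\|_{N\var}<\infty$ and $S_N(x^{(m)})\to\bx^{\bg}$ uniformly, so the associated ODE solutions — which are RDE solutions by Lemma~\ref{lem:RDESolutionIsODESolution} — converge to $y^{\mathrm{RDE}}$, while Davie's estimate on each piece identifies the limit with $\Phi^F_1(y_0)$.)

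\emph{Main obstacle.} The one place requiring genuine input is the comparison between a single log-ODE step and a single step-$N$ Euler step — equivalently, the statement that the time-one flow of $\sum_{k\le N}f^{\circ k}(\pi_k(\cdot))$ reproduces the step-$N$ Taylor expansion up to order $N$ — which is the classical justification of the log-ODE method \cite{lyons2014rough,boutaib2013dimension}; we take it from the literature. Everything else is bookkeeping: the telescoping is \emph{exact} because $\bx^{\bg}$ has linearly growing log-signature, and the localization is a routine variant of the arguments of Section~\ref{sec:MinorExtensions}, the only care being the order in which explosion of the ODE is ruled out.
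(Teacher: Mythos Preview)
Your proof is correct and uses the same ingredients as the paper---Lemma~\ref{lem:XGNRoughPath}, Davie's Euler estimate \cite[Theorem~10.30]{friz2010multidimensional}, the log-ODE/Euler local comparison from \cite{boutaib2013dimension}, and the same localization for unbounded $f$---but you assemble them in a slightly more direct way. The paper shows that iterated \emph{Euler} steps converge to the RDE solution, and then bridges Euler to the ODE solution via a hybrid interpolation $y^{n,k}$ (Euler on $[0,k2^{-n}]$, flow of $F$ afterwards), bounding consecutive hybrids by comparing one Euler step and one log-ODE step to the true local RDE solution. You instead observe first that, because $\log_N(\bx^{\bg}_{s,t})=(t-s)\log_N(\bg)$, the iterated \emph{log-ODE} scheme along $\bx^{\bg}$ equals the ODE flow $\Phi^F$ \emph{exactly}, and then quote the convergence of the iterated log-ODE scheme to the RDE solution. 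This is a genuine streamlining: your telescoping is trivial (flow semigroup), whereas the paper's telescoping requires propagating local errors through the flow via \cite[Theorem~3.8]{friz2010multidimensional}. The trade-off is that your ``iterated log-ODE $\to$ RDE'' step bundles together both external estimates, so one should check there is no circularity with the present lemma; you avoid this correctly by citing \cite{boutaib2013dimension} directly for the local log-ODE rate rather than the paper's Lemma~\ref{lem:LocalAdmissibleAlgorithmAppliedToPartition} (which is proved downstream). Your handling of the localization and the non-explosion of the ODE is the same contradiction argument the paper gives.
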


\begin{proof}
We assume first that $f\in L(\R^d,\lip^{N+\eps}(\R^e,\R^e))$. By Lemma \ref{lem:XGNRoughPath}, $\bx^{\bg}\in C^{N\var}([0,1],\R^d)$, and by \cite[Theorem 10.26]{friz2010multidimensional}, the RDE admits a unique solution $y$. Similarly, the vector field of the ODE is bounded and Lipschitz, so there also exists a unique solution $\widetilde{y}$ to the ODE by Picard-Lindelöf. It remains to show that $y = \widetilde{y}$.

Let $\omega(s,t) \coloneqq C\|f\|_{\lip^{N+\eps}}^N\|\bg\|^N(t-s),$ where $C$ is the constant in Lemma \ref{lem:XGNRoughPath}. For every $n\in\N$, let $\mathcal{P}_n = (k2^{-n})_{k=0}^{2^n}$ be a partition of $[0,1]$. Recall the Euler scheme $A^{\textup{Euler}}$ from Appendix \ref{sec:Euler}, and define $(y^n_{k2^{-n}})_{k=0}^n$ to be the Euler approximation of $y$, i.e. $A^{\textup{Euler}}_N$ applied on the partition $\mathcal{P}^n$. By \cite[Theorem 10.30]{friz2010multidimensional}, there exists a constant $C = C(N,N+\eps)$, such that
\begin{align*}
\sup_{k=0,\dots,n}\|y_{t_k^n} - y^n_{t_k^n}\| &\le C e^{C\omega(0,T)}\sum_{k=1}^{2^n}\omega((k-1)2^{-n},k2^{-n})^{\frac{N+1}{N}} \le C 2^{-n/N}.
\end{align*}
In particular, the linear interpolations of $y^n$ (which we again call $y^n$) converge uniformly to $y$.

Now, for all $k$ and $n$, let $y^{n,k}$ be defined to be the linear interpolation of $y^n$ on $[0,k2^{-n}]$, and the solution of $$\dd y_t = \sum_{j=1}^N f^{\circ j}\pi_j(\log_N(\bg))(\id)(y_t)\sdd t,\qquad y_{k2^{-n}} = y^n_{k2^{-n}}$$ on $[k2^{-n},1]$. Of course, $y^{n,0} = \widetilde{y}$, and $y^{n,2^n} = y^n$.

Let $t\in[0,1]$. We have to show that $\|y_t - \widetilde{y}_t\| = 0.$ As $$\|y_t-\widetilde{y}_t\| \le \|y_t - y^n_t\| + \|y^n_t-\widetilde{y}_t\|,$$ and since we have already shown $\|y_t-y^n_t\|\to 0$, it remains to prove $\|y^n_t-\widetilde{y}_t\| \to 0.$ By continuity (and since the sequence $(y^n)$ is equicontinuous, converging to $y$), we may assume that $t$ is a dyadic number, say $t=\ell 2^{-m}$. Let us then assume in the following discussion that $n\ge m$. Then, 
\begin{align*}
\|y^n_t - \widetilde{y}_t\| &= \|y^{n,\ell 2^{n-m}}_t - y^{n,0}_t\| \le \sum_{k=1}^{\ell 2^{n-m}}\|y^{n,k}_t - y^{n,k-1}_t\|.
\end{align*}

Now, we may apply \cite[Theorem 3.8]{friz2010multidimensional} to find a constant $C$ independent of $k$ and $n$ such that 
\begin{align*}
\|y^n_t - \widetilde{y}_t\| &\le C\sum_{k=1}^{\ell 2^{n-m}}\|y^{n,k}_{k2^{-n}} - y^{n,k-1}_{k2^{-n}}\|.
\end{align*}

Recall next that
\begin{align*}
y^{n,k}_{k2^{-n}} &= A^{\textup{Euler}}_N(f,y^n_{(k-1)2^{-n}},\bx^{\bg}_{(k-1)2^{-n},k2^{-n}}).
\end{align*}

Also, $y^{n,k-1}_t$ is the solution to 
$$y^{n,k-1}_{k2^{-n}} = y^n_{(k-1)2^{-n}} + \int_{(k-1)2^{-n}}^{k2^{-n}} \sum_{j=1}^N f^{\circ j}\pi_j(\log_N(\bg))(\id)(y_t^{n,k-1})\sdd t.$$
Now, it is easy to see that in fact,
\begin{align*}
y^{n,k-1}_{k2^{-n}} &= A^{\textup{Log-ODE}}_N(f,y^n_{(k-1)2^{-n}},\exp_N(2^{-n}\log_N(\bg)))\\
&= A^{\textup{Log-ODE}}_N(f,y^n_{(k-1)2^{-n}}, \bx^{\bg}_{(k-1)2^{-n},k2^{-n}}).
\end{align*}
Defining $y^{n,k,\textup{true}}_{k2^{-n}}$ to be the solution at time $k2^{-n}$ to the RDE $$dy_t = f(y_t) d\bx^{\bg}_t,\qquad y_{(k-1)2^{-n}} = y^n_{(k-1)2^{-n}},$$ we see with Lemma \ref{lem:XGNRoughPath}, \cite[Theorem 10.30]{friz2010multidimensional}, and \cite[Theorem 18]{boutaib2013dimension} that $$\|y^{n,k}_{k2^{-n}} - y^{n,k-1}_{k2^{-n}}\| \le \|y^{n,k}_{k2^{-n}} - y^{n,k,\textup{true}}_{k2^{-n}}\| + \|y^{n,k,\textup{true}}_{k2^{-n}} - y^{n,k-1}_{k2^{-n}}\| \le C\|\bg\|^{N+1}2^{-n\frac{N+1}{N}}.$$ Hence, $$\|y^n_t - \widetilde{y}_t\| \le C\ell 2^{n-m}\|\bg\|^{N+1}2^{-n\frac{N+1}{N}} \to 0$$ as $n\to\infty.$ This proves the lemma for $f\in L(\R^d,\lip^{N+\eps}(\R^e,\R^e))$.

Now, let $f\in\mathcal{V}^{N+\eps,N}(\R^d,\R^e)$. By Theorem \ref{thm:RDEUniqueness}, the RDE still admits a unique solution. Conversely, since the vector field in the ODE is still locally Lipschitz, and since uniqueness is a local issue, the solution to the ODE is unique, if it exists. Since the ODE vector field is continuous, a unique solution exists locally, potentially up to some explosion time $\tau$. If the ODE solution does not explode, then both the RDE and the ODE solution stay bounded, and we may replace $f$ with a bounded vector field using Theorem \ref{thm:WhitneysTheoremStein}, and can apply the case that we have already proved. Conversely, if explosion of the ODE solution does happen, then the ODE and the RDE solution must already differ at some time $\tau-\delta\in(0,\tau)$. We may then consider the ODE and the RDE on the time interval $[0,\tau-\delta]$. On this interval, solutions to both the ODE and RDE exist, and no explosion happens. We may thus again apply the bounded case that we have already proved, to conclude that the ODE and the RDE solution coincide. But this is a contradiction to our assumption that they differ at time $\tau-\delta$. Therefore, explosion cannot happen and the lemma is true for $f\in\mathcal{V}^{N+\eps,N}(\R^d,\R^e)$.
\end{proof}

\begin{lemma}
The Log-ODE method $A^{\textup{Log-ODE}}$ is a local group-like admissible 1-step scheme.
\end{lemma}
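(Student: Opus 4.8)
The plan is to reduce every one of the five requirements to Lemma \ref{lem:LogODEIsRDE}, which recasts a single Log-ODE step as a genuine RDE step: for every $\bm{g}\in G^N(\R^d)$ and every $f\in\mathcal{V}^{N+,N}(\R^d,\R^e)$, $A_N^{\textup{Log-ODE}}(f,y_0,\bm{g})$ is the value at time $1$ of the unique solution of $\dd y=f(y)\sdd\bx^{\bm{g}}$, $y_0=y_0$, where $\bx^{\bm{g}}\in C^{N\var}([0,1],\R^d)$ is the geometric $N$-rough path of Lemma \ref{lem:XGNRoughPath}, controlled by $\omega^{\bm{g}}(s,t)=C\|\bm{g}\|^N(t-s)$ and satisfying $S_N(\bx^{\bm{g}})_{0,1}=\bm{g}$. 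Since that RDE has a unique solution on $[0,1]$ for any such $f$ (Theorem \ref{thm:RDEUniqueness}), $A_N^{\textup{Log-ODE}}$ is a well-defined total function on $\mathcal{V}^{N+,N}(\R^d,\R^e)\times\R^e\times G^N(\R^d)$, which is Condition 1 of an admissible 1-step scheme.

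For Condition 2 of admissibility, take $f\in L(\R^d,\lip^{N+\eps}(\R^e,\R^e))$, $\bx\in C^{p\var,0}$ with $p<N$ controlled by $\omega$, and set $\bm{g}:=S_N(\bx)_{0,T}$. I would interpolate through the single-step Euler scheme $A_N^{\textup{Euler}}$ of Appendix \ref{sec:Euler}: the term $\|y_T-A_N^{\textup{Euler}}(f,y_0,\bm{g})\|$ is $\le C\omega(0,T)^{(N+1)/p}$ by the standard single-step Euler estimate (\cite[Theorem 10.30]{friz2010multidimensional}), with $C$ increasing in $\|f\|_{\lip^N}$ and $\omega(0,T)$. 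Next, since $A_N^{\textup{Euler}}$ depends only on the truncated signature and $S_N(\bx^{\bm{g}})_{0,1}=\bm{g}$, the quantity $\|A_N^{\textup{Euler}}(f,y_0,\bm{g})-A_N^{\textup{Log-ODE}}(f,y_0,\bm{g})\|$ is exactly the single-step Euler error for the RDE driven by $\bx^{\bm{g}}\in C^{N\var}$, hence $\le C\,\omega^{\bm{g}}(0,1)^{(N+1)/N}=C\|\bm{g}\|^{N+1}$; bounding $\|\bm{g}\|=\|S_N(\bx)_{0,T}\|$ by a multiple of $\|\bx\|_{p\var;[0,T]}\le\omega(0,T)^{1/p}$ (absorbing leftover powers of $\omega(0,T)$ into the increasing constant) yields $\le C\omega(0,T)^{(N+1)/p}$. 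Summing the two contributions gives Condition 2.

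For locality, Condition 1 is a localization argument: $A_N^{\textup{Log-ODE}}(f,y,\bm{g})$ is the endpoint of the RDE driven by $\bx^{\bm{g}}$, so by the $N$ non-explosion condition for $f$ together with $\|\bx^{\bm{g}}\|_{N\var;[0,1]}\le C\|\bm{g}\|$ this solution stays in a ball $B_{r(\|y\|+\|\bm{g}\|)}$ for an increasing $r$ depending only on $f$; replacing $f$ by any $\widetilde f\in\mathcal{V}^{N+,N}$ agreeing with $f$ there leaves that solution (hence $A_N^{\textup{Log-ODE}}$) unchanged, by uniqueness (Theorem \ref{thm:RDEUniqueness}). \textbf{Condition 2 of locality (stability) is the main obstacle.} The plan is to concatenate the driving paths: given a partition $0=t_0<\cdots<t_n=T$, put $\bm{g}_k:=S_N(\bx)_{t_{k-1},t_k}$ and glue the $\bx^{\bm{g}_k}$ into one weakly geometric $N$-rough path $\bX\in C^{N\var}([0,n],\R^d)$ whose restriction to $[k-1,k]$ is a time-shift of $\bx^{\bm{g}_k}$ (this is well-defined since $G^N(\R^d)$ is a group). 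Applying Lemma \ref{lem:LogODEIsRDE} on each $[k-1,k]$ shows the iterates $y_k:=A_N^{\textup{Log-ODE}}(f,y_{k-1},\bm{g}_k)$ are precisely the values at integer times of the unique RDE solution $Y$ driven by $\bX$ with $Y_0=y_0$. Moreover $\bX$ is controlled by a control $\Omega$ on $[0,n]$ with $\Omega(0,n)=C\sum_k\|\bm{g}_k\|^N\le C\sum_k\|\bx\|_{p\var;[t_{k-1},t_k]}^N\le C\sum_k\omega(t_{k-1},t_k)^{N/p}\le C\,\omega(0,T)\bigl(\max_k\omega(t_{k-1},t_k)\bigr)^{N/p-1}$, using $N/p>1$ and superadditivity of $\omega$; for sufficiently fine partitions this is bounded (indeed small). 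The $N$ non-explosion condition for $f$ then gives $\sup_t\|Y_t\|\le M\bigl(\|y_0\|+\Omega(0,n)^{1/N}\bigr)\le C<\infty$ uniformly over such partitions, whence $\sup_k\|y_k\|\le C$.

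Finally, to see that $A^{\textup{Log-ODE}}$ is group-like, let $\bm{f}$ be the full vector field of $f\in\mathcal{V}^{N+,N}(\R^d,\R^e)$; by Lemma \ref{lem:NonExplosionExtendsToFullRDEs}, $\bm{f}\in\mathcal{V}^{N+,N}(\R^d,T_1^N(\R^e))$, so Lemma \ref{lem:LogODEIsRDE} identifies $A_N^{\textup{Log-ODE}}(\bm{f},\by,\bm{g})$, for $\by\in G^N(\R^e)$, with the value at time $1$ of the first-level RDE $\dd\bY=\bm{f}(\bY)\sdd\bx^{\bm{g}}$, $\bY_0=\by$. By Lemma \ref{lem:FullRDEAndFirstLevelRDEWithFullVF} this is a solution of the full RDE $\dd\bY=f(Y)\sdd\bx^{\bm{g}}$ started at $\by\in G^N(\R^e)$, and full RDE solutions are $G^N(\R^e)$-valued; hence $A_N^{\textup{Log-ODE}}(\bm{f},\by,\bm{g})\in G^N(\R^e)$. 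The remaining items are routine bookkeeping on top of Lemmas \ref{lem:XGNRoughPath} and \ref{lem:LogODEIsRDE}; the only genuinely nontrivial step is the stability argument by concatenation.
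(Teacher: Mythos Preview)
Your treatment of well-definedness, the error bound (Condition 2 of admissibility), group-likeness, and the first locality condition is correct. For the error bound you give a self-contained argument via the Euler scheme, whereas the paper simply cites \cite[Theorem~18]{boutaib2013dimension}; your route is a valid alternative.

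The gap is in your stability argument (Condition 2 of locality). The claim that the concatenated path $\bX$ on $[0,n]$ is controlled, in $N$-variation, by a control $\Omega$ with $\Omega(0,n)=C\sum_k\|\bm{g}_k\|^N$ is not justified and is in fact false. The obvious candidate---the additive patching of the individual controls $C\|\bm{g}_k\|^N(t-s)$---does \emph{not} dominate $\vertiii{\bX_{s,t}}^N$ once $[s,t]$ straddles several integer intervals: subadditivity of the Carnot--Carath\'eodory norm gives only $\vertiii{\bX_{s,t}}\le\sum(\text{piece norms})$, and raising to the $N$-th power produces a factor that grows with the number of pieces. More decisively, by Chen's relation $\bX_{0,n}=\bm{g}_1\otimes\cdots\otimes\bm{g}_n=S_N(\bx)_{0,T}$, so $\|\bX\|_{N\var;[0,n]}^N\ge\vertiii{S_N(\bx)_{0,T}}^N$, a fixed positive number independent of the partition; yet by your own computation $\sum_k\|\bm{g}_k\|^N\le C\,\omega(0,T)\bigl(\max_k\omega(t_{k-1},t_k)\bigr)^{N/p-1}\to0$ as the mesh shrinks. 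Hence no control with that total mass can dominate the $N$-variation of $\bX$, and the appeal to the $N$ non-explosion condition breaks down.

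It may well be true that $\|\bX\|_{N\var;[0,n]}$ is bounded uniformly over partitions (intuitively $\bX$ is a log-linear interpolant of $S_N(\bx)$ at the grid points), but this requires a genuinely different estimate. The paper sidesteps the whole issue: it first replaces $f$ by a globally $\lip^{N+}$ vector field $\widetilde f$ agreeing with $f$ on a large ball $B_A$, uses the already-established local error bound (with the bounded $\widetilde f$) to show the $\widetilde f$-iterates converge uniformly to the true solution and hence stay within $\|y\|_\infty+1$ for fine meshes, and finally invokes the first locality condition to choose $A$ large enough that the $\widetilde f$-iterates coincide with the $f$-iterates. This bootstrap avoids any global $N$-variation estimate for a concatenated driver.
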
 

\begin{proof}
First, we note that the map $$A_N^{\textup{Log-ODE}}\colon \mathcal{V}^{N+,N}(\R^d,\R^e)\times\R^e\times G^N(\R^d)\to \R^e$$ is well-defined by Lemma \ref{lem:LogODEIsRDE}.

Next, assume $p<N$, let $\bx\in C^{p\var,0}([0,T],\R^d)$ be controlled by $\omega$, and let $f\in L(\R^d,\lip^{N+}(\R^e,\R^e))$. Let $y$ be the unique solution to $$\dd y_t = f(y_t)\sdd\bx_t,\qquad y_0 = y_0.$$ Then, the error bound $$\|y_T - A_N^{\textup{Log-ODE}}(f,y_0,S_N(\bx)_{0,T})\| \le C\omega(0,T)^{\frac{N+1}{p}}$$ was proved, for example, in \cite[Theorem 18]{boutaib2013dimension} (together with \cite[Remark 16]{boutaib2013dimension}). This proves that $A^{\textup{Log-ODE}}$ is an admissible 1-step scheme.

The fact that $A^{\textup{Log-ODE}}$ is group-like follows from $A^{\textup{Log-ODE}}(\bm{f},\by_0,\bg)$ being the solution of a RDE (see Lemma \ref{lem:LogODEIsRDE}), together with Lemma \ref{lem:FullRDEAndFirstLevelRDEWithFullVF} and the fact that full RDE solutions take values in $G^N(\R^e)$.

It remains to prove that $A^{\textup{Log-ODE}}$ is local. To see that, we once again note that $A^{\textup{Log-ODE}}_N(f,y_0,\bg)$ really solves a RDE with respect to the path $\bx^{\bg}\in C^{N\var}([0,1],\R^d)$ by Lemma \ref{lem:LogODEIsRDE}. Also, by Lemma \ref{lem:XGNRoughPath}, there exists a constant $C = C(N)$ such that $$\|\bx^{\bg}\|_{N\var;[0,1]} \le C\|\bg\|^N.$$ Applying Theorem \ref{thm:RDEExistence} shows the first condition of locality.

For the second condition of locality, let $f\in\mathcal{V}^{N+,N}(\R^d,\R^e)$, let $\bx\in C^{p\var,0}([0,T],\R^d)$ where $p<N$, and let $y_0\in U$. Moreover, let $y$ be the unique solution to the RDE $$dy_t = f(y_t) d\bx_t,\qquad y_0 = y_0.$$ By Theorem \ref{thm:RDEExistence}, $y$ stays uniformly bounded by some constant $C$. Hence, define $\widetilde{f}\in L(\R^d,\lip^N(\R^e,\R^e))$ to agree with $f$ on $B_A$, as in Theorem \ref{thm:WhitneysTheoremStein}, where $A> C + 1$ is a constant that will be determined later, and that will not depend on the partition. Then, $y$ is still the unique solution to the RDE with vector field $\widetilde{f}$.

For every partition $\mathcal{P}$ of $[0,T]$, define $\overline{y}^{\mathcal{P}}$ by applying $A^{\textup{Log-ODE}}_N$ to the above RDE on the partition $\mathcal{P}$, but with $f$ replaced by $\widetilde{f}$, and interpolate linearly between the partition points. Then, by an argument similar to Lemma \ref{lem:LocalAdmissibleAlgorithmAppliedToPartition}, we see that the approximation $(\overline{y}^{\mathcal{P}})$ converges uniformly to $y$ as $\abs{\mathcal{P}}\to 0$. In particular, (by uniform continuity of $\omega$) we can find $\delta > 0$ such that for all $\mathcal{P}$ with $\abs{\mathcal{P}} < \delta$, we have $\|y-\overline{y}^{\mathcal{P}}\|_\infty \le 1,$ which implies $$\sup_{\mathcal{P}\colon \abs{\mathcal{P}} < \delta} \sup_{t\in\mathcal{P}} \|\overline{y}^{\mathcal{P}}_t\| \le \|y\|_\infty + 1 \le C + 1.$$

The only thing that remains to show is that we would have gotten the same approximation $\overline{y}^{\mathcal{P}}$ had we applied $A^{\textup{Log-ODE}}$ with the original vector field $f$. To see that, note that for all intervals $[s,t]\subseteq[0,T]$, we have by \cite[Theorem 9.5]{friz2010multidimensional} $$\vertiii{S_N(\bx)_{s,t}} \le \|S_N(\bx)\|_{p\var;[s,t]} \le C\|\bx\|_{p\var;[s,t]} \le C\omega(0,T)^{1/p}.$$ Additionally using that we have already proved that $\|\overline{y}^{\mathcal{P}}_t\| \le C+1$, for all $\mathcal{P}$ fine enough and all $t\in\mathcal{P}$, we may apply the first condition of locality, which we have already proved, to conclude that there exists a constant $D<\infty$ independent of the partition $\mathcal{P}$ and independent of $\widetilde{f}$, such that $A^{\textup{Log-ODE}}_N(f,y,S_N(\bx)_{s,t})$ does not depend on $f$ outside $D$. Choosing $A = D\lor(C+2)$ finishes the proof that $A^{\textup{Log-ODE}}$ is local.
\end{proof}

\section{Numerical examples}\label{sec:Numerics}

We now apply the adaptive algorithm using the error representation formula to a number of examples. These examples were chosen from a wide range of different scenarios where adaptive algorithms may prove useful. They include an example with a path that has a spike, an example with a vector field that has a spike, an example with a path that is rough on some intervals and smooth on others, and an ergodic example. In all these examples, we see that our algorithm for predicting the global error yields an accurate approximation of the true error (which was estimated by computing the solution on a finer grid). We remark that we compute the local errors $e(t_k)$ by subdividing every time interval into 8 subintervals and using as the actual local solution the result of the Log-ODE method applied on these 8 smaller intervals.

Throughout this section, we compare four different algorithms.
\begin{enumerate}
\item ``ER predicting'' computes the error representation formula and uses the cost model of Appendix \ref{sec:CostModel} to determine whether to refine intervals or increase degrees.
\item ``ER testing'' computes the error representation formula. To decide whether to refine intervals or increase degrees, it always tries both options for every interval in question, and then decides based on which option worked better for this particular interval. In a certain heuristic sense, ``ER testing'' always makes the best possible decision on whether it refines the interval or increases the degree, but this comes at the additional cost of having to do extra computations to determine which option is better. We hope to illustrate that ``ER predicting'' uses a similar number of intervals with similar degrees as ``ER testing'', while being faster.
\item ``Simple first level'' does not compute the error representation formula. Rather, it estimates the error by comparing the result to the result using a coarser time grid. If the desired accuracy has not been reached, it uniformly refines all intervals. In particular, it never increases the degrees. This is perhaps the simplest ``reasonable'' algorithm for reaching a certain error tolerance, and we hope to show that ``ER predicting'' can achieve the same error tolerance with significantly fewer intervals.
\item ``Simple full solution'' is essentially the same as ``Simple first level'', except that it computes the full rough path $\bz = (\bx, \by)$. This is because for computing the error representation formula, one already needs to compute $\bz$. Hence, if the full solution is needed, the algorithm ``Simple full solution'' yields a better benchmark for comparing the computational cost than ``Simple first level''.
\end{enumerate}

\subsection{Singularity in the path}

Consider the RDE $$\dd y_t = f(y_t)\sdd\bx_t,\qquad y_0 = 
\begin{pmatrix}
0\\
0
\end{pmatrix},
$$ where $\bx$ is the canonical rough path associated to the finite variation path $x:[0,1]\to \R^2$, $$x_t = \left(\frac{1}{5000(t-0.5)^2 + 1},\ t\right)^T,$$ and where the vector field $f$ is given by $$f(y) = f(y_1, y_2) = 
\begin{pmatrix}
y_2-y_1 & -y_2\\
\tanh(-y_2) & \cos(2y_2-y_1)
\end{pmatrix}.
$$ We use an absolute and a relative error tolerance of $10^{-4}$.

The solution $y$ is shown in Figure \ref{fig:SingularityInThePathSolution}, and the lengths and degrees of the intervals chosen by the adaptive algorithms is in Figure \ref{fig:SingularityInThePathIntervals}. We see that the time discretization is particularly fine around $t=0.5$, just as expected.

Finally, in Table \ref{tab:SingularityInThePathData}, we can find a comparison of the different algorithms. We see that the error representation formula yielded incredibly accurate results. Despite needing significantly more intervals, the simple algorithms were faster at achieving the required error tolerance than the algorithms using the error representation formula. This is of course because computing the error representation formula can be costly. However, if one is only interested in the final point $y_1$, then one can use the error representation formula to achieve a significantly higher accuracy. Indeed, the error representation formula estimates $\textup{err} \approx y_1 -\overline{y}_1$, so that $\overline{y}_1 + \textup{err}$ usually is a better approximation of $y_1$ than $\overline{y}_1$ itself. For comparison, the algorithm ``Simple first level'' takes 524288 intervals and 1282 seconds to achieve an accuracy of $2\cdot 10^{-9}$, while the algorithm ``Simple full solution'' takes 524288 intervals and 1919 seconds.

\begin{figure}
\centering
\includegraphics[scale=0.6]{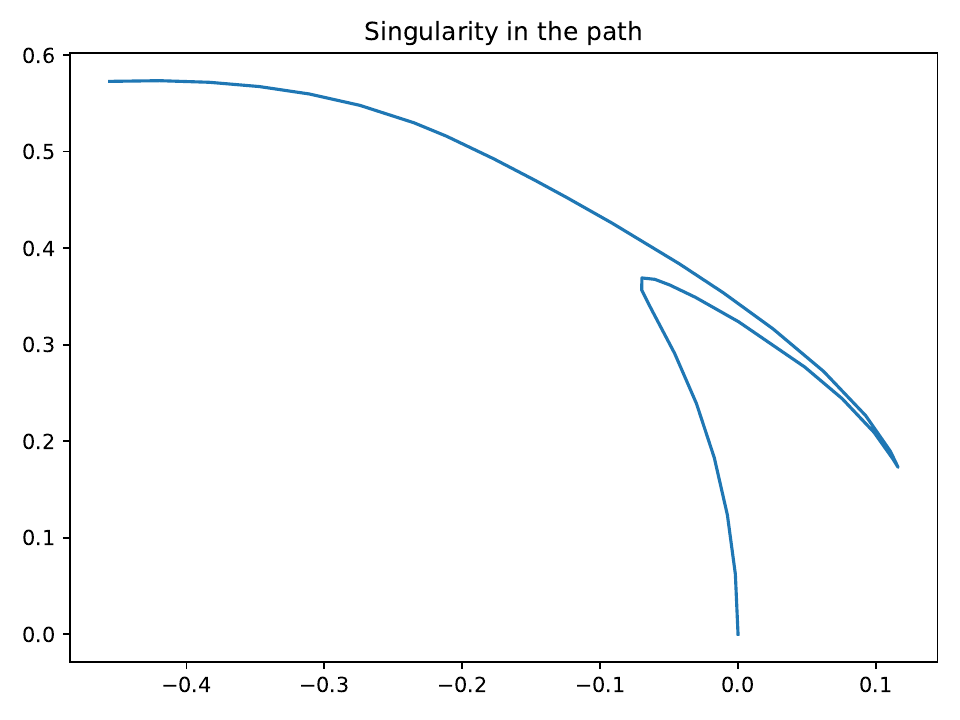}
\caption{The solution path of the example ``Singularity in the path''.}
\label{fig:SingularityInThePathSolution}
\end{figure}

\begin{figure}
\centering
\begin{minipage}{.5\textwidth}
  \centering
  \includegraphics[width=\linewidth]{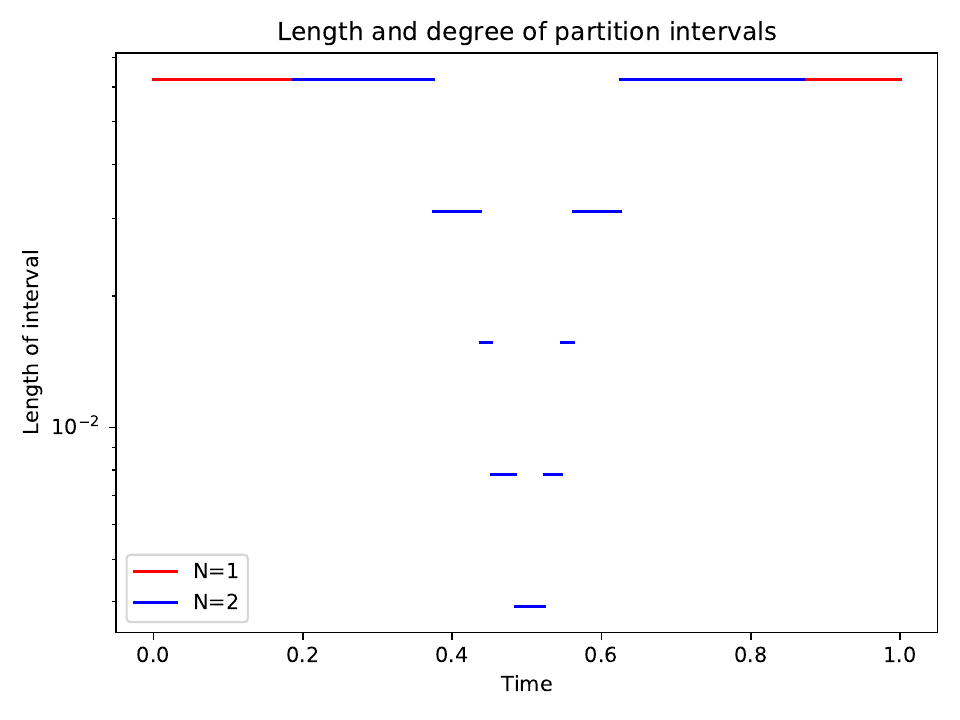}
\end{minipage}%
\begin{minipage}{.5\textwidth}
  \centering
  \includegraphics[width=\linewidth]{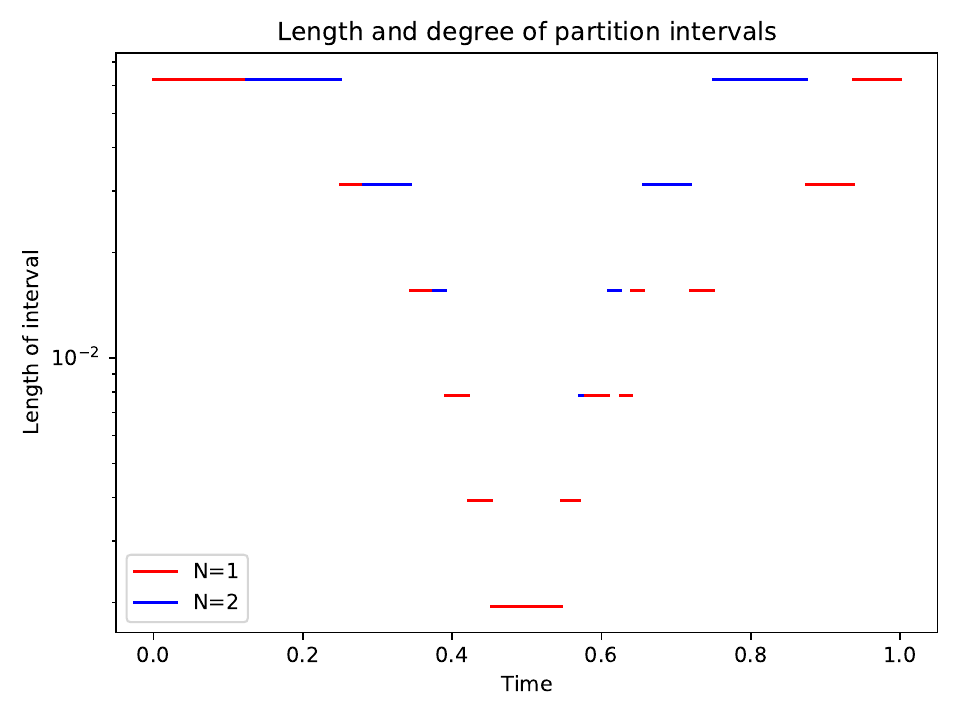}
\end{minipage}
\caption{The lengths of the partition intervals together with the degrees of the Log-ODE method for the example ``Singularity in the path''. The left plot corresponds to ``ER predicting'', the right plot to ``ER testing''.}
\label{fig:SingularityInThePathIntervals}
\end{figure}

\begin{table}[!htbp]
\centering
\resizebox{\textwidth}{!}{\begin{tabular}{c|c|c|c|c}
 & ER predicting & ER testing & Simple first level & Simple full solution\\ \hline
Error & $8.81\cdot 10^{-6}$ & $9.88\cdot 10^{-5}$ & $2.75\cdot 10^{-5}$ & $2.75\cdot 10^{-5}$\\
Estimated error & $8.81\cdot 10^{-6}$ & $9.88\cdot 10^{-5}$ & - & -\\
Error after correction & $1.28\cdot 10^{-9}$ & $7.02\cdot 10^{-9}$ & - & -\\
Degree 1 intervals & 5 & 83 & 1024 & 1024\\
Degree 2 intervals & 30 & 11 & 0 & 0\\
Runtime (s) & 46.14 & 77.40 & 1.385 & 1.934
\end{tabular}}
\caption{Errors, intervals, and runtime of the various algorithms for the example ``Singularity in the path''.}
\label{tab:SingularityInThePathData}
\end{table}

\subsection{Singularity in the vector field}

Consider the RDE $$\dd y_t = f(y_t)\sdd\bx_t,\qquad y_0 = 
\begin{pmatrix}
0\\
0
\end{pmatrix},
$$ where $\bx$ is the canonical rough path associated to the finite variation path $x:[0,1]\to \R^2$, $$x_t = \frac{1}{2}
\begin{pmatrix}
\sin(8\pi t)\\
\cos(8\pi t)
\end{pmatrix},$$ and where the vector field $f$ is given by $$f(y) = f(y_1, y_2) = 
\begin{pmatrix}
y_2-y_1 & -y_2\\
1 + \frac{20}{1000(y_1+1)^2+1} & \frac{20}{1000(y_2+1)^2+1}
\end{pmatrix}.
$$ We use an absolute and a relative error tolerance of $10^{-4}$. 

The solution $y$ is shown in Figure \ref{fig:SingularityInTheVectorFieldSolution}, and the lengths and degrees of the intervals chosen by the adaptive algorithms is in Figure \ref{fig:SingularityInTheVectorFieldIntervals}. From Figure \ref{fig:SingularityInTheVectorFieldSolution} we can read off that the singularities in the vector field are hit roughly at the times $$0.094, 0.188, 0.322, 0.414, 0.524.$$ This perfectly coincides with the locations of the finest time scales in Figure \ref{fig:SingularityInTheVectorFieldIntervals}. 

Finally, in Table \ref{tab:SingularityInTheVectorFieldData}, we can find a comparison of the different algorithms. We see that the error representation formula yielded very accurate results. Once again, despite using significantly fewer intervals, the simple algorithms were faster since they do not have to compute the error representation formula. If we use the error representation formula to correct the approximated solution, we obtain an error of roughly $2\cdot 10^{-6}$. For comparison, ``Simple first level'' needs 65536 intervals and $177.6$ seconds to achieve that error tolerance, while ``Simple full solution'' needs 65536 intervals and $263.4$ seconds.

\begin{figure}
\centering
\begin{minipage}{.5\textwidth}
	\centering
	\includegraphics[width=\linewidth]{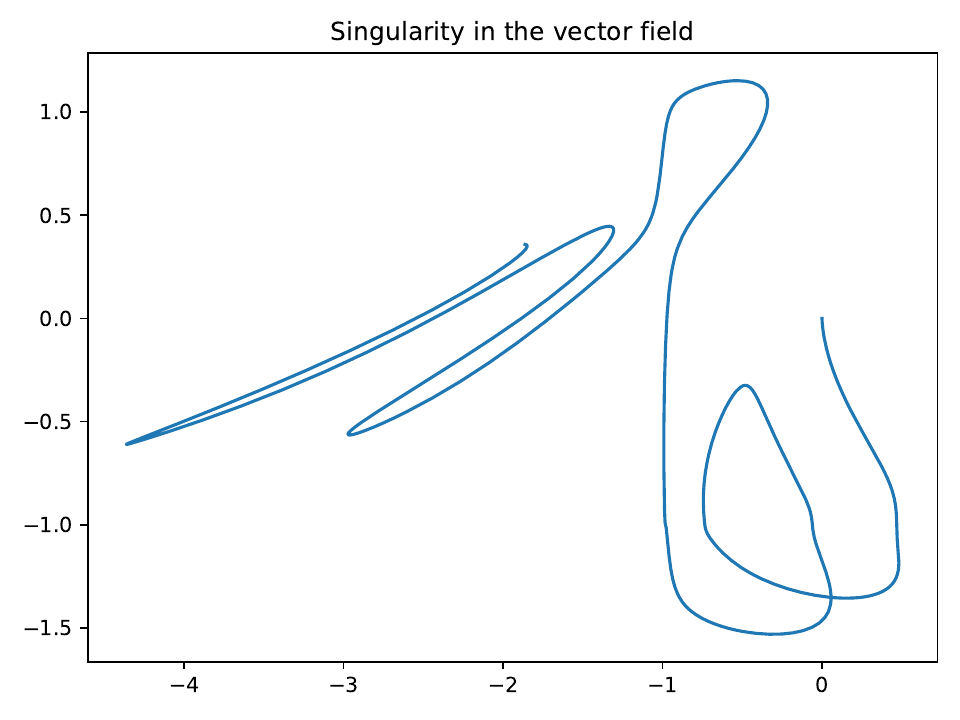}
\end{minipage}%
\begin{minipage}{.5\textwidth}
	\centering
	\includegraphics[width=\linewidth]{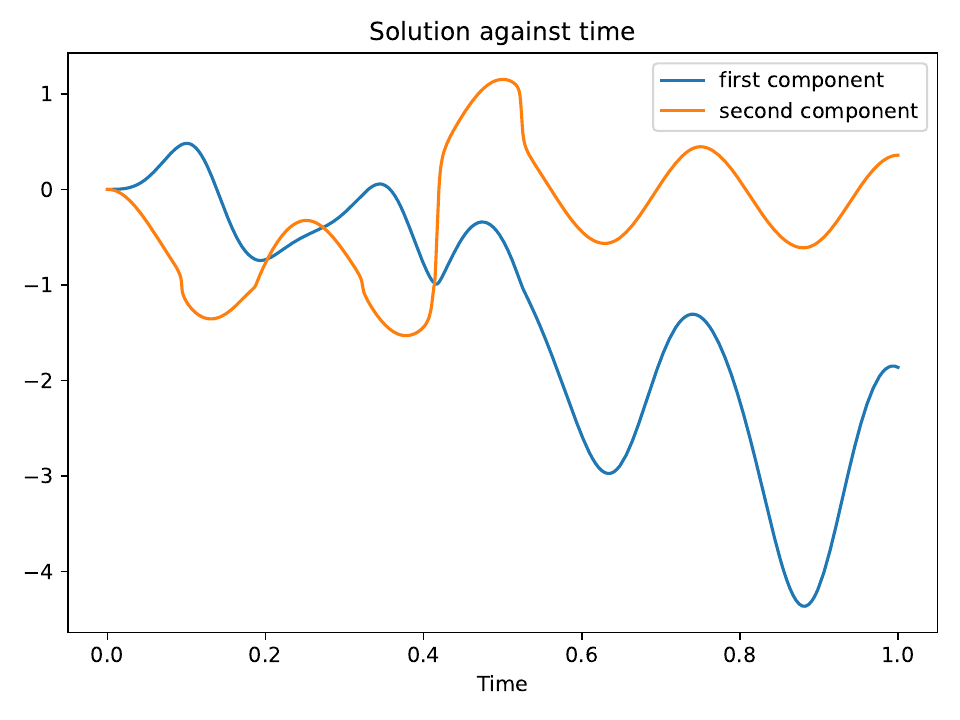}
\end{minipage}
\caption{The solution path of the example ``Singularity in the vector field''. The left plot is the solution path, and the right plot are the two components plotted against time.}
\label{fig:SingularityInTheVectorFieldSolution}
\end{figure}

\begin{figure}
\centering
\begin{minipage}{.5\textwidth}
  \centering
  \includegraphics[width=\linewidth]{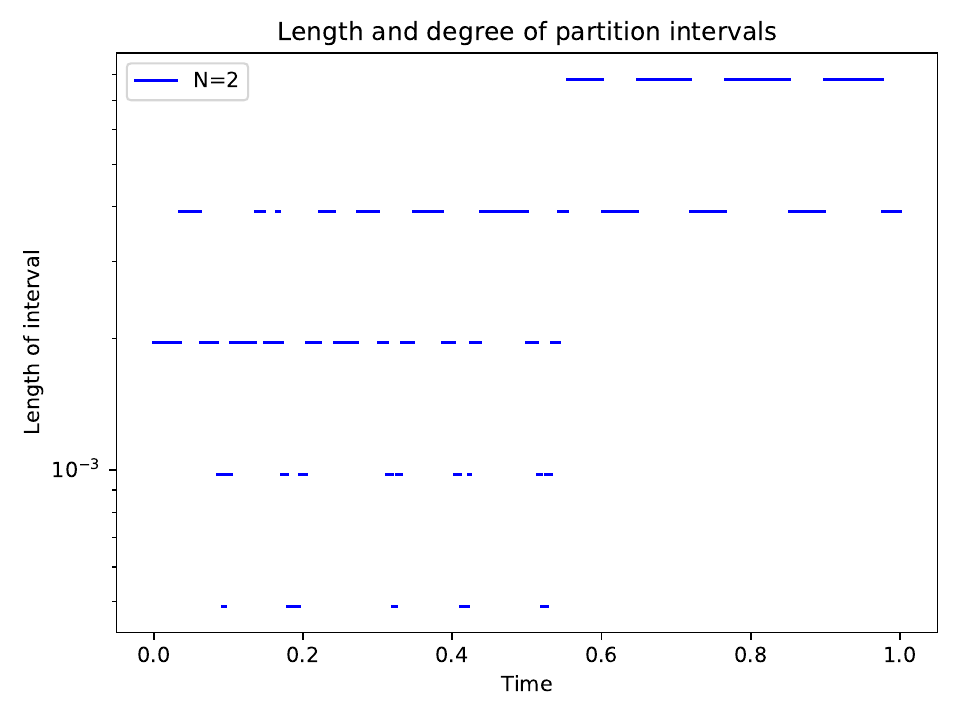}
\end{minipage}%
\begin{minipage}{.5\textwidth}
  \centering
  \includegraphics[width=\linewidth]{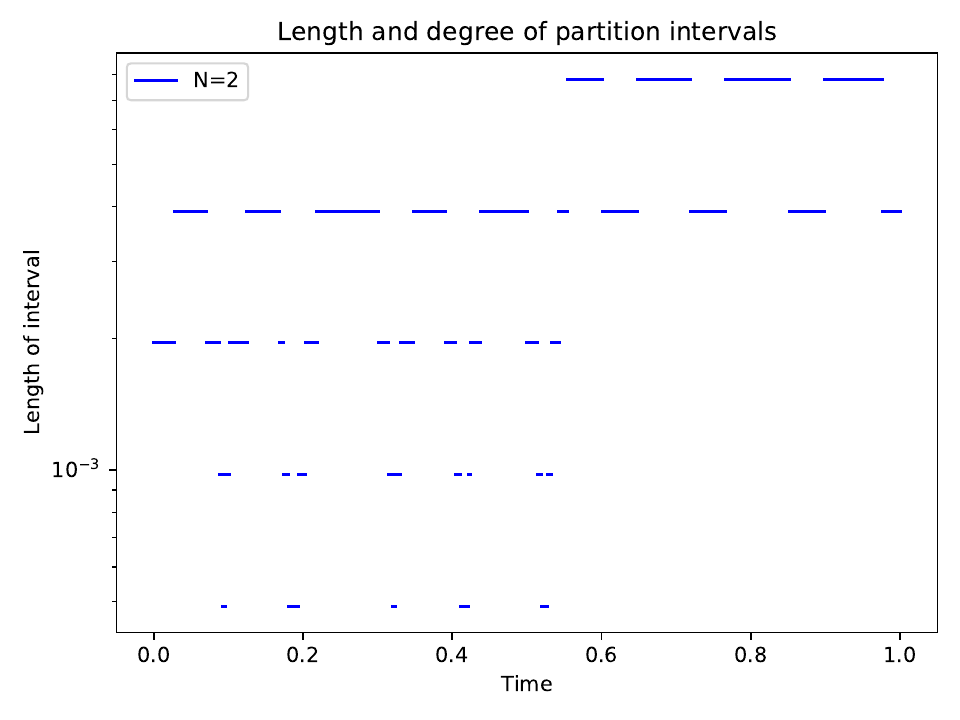}
\end{minipage}
\caption{The lengths of the partition intervals together with the degrees of the Log-ODE method for the example ``Singularity in the vector field''. The left plot corresponds to ``ER predicting'', the right plot to ``ER testing''.}
\label{fig:SingularityInTheVectorFieldIntervals}
\end{figure}

\begin{table}[!htbp]
\centering
\resizebox{\textwidth}{!}{\begin{tabular}{c|c|c|c|c}
 & ER predicting & ER testing & Simple first level & Simple full solution\\ \hline
Error & $3.47\cdot 10^{-5}$ & $3.08\cdot 10^{-5}$ & $3.52\cdot 10^{-5}$ & $3.52\cdot 10^{-5}$\\
Estimated error & $3.34\cdot 10^{-5}$ & $2.84\cdot 10^{-5}$ & - & -\\
Error after correction & $1.25\cdot 10^{-6}$ & $2.38\cdot 10^{-6}$ & - & -\\
Degree 1 intervals & 0 & 0 & 8192 & 8192\\
Degree 2 intervals & 411 & 380 & 0 & 0\\
Runtime (s) & 42.33 & 149.6 & 10.36 & 15.80
\end{tabular}}
\caption{Errors, intervals, and runtime of the various algorithms for the example ``Singularity in the vector field''.}
\label{tab:SingularityInTheVectorFieldData}
\end{table}

\subsection{Path changing roughness}

Consider the RDE $$\dd y_t = f(y_t)\sdd\bx_t,\qquad y_0 = 
\begin{pmatrix}
0\\
0
\end{pmatrix},
$$ where $\bx$ is the canonical rough path associated to the finite variation path $$x_t = \frac{1}{2}
\begin{pmatrix}
\sin(8\pi t)\\
\cos(8\pi t)
\end{pmatrix}$$ on the time intervals $[0, 1/4]$ and $[3/4, 1]$, and a fractional Brownian motion (fBm) with Hurst parameter $H=0.4$ on $[1/4, 3/4]$ (of course glued together correctly). In fact, we did not use an exact fBm, but rather a piecewise linear interpolation, using $1048576$ time intervals on $[0, 1]$. The vector field $f$ is given by $$f(y) = f(y_1, y_2) = 
\begin{pmatrix}
y_2-y_1 & -y_2\\
\tanh(-y_2) & \cos(-y_1 + 2y_2)
\end{pmatrix}.
$$ We use an absolute and a relative error tolerance of $5\cdot 10^{-4}$.

The solution $y$ is shown in Figure \ref{fig:PathChangingRoughnessSolution}, and the lengths and degrees of the intervals chosen by the adaptive algorithms is in Figure \ref{fig:PathChangingRoughnessIntervals}. We see that much finer time scales are used in the interval $[1/4, 3/4]$ corresponding to the fractional Brownian motion.

Finally, in Table \ref{tab:PathChangingRoughnessData}, we can find a comparison of the different algorithms. We see that the error representation formula yielded very accurate results, despite the roughness of the driving path. Moreover, the algorithms using the error representation formula need far fewer intervals, and also tend to be faster. Of course, in this example this is not so much due to the refinement of the intervals, but due to using the degree 3 Log-ODE method over the degree 2 method. Indeed, the interval $[1/4,3/4]$ contributes most to the error, and the fBm on that interval is about equally rough everywhere. Thus, and since $[1/4, 3/4]$ already covers half of the total interval $[0, 1]$, merely choosing optimal intervals will only yield very small improvements.

\begin{figure}
\centering
\includegraphics[scale=0.6]{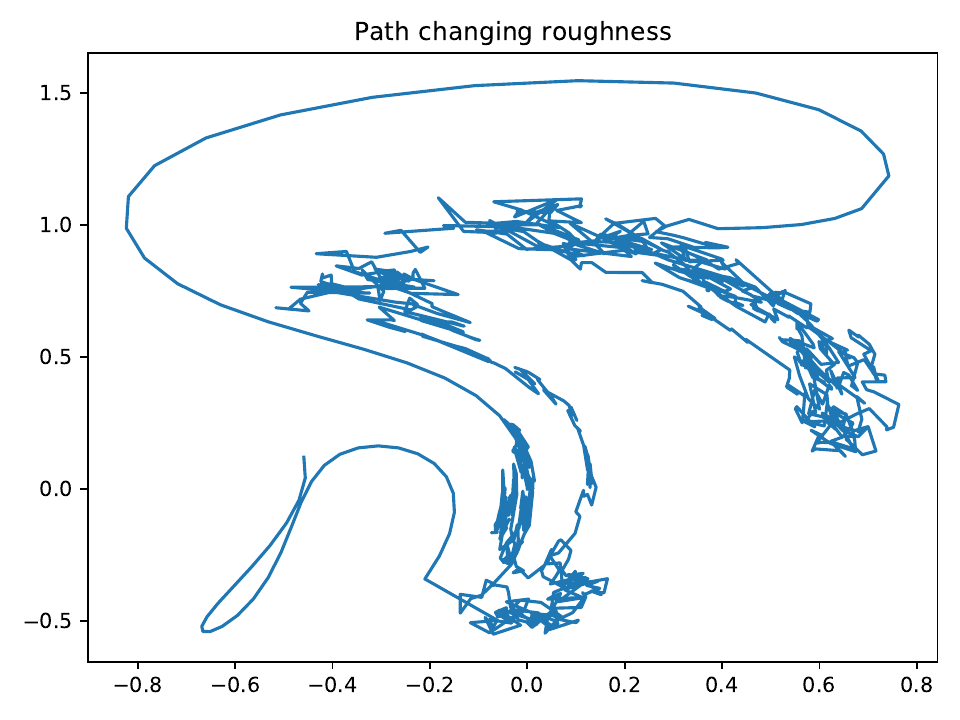}
\caption{The solution path of the example ``Path changing roughness''.}
\label{fig:PathChangingRoughnessSolution}
\end{figure}

\begin{figure}
\centering
\begin{minipage}{.5\textwidth}
  \centering
  \includegraphics[width=\linewidth]{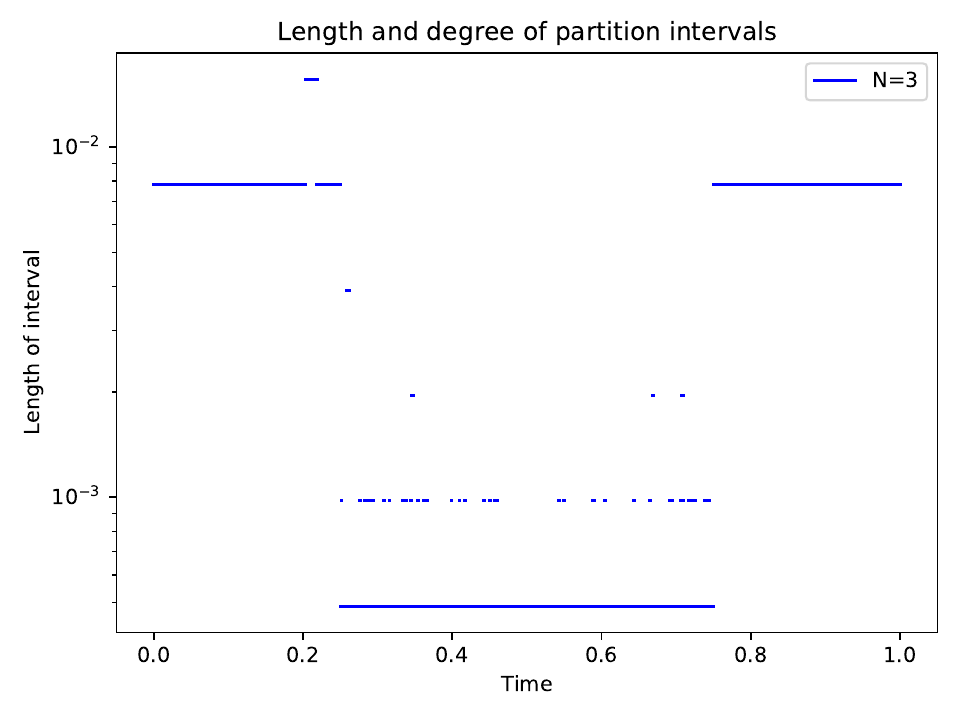}
\end{minipage}%
\begin{minipage}{.5\textwidth}
  \centering
  \includegraphics[width=\linewidth]{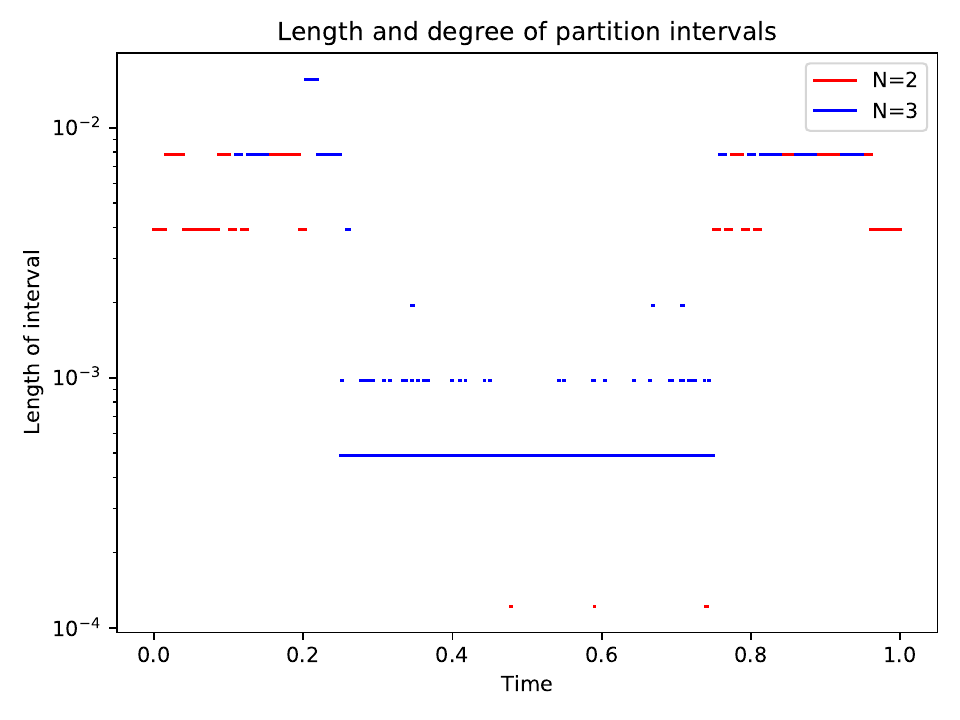}
\end{minipage}
\caption{The lengths of the partition intervals together with the degrees of the Log-ODE method for the example ``Path changing roughness''. The left plot corresponds to ``ER predicting'', the right plot to ``ER testing''.}
\label{fig:PathChangingRoughnessIntervals}
\end{figure}

\begin{table}[!htbp]
\centering
\resizebox{\textwidth}{!}{\begin{tabular}{c|c|c|c|c}
 & ER predicting & ER testing & Simple first level & Simple full solution\\ \hline
Error & $1.51\cdot 10^{-4}$ & $2.20\cdot 10^{-4}$ & $1.48\cdot 10^{-4}$ & $1.48\cdot 10^{-4}$\\
Estimated error & $1.40\cdot 10^{-4}$ & $2.08\cdot 10^{-4}$ & - & -\\
Error after correction & $1.17\cdot 10^{-5}$ & $1.25\cdot 10^{-5}$ & - & -\\
Degree 2 intervals & 0 & 87 & 131072 & 131072\\
Degree 3 intervals & 1027 & 987 & 0 & 0\\
Runtime (s) & 1022 & 2441 & 1926 & 5795
\end{tabular}}
\caption{Errors, intervals, and runtime of the various algorithms for the example ``Path changing roughness''.}
\label{tab:PathChangingRoughnessData}
\end{table}

\subsection{Underdamped Langevin equation}

As a final example, we consider the Langevin equation 
\begin{align*}
\dd Q_t &= P_t \sdd t,\\
\dd P_t &= -\nabla U(Q_t)\sdd t - \nu P_t \sdd t + \sqrt{\frac{2\nu}{\beta}}\sdd W_t.
\end{align*}

Motivated by \cite[Section 5.3]{foster2020numerical}, we we choose $U(q) = (q^2-1)^2$, $\nu=1$, $\beta=3$, $Q_0=0$, $P_0=0$, and the time horizon $T=1000$. Furthermore, suppose we are mainly interested in the position of the particle $Q$ at the final point. We hence choose the payoff function $g(q, p) = q$, and the absolute error tolerance $10^{-6}$. The driving path $(t, W_t)$ is given as a discretized time-enhanced Brownian motion where we use $1048576$ time steps.

The solution $Q$ is shown in Figure \ref{fig:UnderdampedLangevinEquationSolution}, and the lengths and degrees of the intervals chosen by the adaptive algorithms is in Figure \ref{fig:UnderdampedLangevinEquationIntervals}. We see that the algorithm only refines towards the end of the interval $[0, T]$. This is explained by the ergodic nature of the dynamical system, and since we are merely interested in the position of $Q$ at the final time $T$. 

Finally, in Table \ref{tab:UnderdampedLangevinEquationData}, we can find a comparison of the different algorithms. We see that the error representation formula yielded very accurate results. Furthermore, the algorithms using the error representation formula used substantially fewer intervals, and were faster.

\begin{figure}
\centering
\includegraphics[scale=0.6]{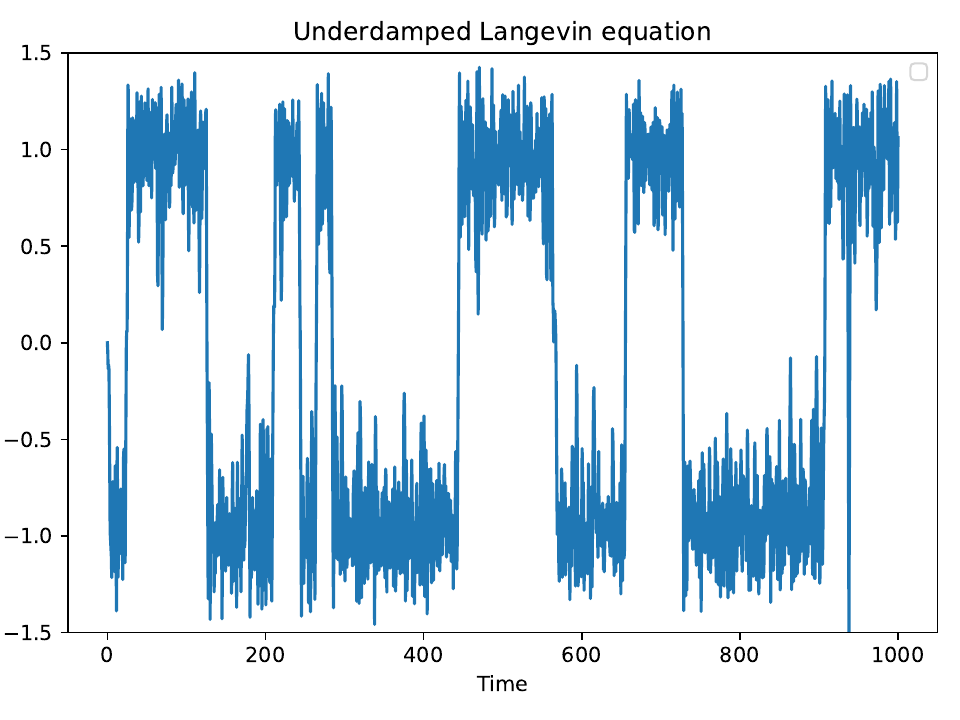}
\caption{The solution path of the example ``Underdamped Langevin equation''.}
\label{fig:UnderdampedLangevinEquationSolution}
\end{figure}

\begin{figure}
\centering
\begin{minipage}{.5\textwidth}
  \centering
  \includegraphics[width=\linewidth]{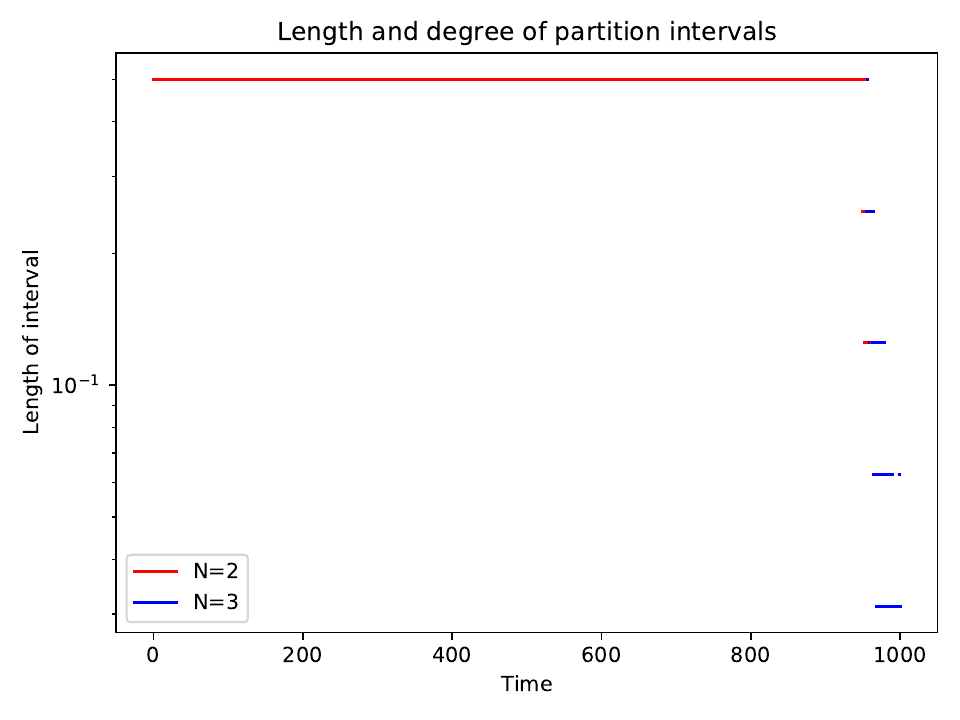}
\end{minipage}%
\begin{minipage}{.5\textwidth}
  \centering
  \includegraphics[width=\linewidth]{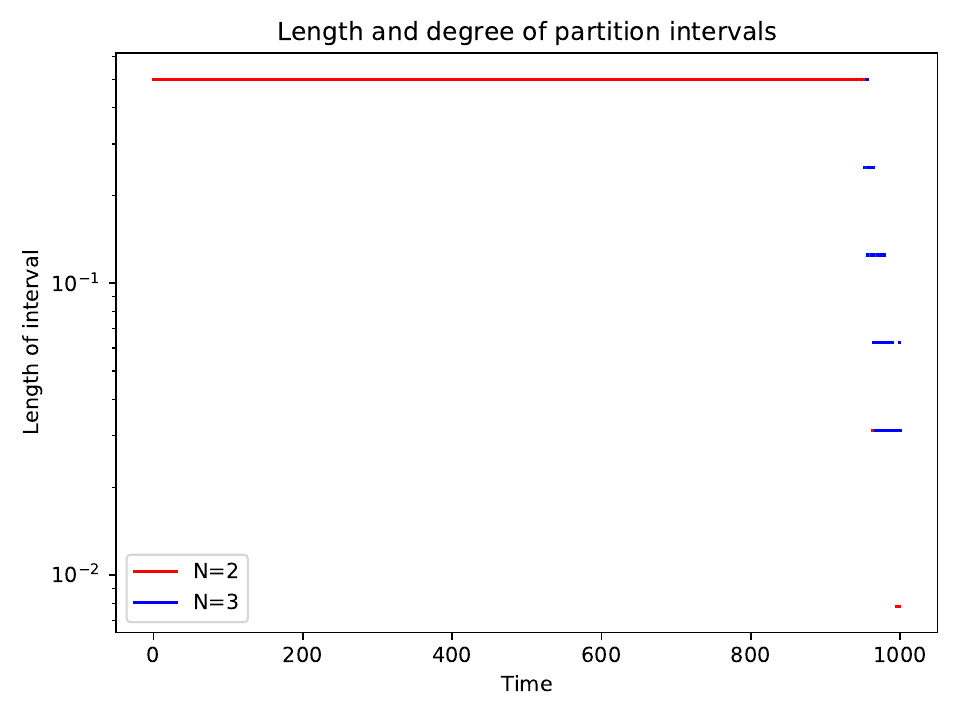}
\end{minipage}
\caption{The lengths of the partition intervals together with the degrees of the Log-ODE method for the example ``Underdamped Langevin equation''. The left plot corresponds to ``ER predicting'', the right plot to ``ER testing''.}
\label{fig:UnderdampedLangevinEquationIntervals}
\end{figure}

\begin{table}[!htbp]
\centering
\resizebox{\textwidth}{!}{\begin{tabular}{c|c|c|c|c}
 & ER predicting & ER testing & Simple first level & Simple full solution\\ \hline
Error & $9.25\cdot 10^{-7}$ & $1.37\cdot 10^{-7}$ & $1.16\cdot 10^{-6}$ & $1.16\cdot 10^{-6}$\\
Estimated error & $9.27\cdot 10^{-7}$ & $1.39\cdot 10^{-7}$ & - & -\\
Error after correction & $1.78\cdot 10^{-9}$ & $1.18\cdot 10^{-9}$ & - & -\\
Degree 2 intervals & 1932 & 1926 & 256000 & 256000\\
Degree 3 intervals & 903 & 914 & 0 & 0\\
Runtime & 1110 & 3086 & 3289 & 9571
\end{tabular}}
\caption{Errors, intervals, and runtime of the various algorithms for the example ``Underdamped Langevin equation''.}
\label{tab:UnderdampedLangevinEquationData}
\end{table}

\appendix

\section{Notation and Definitions}\label{sec:Notation}

\subsection{General notation}

Given a real number $x$, we denote by $[x]$ the largest integer smaller than or equal to $x$, and by $\lfloor x\rfloor$ the largest integer strictly smaller than $x$. For example, $[2.6] = \lfloor 2.6\rfloor = 2$, but $[2] = 2$ while $\lfloor 2\rfloor = 1$. Furthermore, $\{x\}\coloneqq x - \lfloor x\rfloor\in(0,1].$

For a normed space $U$, and a real number $r\ge 0$, we denote by $B_r$ the closed ball centered at $0$ with radius $r$, i.e. $$B_r\coloneqq \left\{x\in U\colon \|x\|\le r\right\}.$$

For two topological vector spaces $U, V$, we denote by $L(U, V)$ the set of continuous linear maps from $U$ to $V$.

For a metric space $U$ and a Banach space $V$, we denote by $C_b(U,V)$ the Banach space of continuous bounded functions, together with the norm $$\|f\|_\infty \coloneqq \sup_{x\in U}\|f(x)\|,\qquad f\in C_b(U,V).$$ For two finite-dimensional Banach spaces $U,V$, we denote by $C(U,V)$ the set of continuous maps from $U$ to $V$. The elements in $C(U,V)$ are not assumed to be bounded. Hence, $C(U,V)$ is merely a locally convex space, when equipped with the seminorms $$\|f\|_K\coloneqq \|f|_K\|_\infty,\qquad f\in C(U,V)$$ for all compact sets $K\subset U$.

\subsection{Tensor algebras}

For a vector space $V$, we denote by $T(V)$ the tensor algebra $$T(V)\coloneqq \R \oplus V \oplus V^{\otimes 2}\oplus \dots,$$ and by $T^N(V)$ the truncated tensor algebra $$T^N(V)\coloneqq \bigoplus_{n=0}^N V^{\otimes n}.$$ We denote by $\pi_n:T(V)\to V^{\otimes n}$ the projection onto the $n$-th element, and for $m\le n$ we denote by $\pi_{m,n}:T(V)\to \bigoplus_{k=m}^n V^{\otimes k}$ the projection onto the levels $m$ to $n$. Elements in $T(V)$ or $T^N(V)$ are denoted by bold letters, while their projections onto the first component are denoted by the associated normal letters. For example, if $\bm{v}\in T^N(V)$, then we write $v = \pi_1(\bm{v})\in V.$ We may also write $v_k \coloneqq \pi_k(\bm{v})$ if no confusion arises.

We define the tensor product $\otimes:T(V)\times T(V)\to T(V)$ by $$\pi_n(\bm{a}\otimes\bm{b}) = \sum_{k=0}^n \pi_k(\bm{a})\otimes \pi_{n-k}(\bm{b}),\qquad \bm{a},\bm{b}\in T(V).$$ With component-wise addition and tensor multiplication, $T(V)$ (and, by projection, also $T^N(V)$) become algebras. If $\pi_0(\bm{a}) \neq 0$, then $\bm{a}\in T(V)$ has an inverse $\bm{a}^{-1}$ given by $$\bm{a}^{-1} = \frac{1}{\pi_0(\bm{a})} \sum_{n=0}^\infty \left(\bm{1} - \frac{\bm{a}}{\pi_0(\bm{a})}\right)^n.$$ Here, $\bm{1} = (1,0,\dots,0)\in G^N(V)$ denotes the unit element with respect to the tensor product.

We further define the set $$T_1^N(V) \coloneqq \left\{\bm{v}\in T^N(V)\colon \pi_0(\bm{v}) = 1\right\}.$$ The set $T_1^N(V)$ is an affine vector subspace of $T^N(V)$ and will be seen as a vector space in its own right. Addition and scalar multiplication in $T^N_1(V)$ are of course defined as in $T^N(V)$, except that the $0$th component is left invariant. We denote by $G(V)$ the free Lie group over $V$, and by $G^N(V)$ the free Lie group truncated at level $N$. In particular, we have $$G^N(V)\subseteq T_1^N(V)\subseteq T^N(V).$$

If $\bz\in T(\R^{d+e})$, $\bx\in T(\R^d)$ and $\by\in T(\R^e)$ are such that the projection of $\bz$ onto the first $d$ components (i.e. onto $T(\R^d)$) is $\bx$, and the projection of $\bz$ onto the last $e$ components is $\by$, then we write $\bz = (\bx,\by)$. Similar notations apply for $T^N$, $T_1^N$ and $G^N$. Note that $\bx$ and $\by$ do not usually determine $\bz$.

Next, given a norm on the vector space $V$, we want to extend this norm to its tensor powers $V^{\otimes n}$.

For all $n\ge 1$, we denote by $S_n$ the group of permutations of $\{1,\dots,n\}$. 

Given a vector space $V$, an integer $n\ge 1$, and an element $v = v_1\otimes\dots\otimes v_n\in V^{\otimes n}$, we define $$\sigma v \coloneqq v_{\sigma(1)}\otimes \dots\otimes v_{\sigma(n)}$$ for all $\sigma\in S_n$. We extend this definition to arbitrary $v\in V^{\otimes n}$ by linearity.

\begin{definition}\cite[Definition 1.25]{lyons2007differential}
Let $V$ be a normed space. We say that its tensor powers are endowed with admissible norms if the following conditions hold.
\begin{enumerate}
\item For each $n\ge 1$, the symmetric group $S_n$ acts by isometries on $V^{\otimes n}$, i.e. $$\|\sigma v\| = \|v\|,\qquad \forall v\in V^{\otimes n},\ \forall \sigma\in S_n.$$
\item The tensor product has norm $1$, i.e. for all $n,m\ge 1$, $$\|v\otimes w\| \le \|v\|\|w\|,\qquad \forall v\in V^{\otimes n},\ w\in V^{\otimes m}.$$
\end{enumerate}
\end{definition}

From now on we assume that the tensor powers will always be endowed with admissible norms. We can then define two different norms on $T_1^N(V)$.

\begin{definition}
Let $V$ be a normed space. Then, we define the inhomogeneous norm $\|.\|$ on $T_1^N(V)$ by $$\|\bm{v}\| \coloneqq \max_{i=1,\dots,N} \|\pi_i(\bm{v})\|,\qquad \forall \bm{v}\in T_1^N(V).$$ Moreover, we define the homogeneous norm $\vertiii{.}$ on $T_1^N(V)$ by $$\vertiii{\bm{v}} \coloneqq \max_{i=1,\dots,N} \|\pi_i(\bm{v})\|^{1/i},\qquad \forall \bm{v}\in T_1^N(V).$$
\end{definition}

We remark that we will cite some theorems in \cite{friz2010multidimensional} where different definitions of the homogeneous and inhomogeneous norms may be used. However, if $V$ is finite-dimensional, all homogeneous and all inhomogeneous norms are equivalent (see also \cite[Theorem 7.44]{friz2010multidimensional}), so we will not emphasize that point further.

\subsection{Rough paths}

\begin{definition}
Let $x$ be a path of finite variation in a Banach space $V$. Then, we define the signature $S_N(x)_{s,t}$ of level $N$ of $x$ on the interval $[s,t]$ by $$S_N(x)_{s,t} \coloneqq \left(1,\int_s^t \dd x_u,\dots,\int_{s<u_1<\dots<u_N<t}\dd x_{u_1}\otimes\dots\otimes \dd x_{u_N}\right)\in G^N(V).$$
\end{definition}

\begin{definition}
Let $\bx\in C([0,T],T_1^N(V))$ for some Banach space $V$. For $s\le t$, we define the increment $\bx_{s,t}\in T_1^N(V)$ by $$\bx_{s,t} \coloneqq \bx_s^{-1}\otimes \bx_t.$$ Moreover, we define the $p$-variation of $\bx$ by $$\|\bx\|_{p\textup{-var};[0,T]} \coloneqq \sup_{(t_i)}\left(\sum_i \vertiii{\bx_{t_i,t_{i+1}}}^p\right)^{1/p},$$ where the supremum is taken over all finite partitions $(t_i)$ of $[0,T]$, and $\vertiii{.}$ is the homogeneous norm on $T_1^N(V)$. We denote by $C^{p\var}([0,T],T_1^N(V))$ the set of such $\bx$ of finite $p$-variation. If $\bx$ takes values in $G^N(V)$, we also write $\bx\in C^{p\var}([0,T],G^N(V)).$
\end{definition}

\begin{definition}\cite[cf. Definition 9.15]{friz2010multidimensional}
Let $V$ be a finite-dimensional Banach space, let $p \ge 1$, and let $\bx\in C^{p\var}([0,T],G^{[p]}(V))$. Then, $\bx$ is called a weakly geometric $p$-rough path. If, moreover, there exists a sequence of continuous finite variation paths $(x^n)$ taking values in $V$ such that $S_{[p]}(x^n)_{0, .}\to \bx$ in $p$-variation, then we say that $\bx$ is a geometric $p$-rough path. We denote the set of weakly geometric $p$-rough paths by $C^{p\var}([0,T],V)$ and the set of geometric $p$-rough paths by $C^{p\var,0}([0,T],V)$.
\end{definition}

\begin{remark}
\begin{enumerate}
\item By definition, if $\bx$ is a geometric $p$-rough path, then $\bx_0 = \bm{1}$. This is not necessarily the case for weakly geometric $p$-rough paths. We denote by $C^{p\var}_1([0,T],V)$ the set of $\bx\in C^{p\var}([0,T],V)$ with $\bx_0 = \bm{1}$.
\item By \cite[Proposition 8.12]{friz2010multidimensional}, if $\bx\in C^{p\var}([0,T],V)$, then there exists a sequence $(x^n)$ of Lipschitz continuous paths (or, equivalently, a sequence of finite variation paths) such that
\begin{equation}\label{eqn:ConvergenceToWeakGRP}
\|S_{[p]}(x^n)_{0,.} - \bx_{0,.}\|_\infty\to 0,\qquad \sup_n\|S_{[p]}(x^n)\|_{p\var;[0,T]} < \infty.
\end{equation}
By \cite[Proposition 8.15]{friz2010multidimensional}, we can also find Lipschitz continuous paths such that $$\sup_{0\le s\le t\le T}\|S_{[p]}(x^n)_{s,t} - \bx_{s,t}\|\to 0,\qquad \sup_n\|S_{[p]}(x^n)\|_{p\var;[0,T]} < \infty.$$
\item By interpolation, we can see that $$C^{(p-\eps)\var}_1([0,T],V)\subset C^{p\var,0}([0,T],V) \subset C^{p\var}_1([0,T],V),$$ and all of these inclusions are non-trivial.
\end{enumerate}
\end{remark}

\begin{theorem}[Lyons' extension theorem]\cite[Theorem 9.5]{friz2010multidimensional}
Let $N\ge[p]\ge 1$, let $V$ be a finite-dimensional Banach space, and let $\bx\in C^{p\var}_1([0,T],V)$. Then, there exists a unique path $S_N(\bx)\in C^{p\var}([0,T],G^N(V))$ with $S_N(\bx) = \bm{1}$ and $\pi_{0,[p]}(S_N(\bx)) = \bx$. Moreover, there exists a constant $C = C(N,p)$ such that $$\|S_N(\bx)\|_{p\textup{-var}} \le C\|\bx\|_{p\textup{-var}}.$$ The path $S_N(\bx)$ is called the Lyons lift of $\bx$.
\end{theorem}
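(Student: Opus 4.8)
This is the classical Lyons extension theorem, and the plan is to build the lift one level at a time: by induction it suffices to assume we are given $\bx\in C^{p\var}([0,T],G^{n-1}(V))$ with $\bx_0=\bm{1}$ and $n-1\ge[p]$, and to construct the $n$-th level $\pi_n(S_N(\bx))$, extending $\bx$ to a $G^n(V)$-valued path of finite $p$-variation. The structural fact that drives the whole argument is that $n\ge[p]+1>p$, so the exponent $\theta\coloneqq n/p$ satisfies $\theta>1$. First I would fix a superadditive control $\omega$ with $\vertiii{\bx_{s,t}}\le\omega(s,t)^{1/p}$ — for instance $\omega(s,t)\coloneqq\|\bx\|_{p\var;[s,t]}^p$ — so that $\|\pi_k(\bx_{s,t})\|\le\omega(s,t)^{k/p}$ for $1\le k\le n-1$.

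For a partition $\mathcal{D}=\{s=u_0<\dots<u_m=t\}$ of $[s,t]$ set $\bx^{\mathcal{D}}_{s,t}\coloneqq\bx_{u_0,u_1}\otimes\dots\otimes\bx_{u_{m-1},u_m}$, each increment viewed in $T_1^n(V)$ with vanishing $n$-th level. Multiplicativity of $\bx$ in levels $\le n-1$ shows $\pi_{0,n-1}(\bx^{\mathcal{D}}_{s,t})=\bx_{s,t}$ regardless of $\mathcal{D}$, so only $\pi_n(\bx^{\mathcal{D}}_{s,t})$ depends on $\mathcal{D}$. A short computation shows that deleting an interior point $u_j$ changes $\pi_n$ only by the level-$n$ quantity $\sum_{k=1}^{n-1}\pi_k(\bx_{u_{j-1},u_j})\otimes\pi_{n-k}(\bx_{u_j,u_{j+1}})$, so by superadditivity of $\omega$ together with $\tfrac{k}{p}+\tfrac{n-k}{p}=\tfrac{n}{p}$,
\[
\bigl\|\pi_n(\bx^{\mathcal{D}}_{s,t})-\pi_n(\bx^{\mathcal{D}\setminus\{u_j\}}_{s,t})\bigr\|\le(n-1)\,\omega(u_{j-1},u_{j+1})^{n/p}.
\]
Now I would run Lyons' greedy point-removal argument: among the $m-1$ interior points there is always one with $\omega(u_{j-1},u_{j+1})\le\tfrac{2}{m-1}\omega(s,t)$; deleting points one at a time down to the trivial partition $\{s,t\}$ (where $\pi_n=0$) and summing the convergent series $\sum_{m\ge 2}(m-1)^{-n/p}<\infty$ (finite precisely because $n/p>1$) yields the uniform bound $\|\pi_n(\bx^{\mathcal{D}}_{s,t})\|\le C(n,p)\,\omega(s,t)^{n/p}$. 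The same estimate applied interval-by-interval shows $\pi_n(\bx^{\mathcal{D}}_{s,t})$ is Cauchy as $|\mathcal{D}|\to0$, since passing to a refinement changes it by at most $C\sum_i\omega(u_{i-1},u_i)^{n/p}\le C(\max_i\omega(u_{i-1},u_i))^{\frac{n}{p}-1}\omega(s,t)\to0$. Define $\pi_n(S_N(\bx)_{s,t})$ to be this limit (equivalently, this is an instance of the additive sewing lemma).

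It then remains to collect the properties. Multiplicativity $S_N(\bx)_{s,u}\otimes S_N(\bx)_{u,t}=S_N(\bx)_{s,t}$ follows by concatenating partitions of $[s,u]$ and $[u,t]$ and passing to the limit; with $\bx_0=\bm{1}$ this gives $S_N(\bx)\in C([0,T],T_1^n(V))$ with $\pi_{0,n-1}(S_N(\bx))=\bx$. The $p$-variation estimate is immediate from the level-wise bounds $\|\pi_k(S_N(\bx)_{s,t})\|\le C\,\omega(s,t)^{k/p}$ for $k\le n$: taking homogeneous norms gives $\vertiii{S_N(\bx)_{s,t}}\le C(n,p)\,\omega(s,t)^{1/p}$, whence $\|S_N(\bx)\|_{p\var}\le C(N,p)\|\bx\|_{p\var}$ after iterating the induction and taking $\omega=\|\bx\|_{p\var;[\cdot,\cdot]}^p$. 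For uniqueness, if $\widetilde{S}_N(\bx)$ is another extension then $\delta_{s,t}\coloneqq\pi_n(S_N(\bx)_{s,t})-\pi_n(\widetilde{S}_N(\bx)_{s,t})$ is additive in $(s,t)$ (the lower levels coincide, so all cross-terms cancel), has finite $p$-variation hence $\|\delta_{s,t}\|\le C\,\omega(s,t)^{n/p}$ with $n/p>1$, and an additive function dominated by a power $>1$ of a control vanishes identically; one then inducts up the levels.

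The step I expect to be the main obstacle is showing that $S_N(\bx)$ is $G^n(V)$-valued and not merely $T_1^n(V)$-valued, since a tensor product of elements of $G^{n-1}(V)$ padded with zero top level need not lie in $G^n(V)$. I would handle this by density: $\bx$ is weakly geometric, so on each subinterval it is a uniform limit, with uniformly bounded $p$-variation, of truncated signatures $S_{[p]}(x^{(m)})$ of bounded-variation paths (cf. the remark following the definition of geometric rough paths); their Lyons lifts $S_N(x^{(m)})$ are genuine signatures, hence take values in the closed set $G^N(V)$, and the construction above is continuous for this mode of convergence, so the limit $S_N(\bx)$ remains in $G^N(V)$. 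A secondary, purely bookkeeping point is ensuring the greedy-removal constant $C(N,p)$ is finite; this is exactly where $N\ge[p]$ (equivalently $n>p$ at every inductive step) enters, and if one wants the sharp constant instead of the crude one above, the neoclassical inequality provides it.
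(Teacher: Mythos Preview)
The paper does not give its own proof of this statement; it is quoted verbatim as \cite[Theorem 9.5]{friz2010multidimensional} and used as a black box. Your proposal is the standard proof found in that reference (induction on the level, Young/Lyons greedy point-removal to build the new level, additivity plus $n/p>1$ for uniqueness, and approximation by bounded-variation paths for $G^N$-valuedness), and it is correct.
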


The notation might seem slightly ambiguous, as $S_N$ can be both applied to a finite variation path $x$ and the associated geometric $1$-rough path $\bx$ (which is given by $\bx_t = S_1(x)_{0,t} = (1,x_t-x_0).$) However, we have the simple relation $$S_N(\bx)_t = S_N(x)_{0,t}.$$

\begin{definition}
A control function is a function $$\omega:\{(s,t)\in[0,T]^2\colon s\le t\}\to[0,\infty),$$ that is continuous, $0$ on the diagonal, and super-additive, i.e. for all $0\le s\le t\le u\le T$, we have $$\omega(s,t) + \omega(t,u) \le \omega(s,u).$$
\end{definition}

Let $\bx\in C^{p\var}([0,T],T_1^N(V))$. By \cite[Proposition 5.8]{friz2010multidimensional}, $\omega(s,t)\coloneqq \|\bx\|_{p\textup{-var};[s,t]}^p$ is a control function.

\begin{definition}
Let $\bx\in C^{p\var}([0,T],T_1^N(V))$ with $N\ge [p]\ge 1$ and let $\omega$ be a control. We say that $\bx$ is controlled by $\omega$ if for all $0\le s \le t\le T$, we have $$\vertiii{\bx_{s,t}} \le \omega(s,t)^{1/p}.$$
\end{definition}

Since controls are super-additive, we see that if $\bx\in C^{p\var}([0,T],T_1^N(V))$ is controlled by $\omega$, then $$\|\bx\|_{p\textup{-var};[s,t]}\le \omega(s,t)^{1/p}$$ for all $0\le s\le t\le T$. Then, by Lyons' extension theorem, if $\bx\in C^{p\var}([0,T],V)$ is controlled by $\omega$, then $S_N(\bx)$ is controlled by $C\omega$ for some constant $C = C(N,p)$.

\begin{definition}\cite[cf. Definition 8.6]{friz2010multidimensional}
Let $\bx,\by\in C^{p\var}([0,T],T_1^N(V))$, where $N\ge [p]\ge 1$. Then, we define the (inhomogeneous) variation distance $$\rho_{p\textup{-var};[0,T]}(\bx,\by) \coloneqq \sup_{(t_i)}\max_{k=1,\dots,N}\left(\sum_i\|\pi_k(\bx_{t_i,t_{i+1}} - \by_{t_i,t_{i+1}})\|^{p/k}\right)^{k/p},$$ where the supremum is taken over all finite partitions $(t_i)$ of $[0,T]$. Moreover, if $\omega$ is a control, we define the (inhomogeneous) $p\textup{-}\omega$ distance by $$\rho_{p,\omega;[0,T]}(\bx,\by) \coloneqq \sup_{0\le s\le t\le T}\max_{k=1,\dots,N}\frac{\|\pi_k(\bx_{s,t}-\by_{s,t})\|}{\omega(s,t)^{k/p}}.$$
\end{definition}

We remark that convergence in $p$-variation and convergence in $\rho_{p\textup{-var}}$ are equivalent by \cite[Theorem 8.10]{friz2010multidimensional}.

We also remark that for $\bx\in C^{p\var}_1([0,T],V)$, the convergence $S_{[p]}(x^n)\to\bx$ in $p$-variation implies the convergence $S_N(x^n)\to S_N(\bx)$ in $p$-variation by the equivalence of convergence in $p$-variation and in the $\rho_{p\textup{-var}}$-metric combined with \cite[Theorem 9.10]{friz2010multidimensional}.

\subsection{Lipschitz continuous vector fields}

\begin{definition}
Given two vector spaces $U,V,$ and $n\ge 1$, a map $A:U^{\otimes n}\to V$ is said to be a symmetric $n$-linear map if $A$ is linear and if for all $\sigma\in S_n$ and $u\in U^{\otimes n}$, we have $$A(\sigma u) = Au.$$
\end{definition}

We give the definition of Lipschitz functions in the sense of Stein.

\begin{definition}\cite[Definition 1.21]{lyons2007differential}
Let $U,V$ be two finite-dimensional Banach spaces, let $F\subseteq U$ be a closed set, and let $\gamma > 0$. Let $f^0:F\to V$ be a function, and, for $j=1,\dots,\lfloor\gamma\rfloor$, let $f^j:F\to L(U^{\otimes j},V)$ be functions taking values in the symmetric $j$-linear mappings from $U$ to $V$. The collection $f = (f^j)_{j=0}^{\lfloor\gamma\rfloor}$ is an element of $\lip^\gamma(F,V)$ if the following condition holds.

There exists a constant $M$ such that, for all $j=0,\dots,\lfloor\gamma\rfloor$, $$\sup_{x\in F}\|f^j(x)\| \le M,$$ and if there exists a function $R_j:F\times F\to L(U^{\otimes j},V)$ such that for all $x,y\in F$ and $v\in U^{\otimes j}$, $$f^j(y)(v) = \sum_{l=0}^{\lfloor\gamma\rfloor-j}\frac{1}{l!}f^{j+l}(x)(v\otimes (y-x)^{\otimes l}) + R_j(x,y)(v),$$ and $$\|R_j(x,y)\| \le M\|y-x\|^{\gamma-j}.$$ The smallest constant $M$ for which these inequalities hold is called the $\lip^\gamma(F,V)$-norm of $f$, and is denoted by $\|f\|_{\lip^\gamma}.$
\end{definition}

By the discussion in \cite[Section VI.2]{stein2016singular}, it is clear that if $f\in \lip^\gamma(F,V)$, and if $G\subseteq F$ is an open subset of $F$, then $f$ is $\lfloor\gamma\rfloor$ times differentiable in $G$ and the $\lfloor\gamma\rfloor$th derivative is $\{\gamma\}$-Hölder continuous. Moreover, we have $f^{(j)} = f^j$ on $G$ for $j=0,\dots,\lfloor\gamma\rfloor$, where $f^{(j)}$ denotes the $j$th derivative. In particular, if $f\in \lip^\gamma(U,V)$, we may identify $f$ with $f^0$.

\begin{definition}
Let $U,V$ be two finite-dimensional Banach spaces, let $\gamma > 0$ and let $f:U\to V$ be $\lfloor\gamma\rfloor$ times differentiable. We say that $f\in \lip^\gamma_{\textup{loc}}(U,V)$ if for all compact subsets $K\subseteq U$ the restriction $g_K \coloneqq (f|_K,f'|_K,\dots,f^{(\lfloor\gamma\rfloor)}|_K)\in \lip^\gamma(K,V).$ The space $\lip^\gamma_{\textup{loc}}(U,V)$ becomes a locally convex space with the semi-norms $$\|f\|_K\coloneqq \|g_K\|_{\lip^\gamma}$$ for all compact $K\subseteq U$.
\end{definition}

\begin{theorem}\label{thm:WhitneysTheoremStein}\cite[Chapter VI.2]{stein2016singular}
Let $U, V$ be finite-dimensional Banach spaces, let $F\subseteq U$ be a closed set, and let $f\in \lip^\gamma(F,V)$. Then there exists an extension $\widetilde{f}\in \lip^\gamma(U,V)$ with $\widetilde{f}|_F = f$, and a constant $C$ independent of $f$ and $F$ such that $$\|\widetilde{f}\|_{\lip^\gamma} \le C\|f\|_{\lip^\gamma}.$$
\end{theorem}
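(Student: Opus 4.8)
The plan is to give a classical Whitney-type extension by a Whitney decomposition of the complement of $F$, essentially following Stein's Chapter VI; I only sketch the structure. First I would carry out the obvious reductions. Since $V$ is finite-dimensional, applying the result to each coordinate of $f$ costs only a constant depending on $\dim V$, so I may take $V = \R$. Since all norms on the finite-dimensional space $U$ are equivalent with constants depending only on $\dim U$, I may take $U = \R^n$ with the Euclidean norm, so that dyadic cubes are available. The set $U\setminus F$ is open, and $\widetilde f|_F = f$ is understood in the jet sense, i.e. $\widetilde f^{(j)}|_F = f^j$ for $j = 0,\dots,\lfloor\gamma\rfloor$. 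Throughout I would track that every constant depends only on $\gamma$ and $n$, and that every estimate is positively homogeneous of degree one in the data $f = (f^j)_{j=0}^{\lfloor\gamma\rfloor}$; this homogeneity is exactly what produces the bound $\|\widetilde f\|_{\lip^\gamma}\le C\|f\|_{\lip^\gamma}$ with $C$ independent of $f$ and $F$.

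Next I would set up the standard machinery. Decompose $\R^n\setminus F$ into dyadic cubes $\{Q_k\}$ with pairwise disjoint interiors such that $\operatorname{diam}Q_k$ is comparable to $\operatorname{dist}(Q_k,F)$, and such that the slightly dilated cubes $Q_k^\ast$ have bounded overlap (the overlap number depending only on $n$). Attach a smooth partition of unity $\{\varphi_k\}$ with $\sum_k\varphi_k\equiv 1$ on $\R^n\setminus F$, $\operatorname{supp}\varphi_k\subseteq Q_k^\ast$, $0\le\varphi_k\le 1$, and $\|D^\alpha\varphi_k\|_\infty\le C_\alpha(\operatorname{diam}Q_k)^{-|\alpha|}$. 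For each $k$ pick $p_k\in F$ essentially nearest to $Q_k$, and let $P_k$ be the degree-$\lfloor\gamma\rfloor$ Taylor polynomial of the jet $f$ based at $p_k$, i.e. $P_k(x)=\sum_{j=0}^{\lfloor\gamma\rfloor}\frac{1}{j!}f^j(p_k)\big((x-p_k)^{\otimes j}\big)$. Then define $\widetilde f:=f^0$ on $F$ and $\widetilde f:=\sum_k\varphi_k P_k$ on $\R^n\setminus F$; this sum is locally finite by the bounded-overlap property.

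On $\R^n\setminus F$ the function $\widetilde f$ is visibly $C^\infty$, so all the work lies in showing that $\widetilde f$ is $\lfloor\gamma\rfloor$-times differentiable across $F$ with $\widetilde f^{(j)} = f^j$ on $F$, and that $\|D^j\widetilde f\|_\infty$ is controlled for $j\le\lfloor\gamma\rfloor$ with $D^{\lfloor\gamma\rfloor}\widetilde f$ being $\{\gamma\}$-H\"older, all with the stated constants. The main tool is the following: whenever $Q_k^\ast\cap Q_l^\ast\ne\emptyset$ the points $p_k,p_l$ lie at comparable distances, and the Taylor-remainder hypothesis defining $\lip^\gamma(F,V)$ forces $\|D^\alpha(P_k-P_l)\|$ on $Q_k^\ast$ to be $O\big(\|f\|_{\lip^\gamma}(\operatorname{diam}Q_k)^{\gamma-|\alpha|}\big)$; combined with $\sum_k D^\alpha\varphi_k\equiv 0$ for $|\alpha|\ge 1$ (so that near a given point $D^\alpha\sum_k\varphi_kP_k$ can be rewritten using only differences $P_k-P_{k_0}$), this yields the interior estimates. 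For $x\notin F$ and a near-point $p\in F$, every $Q_k$ with $\varphi_k(x)\ne 0$ satisfies $\operatorname{diam}Q_k\approx\operatorname{dist}(x,F)\le|x-p|$ and $|x-p_k|\lesssim|x-p|$, so expanding $P_k$ and $D^\alpha P_k$ about $p$ and invoking the $\lip^\gamma$ remainder bounds shows $\widetilde f$ and its derivatives match the jet of $f$ at $p$ up to errors $O(|x-p|^{\gamma-|\alpha|})$; together with the interior estimates and a telescoping/covering argument this gives both the differentiability of $\widetilde f$ at points of $F$ and the global $\lip^\gamma$-bound.

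The step I expect to be the main obstacle is precisely this last verification near $F$: the careful bookkeeping that combines the geometry of the Whitney decomposition (comparability of cube sizes and near-points for overlapping cubes, bounded overlap), the derivative bounds on the partition of unity, and the quantitative Taylor-remainder estimates encoded in the definition of $\lip^\gamma$, in order to establish simultaneously $C^{\lfloor\gamma\rfloor}$-regularity across $F$, the matching of all derivatives with the prescribed jet, and the $\{\gamma\}$-H\"older modulus of the top derivative — with constants uniform in $F$ and linear in $\|f\|_{\lip^\gamma}$. Everything else is either a routine reduction or a standard construction, so I would likely be content to cite \cite[Chapter VI.2]{stein2016singular} for the details of this part.
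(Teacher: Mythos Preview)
Your proposal is correct and sketches the classical Whitney extension argument from \cite[Chapter VI.2]{stein2016singular}. Note that the paper does not supply its own proof of this theorem at all --- it is stated purely as a citation to Stein --- so your outline is in fact more detailed than what appears in the paper, and follows exactly the approach of the cited reference.
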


\subsection{RDE solutions}\label{sec:RDESolutions}

Given two Banach spaces $U,V$, and $\gamma >0$, we define the spaces of vector fields
\begin{align*}
\mathcal{V}(V,U) &\coloneqq L(V,C(U,U)),\\
\mathcal{V}^\gamma(V,U) &\coloneqq L(V,\lip^\gamma_\loc(U,U)),\\
\mathcal{V}^{\gamma+}(V,U) &\coloneqq \bigcup_{\eps > 0} \mathcal{V}^{\gamma+\eps}(V,U).
\end{align*}

We recall the definition of an RDE solution.

\begin{definition}\cite[Definition 10.17]{friz2010multidimensional}
Let $\bx\in C^{p\var}([0,T],\R^d)$, let $f\in\mathcal{V}(\R^d,\R^e)$, and let $y_0\in \R^e$. We call a path $y\in C([0,T],\R^e)$ a solution to the rough differential equation
\begin{align}
\dd y_t = f(y_t) \sdd\bx_t,\qquad y_0 = y_0,\label{eqn:FirstLevelRDE1}
\end{align}
if there exists a sequence of finite variation paths $(x^n)$ satisfying \eqref{eqn:ConvergenceToWeakGRP}, such that there exist solutions $y^n$ to the ODEs $$\dd y^n_t = f(y^n_t)\sdd x^n_t,\qquad y^n_0 = y_0$$ with $$\|y^n-y\|_\infty\to 0.$$
\end{definition}

As the following lemma demonstrates, is immediately clear that ODE solutions are RDE solutions.

\begin{lemma}\label{lem:ODESolutionsAreRDESolutions}
Let $x\in C([0,T],\R^d)$ be of finite variation, let $f\in\mathcal{V}(\R^d,\R^e)$, and let $y_0\in\R^e$. We can naturally associate to $x$ the geometric $p$-rough path $\bx = S_{[p]}(x)$. Let $y\in C([0,T],\R^e)$ be a solution to the ODE $$\dd y_t = f(y_t) \sdd x_t,\qquad y_0 = y_0.$$ Then $y$ is a solution to the RDE \eqref{eqn:FirstLevelRDE1}.
\end{lemma}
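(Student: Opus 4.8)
The plan is to verify the definition of an RDE solution directly, using the \emph{constant} approximating sequence rather than building anything new. Concretely, I would set $x^n \coloneqq x$ and $y^n \coloneqq y$ for every $n \in \N$. Since $y$ is by hypothesis a solution of the ODE $\dd y_t = f(y_t)\sdd x_t$, $y_0 = y_0$, each $y^n$ is a solution of the ODE driven by $x^n$, and $\|y^n - y\|_\infty = 0 \to 0$. So the only thing that genuinely has to be checked is that the sequence $(x^n)_n$ satisfies the approximation condition \eqref{eqn:ConvergenceToWeakGRP} with respect to $\bx = S_{[p]}(x)$.

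For the first half of \eqref{eqn:ConvergenceToWeakGRP} I would simply observe that $S_{[p]}(x^n)_{0,\cdot} = S_{[p]}(x)_{0,\cdot} = \bx_{0,\cdot}$ for every $n$ (recall $\bx_t = S_{[p]}(x)_{0,t}$ and $\bx_0 = \bm{1}$, so $\bx_{0,t} = \bx_t = S_{[p]}(x)_{0,t}$), hence $\|S_{[p]}(x^n)_{0,\cdot} - \bx_{0,\cdot}\|_\infty = 0 \to 0$ trivially. For the second half I would note that $\sup_n \|S_{[p]}(x^n)\|_{p\var;[0,T]} = \|\bx\|_{p\var;[0,T]}$, which is finite because $x$ is of finite variation: its iterated integrals up to level $[p]$ are again of finite variation, so $\bx \in C^{1\var}([0,T],G^{[p]}(\R^d)) \subseteq C^{p\var}([0,T],G^{[p]}(\R^d))$; alternatively one invokes Lyons' extension theorem with $p=1$ applied to $S_1(x)$.

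Putting these two observations together shows that $(x^n)_n$ is a legitimate approximating sequence in the sense of \eqref{eqn:ConvergenceToWeakGRP}, and combined with the trivial convergence $y^n \to y$ of the ODE solutions this is precisely the assertion that $y$ solves the RDE \eqref{eqn:FirstLevelRDE1}. There is no real obstacle here; the only (very minor) point worth spelling out is the bookkeeping remark that the truncated signature of a bounded variation path has finite $p$-variation, after which everything is immediate from the definitions.
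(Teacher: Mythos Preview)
Your proposal is correct and follows exactly the same approach as the paper: take the constant sequence $x^n = x$, $y^n = y$ and observe that the definition of an RDE solution is then satisfied trivially. The paper's proof is a single sentence to this effect; you have simply spelled out the verification of \eqref{eqn:ConvergenceToWeakGRP} in more detail than the paper bothers to.
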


\begin{proof}
This follows immediately from the definition of RDE solutions by taking the sequence $(x^n)$ with $x^n = x$, and the solutions $y^n = y$.
\end{proof}

The converse direction is not always true, i.e. there may exist RDE solutions that are not ODE solutions. Below, we give such an example.

\begin{example}\label{ex:CounterExample}
For simplicity of exposition, we work in $\C = \R^2$ here, and consider integrals instead of differential equations. We will see in Section \ref{sec:RoughIntegrals} that integrals are in fact a special case of differential equations.

Consider the trivial finite variation path $x:[0,1]\to \R^2$ given by $x_t = 0$, and, for $\gamma>1$ the vector field $f(z) = \frac{\abs{z}^\gamma}{z}\in \lip^{\gamma-1}$. Clearly, $$\int_0^t f(x_t) \sdd x_t = 0$$ in the ODE (or rather, Riemann-Stieltjes) sense. However, interpreting the above as a rough integral, we can choose the paths $x^n$ as $n$ times the circle with radius $n^{-1/p}$. For $p\in(1,2)$, $\|S_1(x^n)\|_{p-\textup{var}}$ stays bounded and $(x^n)$ converges uniformly to $x$. However, the corresponding ODE solutions converge uniformly to $2\pi i t$, yielding a second RDE solution.
\end{example}

We show the following consistency relation of RDEs.

\begin{lemma}\label{lem:RoughPathDifferentPConsistency}
Let $\bx\in C^{p\var}([0,T],\R^d)$, let $f\in\mathcal{V}(\R^d,\R^e)$, and let $y_0\in\R^e$. Let $q\ge p \ge 1$, and let $\widetilde{\bx}\coloneqq S_{[q]}(\bx)$. Then, $\widetilde{\bx}\in C^{q\var}([0,T],\R^d)$ with $\|\widetilde{\bx}\|_{q\var}\le C\|\bx\|_{p\var}$ for some $C=C(p,q)$, and every solution $y$ to the RDE $$\dd y_t = f(y_t) \sdd\bx_t,\qquad y_0 = y_0$$ is a solution to the RDE $$\dd y_t = f(y_t) \sdd\widetilde{\bx}_t,\qquad y_0 = y_0.$$
\end{lemma}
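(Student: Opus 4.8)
The statement has two parts: the quantitative bound $\|\widetilde{\bx}\|_{q\var} \le C\|\bx\|_{p\var}$ together with the membership $\widetilde{\bx} = S_{[q]}(\bx) \in C^{q\var}([0,T],\R^d)$, and the claim that solutions transfer from the RDE driven by $\bx$ to the one driven by $\widetilde{\bx}$. The first part is essentially a consequence of Lyons' extension theorem as stated in the excerpt. The plan is to first observe that $\bx \in C^{p\var}([0,T],\R^d)$ means $\bx \in C^{p\var}_1([0,T],\R^d)$ (since $\bx_0 = \bm 1$ after the increment convention, or we simply work with increments), and since $q \ge p$ we have $[q] \ge [p]$, so Lyons' extension theorem applies and gives a unique $S_{[q]}(\bx) \in C^{p\var}([0,T],G^{[q]}(\R^d))$ with $\|S_{[q]}(\bx)\|_{p\var} \le C\|\bx\|_{p\var}$. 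Since $p \le q$, finite $p$-variation implies finite $q$-variation with $\|\cdot\|_{q\var} \le \|\cdot\|_{p\var}$ (monotonicity of $p$-variation norms), giving $\widetilde{\bx} \in C^{q\var}([0,T],\R^d)$ and the stated bound with $C = C(p,q)$.

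For the transfer of solutions, the plan is to argue directly from the definition of an RDE solution. If $y$ solves $\dd y_t = f(y_t)\sdd\bx_t$ with $y_0 = y_0$, then by definition there is a sequence $(x^n)$ of finite variation paths with $\|S_{[p]}(x^n)_{0,\cdot} - \bx_{0,\cdot}\|_\infty \to 0$ and $\sup_n \|S_{[p]}(x^n)\|_{p\var} < \infty$, together with ODE solutions $y^n$ to $\dd y^n_t = f(y^n_t)\sdd x^n_t$ with $\|y^n - y\|_\infty \to 0$. I claim the \emph{same} sequence $(x^n)$ and the \emph{same} ODE solutions $(y^n)$ witness that $y$ solves the RDE driven by $\widetilde{\bx}$. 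Indeed, $\widetilde{\bx} = S_{[q]}(\bx)$ and $S_{[q]}(x^n) = $ Lyons lift of $S_{[p]}(x^n)$; I need $\|S_{[q]}(x^n)_{0,\cdot} - \widetilde{\bx}_{0,\cdot}\|_\infty \to 0$ and $\sup_n \|S_{[q]}(x^n)\|_{p\var} < \infty$. The latter follows from $\sup_n \|S_{[p]}(x^n)\|_{p\var} < \infty$ and the estimate in Lyons' extension theorem. For the former, uniform convergence $S_{[p]}(x^n)_{0,\cdot} \to \bx_{0,\cdot}$ together with the uniform $p$-variation bound implies (by the remark in the excerpt following the definition of $\rho_{p\var}$, or by interpolation combined with \cite[Theorem 9.10]{friz2010multidimensional}) that $S_{[q]}(x^n) \to S_{[q]}(\bx)$ uniformly, since the Lyons lift map is continuous in the appropriate topology on bounded $p$-variation sets. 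The ODE $\dd y^n_t = f(y^n_t)\sdd x^n_t$ is unchanged — it refers only to the finite variation path $x^n$, not to any lift — so the same $y^n$ with the same uniform convergence $y^n \to y$ exhibits $y$ as a solution to the RDE driven by $\widetilde{\bx}$.

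The main obstacle is the continuity of the Lyons lift $\bz \mapsto S_{[q]}(\bz)$ in the \emph{uniform} topology on sets of uniformly bounded $p$-variation — i.e., upgrading $\|S_{[p]}(x^n)_{0,\cdot} - \bx_{0,\cdot}\|_\infty \to 0$ to $\|S_{[q]}(x^n)_{0,\cdot} - S_{[q]}(\bx)_{0,\cdot}\|_\infty \to 0$. This is handled by the interpolation argument already flagged in the excerpt (the remark that $S_{[p]}(x^n) \to \bx$ in $p$-variation implies $S_N(x^n) \to S_N(\bx)$ in $p$-variation, via equivalence of $p$-variation and $\rho_{p\var}$ convergence combined with \cite[Theorem 9.10]{friz2010multidimensional}, and the fact that uniform convergence plus uniform $p$-variation bounds gives $(p+\eps)$-variation convergence by interpolation), so no new machinery is needed; one just has to assemble these cited facts carefully. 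Everything else is bookkeeping with the definition of RDE solutions.
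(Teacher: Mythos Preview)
Your proposal is correct and follows essentially the same route as the paper: both use Lyons' extension theorem for the $p$-variation bound on $S_{[q]}(\bx)$ (then monotonicity to pass to $q$-variation), and both show that the \emph{same} approximating sequence $(x^n)$ witnesses the solution for $\widetilde{\bx}$ by upgrading uniform convergence plus uniform $p$-variation bounds to $r$-variation convergence for some $r\in(p,[p]+1)$ via interpolation, then invoking continuity of the Lyons lift (the paper cites \cite[Proposition~5.5]{friz2010multidimensional} and \cite[Corollary~9.11]{friz2010multidimensional}, you cite interpolation and \cite[Theorem~9.10]{friz2010multidimensional}; these are equivalent here). One cosmetic slip: where you write $\sup_n\|S_{[q]}(x^n)\|_{p\var}<\infty$, the definition of an RDE solution driven by $\widetilde{\bx}\in C^{q\var}$ requires $\sup_n\|S_{[q]}(x^n)\|_{q\var}<\infty$, but of course your stronger $p$-variation bound implies this.
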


\begin{remark}
It is not clear to the authors whether the above RDEs are equivalent, i.e. whether every solution of the RDE w.r.t. $\widetilde{\bx}$ is a solution w.r.t. $\bx$.
\end{remark}

\begin{proof}
If $q=p$, the statement is trivial. Assume that $q > p$.

First, we note that $\widetilde{\bx}$ is indeed a weakly geometric $q$-rough path. It obviously takes values in $G^{[q]}(\R^d)$, so the only thing to show is that it is of finite $q$-variation. This follows by using \cite[Proposition 5.3]{friz2010multidimensional} and \cite[Theorem 9.5]{friz2010multidimensional} from $$\|\widetilde{\bx}\|_{q\var} \le \|S_{[q]}(\bx)\|_{p\var} \le C\|\bx\|_{p\var},$$ where we additionally remark that $C$ depends only on $p,q$. 

Let $y$ be a solution to the RDE driven by $\bx$. By definition, there exists a sequence of finite variation paths $(x^n)$ with $$\|S_{[p]}(x^n) - \bx\|_\infty\to 0,\qquad \sup_n\|S_{[p]}(x^n)\|_{p\var} < \infty,$$ and ODE solutions $y^n$ with $$\|y^n-y\|_\infty \to 0.$$ Let $r\in (p,[p]+1)$. By \cite[Proposition 5.5]{friz2010multidimensional}, $S_{[r]}(x^n) = S_{[p]}(x^n)$ converges to $\bx$ in $r$-variation. Then,
\begin{align*}
\|S_{[q]}(x^n) - \widetilde{\bx}\|_\infty &\le \|S_{[q]}(S_{[p]}(x^n)) - S_{[q]}(\bx)\|_{r\var} \to 0
\end{align*}
by \cite[Corollary 9.11]{friz2010multidimensional}. Moreover, $$\sup_n\|S_{[q]}(x^n)\|_{q\var} \le \sup_n\|S_{[q]}(x^n)\|_{p\var} \le C\sup_n\|S_{[p]}(x^n)\|_{p\var} < \infty$$ as before. By the definition of an RDE solution for the driving path $\widetilde{\bx}$, we conclude that $y$ is a solution.
\end{proof}

We now recall the following definition of full solutions to full RDEs.

\begin{definition}\cite[Definition 10.34]{friz2010multidimensional}
Let $\bx\in C^{p\var}([0,T],\R^d)$, let $f\in\mathcal{V}(\R^d,\R^e)$, and let $\by_0\in G^{[p]}(\R^e)$. We call a path $\by\in C([0,T],G^{[p]}(\R^e))$ a solution to the full RDE $$\dd\by_t = f(y_t) \sdd\bx_t,\qquad \by_0 = \by_0,$$ if there exists a sequence of finite variation paths $(x^n)$ satisfying \eqref{eqn:ConvergenceToWeakGRP}, and such that there exist solutions $y^n$ to the ODEs $$\dd y^n_t = f(y^n_t) \sdd x^n_t,\qquad y^n_0 = \pi_1(\by_0) = y_0$$ with $$\|\by_0\otimes S_{[p]}(y^n)_{0, .} - \by_.\|_\infty\to 0.$$
\end{definition}

Of course, if $\by$ is a solution to the full RDE above, then $\by\in C^{p\var}([0,T],\R^e)$ is again a weakly geometric $p$-rough path.

\begin{definition}\label{def:FullVectorField}
For every $f\in\mathcal{V}(\R^d,\R^e)$ and for all $N\ge 1$, we define the associated full vector field $\bm{f}\in\mathcal{V}(\R^d,T_1^N(\R^e))$ by $$\bm{f}(\bz) \coloneqq \bz \otimes f(\pi_1(\bz)).$$
\end{definition}

We recall that by \cite[Theorem 10.35]{friz2010multidimensional}, a full RDE solution with respect to the vector field $f$ is just a first-level RDE solution with respect to the vector field $\bm{f}$. (As a minor side remark, the vector field in \cite[Theorem 10.35]{friz2010multidimensional} actually acts on $T^N(\R^e)$, not the smaller space $T_1^N(\R^e)$. However, since full RDE solutions always take values in $G^N(\R^e)$, which is a subset of $T_1^N(\R^e)$, this does not change the situation. The restriction to $T_1^N(\R^e)$ will become relevant in the forthcoming Lemma \ref{lem:NonExplosionExtendsToFullRDEs}.)

\subsection{Rough integrals}\label{sec:RoughIntegrals}

First, we recall the definition of a rough integral.

\begin{definition}\cite[cf. Definition 10.44]{friz2010multidimensional}
Let $\bx\in C^{p\var}([0,T],\R^d)$, and let $f\in L(\R^d,C(\R^d,\R^e))$. We say that $\by\in C([0,T],G^{[p]}(\R^e))$ is a rough integral of $f$ along $\bx$, if there exists a sequence $(x^n)$ of finite variation paths with $x^n_0 = \pi_1(\bx_0)$ satisfying \eqref{eqn:ConvergenceToWeakGRP}, and $$\left\|S_{[p]}\left(\int_0^. f(x^n_u)dx^n_u\right)_{0, .} - \by_.\right\|_\infty\to 0.$$ If $\by$ is the unique rough integral, we write $$\int f(x) \sdd\bx \coloneqq \by.$$
\end{definition}

As remarked in \cite[Section 10.6]{friz2010multidimensional}, the rough integral $\int f(x) d\bx$ is in fact given as the projection onto the last $e$ coordinates of a solution to the full RDE $$d\bz_t = f_1(z_t)d\bx_t,\qquad \bz_0 = \bz_0,$$ where $f_1\in\mathcal{V}(\R^d,\R^{d+e})$, $$f_1(x,y) = 
\begin{pmatrix}
\id\\
f(x)
\end{pmatrix}
,$$
and where $\bz_0$ is any element with $\bz_0=(\bx_0,\bm{1}).$

\section{A cost model for the Log-ODE method}\label{sec:CostModel}

If we apply the error representation formula for the Log-ODE method on a given partition, we know which intervals of the partition contribute most to the global error. To compute the solution more accurately on these intervals, we can either refine the partition or increase the degree. The aim of this section is to determine which action is more efficient in a given situation.

\subsection{Theoretical considerations}

First, let us develop a theoretical cost model for solving RDEs using the Log-ODE method.

Recall that the vector field of the degree $N$ Log-ODE method is given by $$\sum_{k=1}^N f^{\circ k} \pi_k(\log_N(\bg))(y).$$ Here, $f^{\circ k}$ is given as a tensor-valued function $$f^{\circ k}\colon \R^e \to \R^e\otimes (\R^d)^{\otimes k}.$$ This function is evaluated, and the result is then contracted with the $k$-tensor $\pi_k(\log_N(\bg))\in (\R^d)^{\otimes k}$, and finally summed over $k$.

The function $f^{\circ k}$ has $ed^k$ components, while $\pi_k(\log N(\bg))$ has $d^k$ components. Assume that evaluating one component of the function $f^{\circ k}$ or the log-signature $\pi_k(\log_N (\bg))$ carries a cost of $c_N$. The total cost of evaluating the Log-ODE vector field is hence roughly of order $$\sum_{k=1}^N e c_k d^k \approx e c_N d^N,$$ assuming that $(c_k)$ is non-decreasing.

Next, assume that we are using a partition of length $n$. On each interval we have to solve an ODE, and the cost of solving that ODE is assumed to be approximately proportional to the number of calls of the vector field times the cost of evaluating the vector field. Since we are working with an adaptive algorithm, it seems reasonable to assume that the ODEs of the various intervals of the partition are roughly equally difficult to solve, i.e. the number of calls of the vector field is roughly constant. Therefore, the cost of solving the RDE using $n$ intervals of level $N$ is roughly of order $nec_Nd^N$. If we use $(n_i)_{i=1}^\infty$ intervals of level $i$, we roughly have a cost of order $$e\sum_{i=1}^\infty n_i c_i d^i.$$

Next, we ask the question how the error changes if we increase $N$ or subdivide the interval.

Standard error bounds for the Log-ODE method on a single interval $[s,t]$ are of the form $$a_N \omega(s, t)^{\frac{N+1}{p}},$$ where $a_N$ is some constant that depends, among other things, on $N$, and $\omega$ is a control associated to the problem (essentially the $p$-variation of $\bx$ multiplied with the local Lipschitz norm of $f$). This error is then of course propagated to the end. Since the error representation formula already takes care of the error representation formula, we may assume that there is a control function $\omega$, such that we roughly have $$\abs{g(y_T) - g(\overline{y}_T)} \le \sum_{k=1}^n \abs{\left(\int_0^1 \Psi(t_k, \overline{y}_{t_k} + s e(t_k)) \sdd s\right) e(t_k)} \approx \sum_{k=1}^n a_{N_k} \omega(t_k,t_{k+1})^{\frac{N_k + 1}{p}},$$ assuming that we use the Log-ODE method of level $N_k$ on the interval $[t_k,t_{k+1}]$. 

\subsection{Suggestions for the implementation}

Given these models for the cost and the error, we now try to answer the question on when to refine the partition, and when to increase the degree. Assume for a moment that we know the constants $c_i$ and $a_i$, and the control $\omega$. Then, refining an an interval $[t_k,t_{k+1}]$ into $m$ subintervals of equal length (w.r.t. the control $\omega$) leads to an increase in the computational cost for the interval $[t_k,t_{k+1}]$ by a factor of $m$, while it decreases the error on this interval by a factor of $m \cdot m^{-\frac{N_k+1}{p}} = m^{-\left(\frac{N_k + 1}{p} - 1\right)}.$ On the other hand, increasing the degree from $N_k$ to $N_k + 1$ increases the computational cost by a factor of $\frac{c_{N_k+1}}{c_{N_k}}d$, while decreasing the error by a factor of $\frac{a_{N_k+1}}{a_{N_k}}\omega(t_k,t_{k+1})^{1/p}.$

To achieve the same decrease in the error by refining the interval, as by increasing the degree, we need to choose $$m = \left(\frac{a_{N_k}}{a_{N_k+1}}\right)^{\frac{p}{N_k + 1 - p}}\omega(t_k,t_{k+1})^{-\frac{1}{N_k + 1 - p}}.$$ Comparing the costs of these two operations, we see that it is more efficient to refine the interval if
\begin{align}
\left(\frac{a_{N_k}}{a_{N_k+1}}\right)^{\frac{p}{N_k + 1 - p}}\omega(t_k,t_{k+1})^{-\frac{1}{N_k + 1 - p}} \le \frac{c_{N_k + 1}}{c_{N_k}}d,\label{eqn:RefinementRule}
\end{align}
while it is more efficient to increase the degree otherwise.

This gives us a well-defined method for deciding when to increase the degree and when to refine the interval, assuming that we know $a_i$, $c_i$ and $\omega$. Hence, it merely remains to estimate these quantities. First, note that given $a_i$ and $c_i$, it is clear what $\omega$ is on the intervals $[t_k, t_{k+1}]$, since $$\omega(t_k, t_{k+1})^{\frac{N_k + 1}{p}} \approx \frac{1}{a_{N_k}} \abs{\left(\int_0^1 \Psi(t_k, \overline{y}_{t_k} + s e(t_k)) \sdd s\right) e(t_k)}$$ by definition. 

Moreover, the rule \eqref{eqn:RefinementRule} makes it clear that we only care about the fractions of successive $a_i$ and $c_i$, and not about their absolute values. Setting, say, $a_1 = c_1 = 1$, we can inductively determine $a_{i+1}$ (resp. $c_{i+1}$) from $a_i$ (resp. $c_i$) by picking an interval $[t_k, t_{k+1}]$, and using the Log-ODE method of level $i$ and $i+1$. By comparing the computational times $T_i$ and $T_{i+1}$, we get an estimate for $c_{i+1}$ since $\frac{c_{i+1}}{c_i} d = \frac{T_{i+1}}{T_i}$ by assumption. 

Determining $a_{i+1}$ is only slightly more involved. Denote the estimates of the propagated local errors, which we obtain by the error representation formula, by $e_i$ and $e_{i+1}$. By assumption, $$e_j = a_j \omega(t_k, t_{k+1})^{\frac{j+1}{p}},\qquad j=i, i+1,$$ indicating $$\omega(t_k, t_{k+1})^{1/p} = \left(\frac{a_i}{e_i}\right)^{\frac{1}{i+1}},\qquad a_{i+1} = \frac{e_{i+1}}{\omega(t_k,t_{k+1})^{\frac{i+2}{p}}} = e_{i+1}\left(\frac{e_i}{a_i}\right)^{\frac{i+2}{i+1}}.$$

In practice, if we have never increased the degree from $i$ to $i+1$ before, we can do this once to get a rough estimate of $c_{i+1}$ and $a_{i+1}$. Afterwards, we can refine these estimates every time we increase the degree from $i$ to $i+1$ on other intervals (e.g. by taking the medians of the estimators).

\section{The Euler approximation}\label{sec:Euler}

Let $\bx\in C^{p\var}([0,T],\R^d)$, let $f\in \mathcal{V}^{N,p}(\R^d,\R^e)$ with $N>p-1$, and let $y_0\in \R^e$. The Euler approximation of level $N$ of the RDE $$\dd y_t = f(y_t) \sdd\bx_t,\qquad y_0 = y_0$$ is given by $$y_T = y_0 + \sum_{k=1}^N f^{\circ k}(y_0) \pi_k(S_N(\bx)_{0,T}).$$ Here, $$f^{\circ 1} = f,\qquad\text{and}\qquad f^{\circ (k+1)} = D(f^{\circ k}) f.$$ This defines a 1-step scheme $A^{\textup{Euler}}$ given by $A^{\textup{Euler}}_N(f,y_0,S_N(\bx)_{0,T}) \coloneqq y_T.$

\begin{lemma}
The 1-step scheme $A^{\textup{Euler}}$ is an admissible 1-step scheme.
\end{lemma}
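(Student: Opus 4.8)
The goal is to verify the two conditions in the definition of an admissible 1-step scheme for $A^{\textup{Euler}}$. Condition 1 is essentially immediate: by construction $A^{\textup{Euler}}_N(f,y_0,\bg) = y_0 + \sum_{k=1}^N f^{\circ k}(y_0)\pi_k(\bg)$ takes a vector field $f$, a starting point $y_0\in\R^e$ and a group-like element $\bg\in G^N(\R^d)$ and returns a point in $\R^e$, so I would just note that the formula defines a map $\mathcal{V}^{N+,N}(\R^d,\R^e)\times\R^e\times G^N(\R^d)\to\R^e$ (the iterated derivatives $f^{\circ k}$ for $k\le N$ are well-defined since $f\in\mathcal{V}^{N+,N}$, hence $f\in L(\R^d,\lip^{N+\eps}_{\loc})$, so $f^{\circ k}$ exists and is continuous for $k=1,\dots,N$). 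That is one or two sentences.

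The substance is Condition 2: for $p<N$, $\bx\in C^{p\var,0}([0,T],\R^d)$ controlled by $\omega$, $f\in L(\R^d,\lip^{N+\eps}(\R^e,\R^e))$ bounded, and $y$ the unique solution of $\dd y_t = f(y_t)\sdd\bx_t$, $y_0=y_0$, we need
\[
\|y_T - A^{\textup{Euler}}_N(f,y_0,S_N(\bx)_{0,T})\| \le C\,\omega(0,T)^{\frac{N+1}{p}}
\]
with $C=C(N,p,d,e,\|f\|_{\lip^N},\omega(0,T))$ increasing in the last two arguments. This is precisely the single-step (one interval) version of the standard Euler/Davie estimate for RDEs driven by weakly geometric $p$-rough paths with bounded $\lip^{N+\eps}$ (indeed $\lip^\gamma$ with $\gamma>p$, and here $\gamma = N+\eps \ge p+\eps$ suffices after noting $N\ge\lfloor p\rfloor$) vector fields. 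The plan is simply to invoke \cite[Theorem 10.30]{friz2010multidimensional} (the ``Euler estimates for RDEs'' theorem in Friz--Victoir), applied on the single interval $[0,T]$, with the partition consisting of the single interval itself; this gives a local error bound of the form $C\,\omega(0,T)^{\theta}$ with $\theta = (N+1)/p > 1$, and with the constant depending on $N$, $p$, the dimensions, $\|f\|_{\lip^{N+\eps}}$ (equivalently controlled by $\|f\|_{\lip^N}$ up to the choice of $\eps$) and $\omega(0,T)$ in the stated monotone way. Note that $A^{\textup{Euler}}_N(f,y_0,S_N(\bx)_{0,T})$ is exactly the ``step-$N$ Euler scheme'' increment appearing there, since $S_N(\bx)_{0,T}$ is the (Lyons-lifted) signature and the formula $y_0 + \sum_{k=1}^N f^{\circ k}(y_0)\pi_k(S_N(\bx)_{0,T})$ matches their definition. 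A small point to spell out: the cited theorem is typically stated requiring $\lip^{\gamma}$ with $\gamma > p-1$ or $\gamma \ge [p]$ for the scheme to make sense and $\gamma$ large enough (here $N+\eps > p$) for the rate; since we assume $p < N$, i.e. $N \ge \lfloor p \rfloor + 1 \ge [p]$, all hypotheses of \cite[Theorem 10.30]{friz2010multidimensional} are met.

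I do not anticipate a genuine obstacle here — the lemma is essentially a bookkeeping statement that the Euler scheme as defined coincides with the classical one and that the classical local error estimate has the right form. The only mild care needed is (i) matching conventions: confirming that $f^{\circ k}$ as defined via $f^{\circ(k+1)} = D(f^{\circ k})f$ produces the same coefficients that multiply $\pi_k(S_N(\bx)_{0,T})$ in Friz--Victoir's Euler scheme, and (ii) tracking that the constant's dependence is increasing in $\|f\|_{\lip^N}$ and in $\omega(0,T)$, which is explicit in the cited bound (it is of the form $C(\theta)\,(\|f\|_{\lip^\gamma}\vee 1)^{\cdots}\,\exp(\cdots)$ times the relevant power of the control, all increasing in those arguments). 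So the proof is short: one sentence for Condition 1, then a reduction of Condition 2 to \cite[Theorem 10.30]{friz2010multidimensional} applied on a single interval, with a remark on why the hypotheses hold and the constant has the claimed monotonicity.
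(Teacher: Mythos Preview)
Your proposal is correct and takes exactly the same approach as the paper: the paper's proof is the single sentence ``This follows immediately from \cite[Theorem 10.30]{friz2010multidimensional},'' which is precisely the reduction you spell out for Condition~2. Your additional remarks on well-definedness of $f^{\circ k}$ (Condition~1) and on matching hypotheses and monotonicity of constants are all sound elaborations of what the paper leaves implicit.
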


\begin{proof}
This follows immediately from \cite[Theorem 10.30]{friz2010multidimensional}.
\end{proof}

We remark that $A^{\textup{Euler}}$ is neither local nor group-like.

Let $\log$ be the tensor algebra logarithm given by $\log_N:T_1^N(\R^d)\to T^N(\R^d),$ $$\log_N(\bm{g}) = \sum_{j=1}^N \frac{(-1)^{j+1}}{j}(\bm{g}-\bm{1})^{\otimes j}.$$

\begin{lemma}\label{lem:LogarithmIsLocallyLipschitz}
Let $\bm{g},\widetilde{\bm{g}}\in T_1^N(\R^d)$. Then, there exists a constant $C = C(N)$ such that $$\|\log_N \bm{g} - \log_N \widetilde{\bm{g}}\| \le C\left(1\lor\|\bg\|\lor\|\widetilde{\bg}\|\right)^{N-1}\|\bg-\widetilde{\bm{g}}\|.$$
\end{lemma}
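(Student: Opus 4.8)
The plan is to reduce everything to the elementary telescoping identity for tensor powers together with the levelwise submultiplicativity of admissible norms. First I would set $a \coloneqq \bg - \bm{1}$ and $b \coloneqq \widetilde{\bg} - \bm{1}$, viewed as elements of $T^N(\R^d)$ with vanishing $0$-th component. Since $\pi_i(a) = \pi_i(\bg)$ and $\pi_i(b) = \pi_i(\widetilde{\bg})$ for $i \ge 1$, while $\pi_0(a) = \pi_0(b) = \pi_0(a-b) = \pi_0(\bg - \widetilde{\bg}) = 0$, one has $\|a\| = \|\bg\|$, $\|b\| = \|\widetilde{\bg}\|$ and $\|a - b\| = \|\bg - \widetilde{\bg}\|$ for the inhomogeneous (max-over-levels) norm. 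Writing $M \coloneqq 1 \lor \|\bg\| \lor \|\widetilde{\bg}\|$, it then suffices to bound
$$
\|\log_N \bg - \log_N\widetilde{\bg}\| = \Bigl\| \sum_{j=1}^N \tfrac{(-1)^{j+1}}{j}\bigl(a^{\otimes j} - b^{\otimes j}\bigr)\Bigr\|
$$
by $C(N)\, M^{N-1}\|a-b\|$.

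Next I would record the telescoping identity
$$
a^{\otimes j} - b^{\otimes j} = \sum_{k=0}^{j-1} a^{\otimes k}\otimes (a-b)\otimes b^{\otimes(j-1-k)},
$$
which holds in the truncated tensor algebra because the right-hand side collapses to $a^{\otimes j}\otimes b^{\otimes 0} - a^{\otimes 0}\otimes b^{\otimes j}$. Projecting onto level $n$, each summand $a^{\otimes k}\otimes(a-b)\otimes b^{\otimes(j-1-k)}$ expands into a sum, over compositions of $n$ into exactly $j$ strictly positive parts, of homogeneous terms of the shape $\pi_{\alpha_1}(a)\otimes\cdots\otimes\pi_{\delta}(a-b)\otimes\cdots\otimes\pi_{\beta_{j-1-k}}(b)$. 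By part (2) of the definition of admissible norms, applied to these homogeneous tensors, each such term has norm at most $\|a\|^{k}\,\|a-b\|\,\|b\|^{j-1-k} \le M^{j-1}\|a-b\|$; since the number of compositions of $n$ into $j$ positive parts is $\binom{n-1}{j-1} \le 2^{n-1} \le 2^{N-1}$ (recall $\pi_n$ vanishes for $n > N$), this gives $\|\pi_n(a^{\otimes j} - b^{\otimes j})\| \le 2^{N-1} M^{j-1}\|a-b\|$ for every $n$, hence $\|a^{\otimes j} - b^{\otimes j}\| \le 2^{N-1} M^{j-1}\|a-b\|$.

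Finally, using the triangle inequality for the max-over-levels norm,
$$
\|\log_N\bg - \log_N\widetilde{\bg}\| \le \sum_{j=1}^N \tfrac1j \|a^{\otimes j} - b^{\otimes j}\| \le 2^{N-1}\|a-b\| \sum_{j=1}^N M^{j-1} \le N\,2^{N-1}\, M^{N-1}\|a-b\|,
$$
which is the claim with $C(N) = N\,2^{N-1}$, after substituting back $\|a-b\| = \|\bg-\widetilde{\bg}\|$ and $M = 1\lor\|\bg\|\lor\|\widetilde{\bg}\|$. The only mildly delicate point is that the inhomogeneous norm is submultiplicative only levelwise, not globally, which is exactly why the combinatorial factor $\binom{n-1}{j-1}$ (harmlessly absorbed into $C(N)$) appears; everything else is bookkeeping, and no appeal to finite-dimensionality or equivalence of norms is needed.
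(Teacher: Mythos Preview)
Your proof is correct and follows essentially the same route as the paper's: both telescope $a^{\otimes j}-b^{\otimes j}$ and bound the resulting homogeneous pieces levelwise via submultiplicativity of admissible norms, then sum over $j$. One minor accounting slip: the intermediate claim $\|\pi_n(a^{\otimes j}-b^{\otimes j})\|\le 2^{N-1}M^{j-1}\|a-b\|$ is missing a factor of $j$ coming from the sum over the telescope index $k=0,\ldots,j-1$; this factor is harmlessly cancelled by the $1/j$ from the log series, so your stated constant $N\,2^{N-1}$ is in fact correct.
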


\begin{proof}
Let $k\in \{1,\dots,N\}.$ Then, 
\begin{align*}
\|\pi_k(\log_N &\bm{g} - \log_N \widetilde{\bm{g}})\|\\
&\le \sum_{j=1}^N \frac{1}{j} \left\|\pi_k\left((\bg - \bm{1})^{\otimes j}\right) - \pi_k\left((\widetilde{\bg} - \bm{1})^{\otimes j}\right)\right\|\\
&\le \sum_{j=1}^N \frac{1}{j}\sum_{\substack{i_1,\dots,i_j = 1,\\ i_1 + \dots + i_j = k}}^k \|g_{i_1}\otimes\dots\otimes g_{i_j} - \widetilde{g}_{i_1}\otimes\dots\otimes \widetilde{g}_{i_j}\|\\
&\le \sum_{j=1}^N \frac{1}{j}\sum_{\substack{i_1,\dots,i_j = 1,\\ i_1 + \dots + i_j = k}}^k\sum_{\ell=1}^j \|g_{i_1}\otimes\dots\otimes g_{i_{\ell-1}}\otimes (g_{i_\ell} - \widetilde{g}_{i_\ell}) \otimes \widetilde{g}_{i_{\ell+1}}\otimes\dots \otimes \widetilde{g}_{i_j}\|\\
&\le \sum_{j=1}^N \frac{1}{j}\sum_{\substack{i_1,\dots,i_j = 1,\\ i_1 + \dots + i_j = k}}^k\sum_{\ell=1}^j \|\bg\|^{\ell-1} \|\bg - \widetilde{\bg}\| \|\widetilde{\bg}\|^{j-\ell}\\
&\le C(N,k) (1\lor\|\bg\|\lor\|\widetilde{\bg}\|)^{N-1}\|\bg-\widetilde{\bg}\|.
\end{align*}
Taking the supremum over $k$ finishes the proof.
\end{proof}

\begin{lemma}\label{lem:EulerIsLipschitz}
Let $\bg^1,\bg^2\in G^N(\R^d)$, let $f\in\mathcal{V}^N(\R^d,\R^e)$, and let $y\in \R^e$. Then, $$\|A^{\textup{Euler}}_N(f,y,\bg^1) - A^{\textup{Euler}}_N(f,y,\bg^2)\| \le C\|\bg^1-\bg^2\|,$$ where $C = C(N,\|f\|_{\lip^N})$ is increasing in both parameters.
\end{lemma}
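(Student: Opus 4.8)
The plan is to exploit that the Euler scheme is affine in its signature argument. By the definition of $A^{\textup{Euler}}$ recalled above,
$$A^{\textup{Euler}}_N(f,y,\bg^i) = y + \sum_{k=1}^N f^{\circ k}(y)\,\pi_k(\bg^i),\qquad i=1,2,$$
so that, subtracting, the base point $y$ cancels and
$$A^{\textup{Euler}}_N(f,y,\bg^1) - A^{\textup{Euler}}_N(f,y,\bg^2) = \sum_{k=1}^N f^{\circ k}(y)\left(\pi_k(\bg^1) - \pi_k(\bg^2)\right),$$
where $f^{\circ k}(y)$ is regarded as a linear map $(\R^d)^{\otimes k}\to\R^e$ and acts on $\pi_k(\bg^i)\in(\R^d)^{\otimes k}$. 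Taking norms, bounding this contraction by $\|f^{\circ k}(y)\|\,\|\pi_k(\bg^1)-\pi_k(\bg^2)\|$, and using $\|\pi_k(\bg^1)-\pi_k(\bg^2)\| = \|\pi_k(\bg^1-\bg^2)\| \le \|\bg^1-\bg^2\|$ (immediate from the definition of the inhomogeneous norm on $T_1^N(\R^d)$, since on each level $k\ge 1$ the element $\bg^1-\bg^2$ is just the componentwise difference), the whole statement reduces to a uniform bound on $\|f^{\circ k}(y)\|$ for $k=1,\dots,N$.

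That bound is the only slightly technical ingredient. From $f^{\circ 1}=f$ and $f^{\circ(k+1)}=D(f^{\circ k})f$, an induction using the Leibniz rule shows that $f^{\circ k}$ is a fixed universal polynomial in the derivatives $D^j f$ with $0\le j\le k-1$, having exactly $k$ factors. Since $k\le N$ and $f\in\mathcal{V}^N(\R^d,\R^e)$, all the derivatives $D^j f$, $j\le N-1 = \lfloor N\rfloor$, occurring here exist and are controlled by $\|f\|_{\lip^N}$ (and $f^{\circ k}(y)$ depends on $f$ only near $y$, so it suffices to use the $\lip^N$-norm on a neighbourhood of $y$, which is what matters when this lemma is later applied to bounded $\lip^N$ vector fields). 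Counting the monomials yields a combinatorial constant $C_k=C_k(N)$ with $\|f^{\circ k}(y)\| \le C_k(N)\,(1\vee\|f\|_{\lip^N})^k$ for all $y$.

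Combining the two steps gives
$$\|A^{\textup{Euler}}_N(f,y,\bg^1) - A^{\textup{Euler}}_N(f,y,\bg^2)\| \le \left(\sum_{k=1}^N C_k(N)\,(1\vee\|f\|_{\lip^N})^k\right)\|\bg^1-\bg^2\|,$$
and the bracketed quantity, which we take for $C=C(N,\|f\|_{\lip^N})$, is manifestly nondecreasing in both $N$ and $\|f\|_{\lip^N}$, as required. I expect no genuine obstacle here: the estimate follows at once from the affine structure of the Euler step, and the bound on the iterated vector field derivatives is exactly the standard estimate already underlying the error bound \cite[Theorem 10.30]{friz2010multidimensional} for $A^{\textup{Euler}}$; the argument moreover goes through verbatim for arbitrary $\bg^1,\bg^2\in T_1^N(\R^d)$, not just for elements of $G^N(\R^d)$.
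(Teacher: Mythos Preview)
Your proof is correct and follows essentially the same approach as the paper: both exploit that $A^{\textup{Euler}}_N(f,y,\cdot)$ is affine in the signature argument, take the difference, and then bound $\|f^{\circ k}(y)\|$ by a polynomial in $\|f\|_{\lip^N}$ (the paper writes the final constant as $C(\|f\|_{\lip^N}\lor\|f\|_{\lip^N}^N)$, which is exactly your $\sum_k C_k(N)(1\vee\|f\|_{\lip^N})^k$ up to the choice of constant). Your version simply spells out the inductive structure of $f^{\circ k}$ in slightly more detail.
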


\begin{proof}
This is immediate from the fact that $f^{\circ k}$ is a linear map on $(\R^d)^{\otimes k}$ with
\begin{align*}
\|A^{\textup{Euler}}_N(f,y,\bg^1) - A^{\textup{Euler}}_N(f,y,\bg^2)\| &= \left\|\sum_{k=1}^N f^{\circ k}(y_0) \pi_k(\bg^1-\bg^2)\right\|\\
&\le C(\|f\|_{\lip^N}\lor\|f\|_{\lip^N}^N)\|\bg^1-\bg^2\|.\qedhere
\end{align*}
\end{proof}

\printbibliography


\end{document}